\documentclass[11pt]{article}
\usepackage[a4paper,margin=1in]{geometry}
\usepackage[T1]{fontenc}
\usepackage{amsmath,amssymb,amsthm,mathtools,mathrsfs,bm}
\usepackage{bbm}
\usepackage{enumitem}
\usepackage{xcolor}
\usepackage{microtype}
\usepackage[round,authoryear]{natbib}
\usepackage{hyperref}
\usepackage{graphicx}
\usepackage{booktabs}
\usepackage{float}
\usepackage[section]{placeins}
\usepackage{cleveref}
\hypersetup{hidelinks}
\allowdisplaybreaks
\newtheorem{theorem}{Theorem}
\newtheorem{proposition}[theorem]{Proposition}
\newtheorem{lemma}[theorem]{Lemma}
\newtheorem{corollary}[theorem]{Corollary}
\theoremstyle{definition}

\newtheorem{remark}[theorem]{Remark}

\newcommand{\R}{\mathbb{R}}
\newcommand{\E}{\mathbb{E}}
\newcommand{\Var}{\mathrm{Var}}

\newcommand{\tr}{\mathrm{tr}}
\newcommand{\op}{\mathrm{op}}
\newcommand{\F}{\mathrm{F}}
\newcommand{\dd}{\mathrm{d}}
\newcommand{\dto}{\xrightarrow[]{d}}
\newcommand{\norm}[1]{\left\lVert #1 \right\rVert}
\newcommand{\abs}[1]{\left| #1 \right|}
\newcommand{\Ln}{L_n}

\newcommand{\Toe}[1]{\mathcal{T}_n(#1)}
\newcommand{\Cmn}{C_n^{\mathrm m}}
\newcommand{\Dmn}{D_n^{\mathrm m}}
\newcommand{\Con}{C_n^{\mathrm o}}
\newcommand{\Don}{D_n^{\mathrm o}}
\newcommand{\Aon}{A_n^{\mathrm o}}
\newcommand{\eul}{\gamma_{\mathrm E}}
\newcommand{\digammaf}{\psi}

\title{\textsc{Boundary Inference for Mixed Fractional Models under High-Frequency Observation}\\
Critical LAN and Score Tests at $H=3/4$}
\author{
Cai Chunhao\\
Department of Mathematics, Sun Yat Sen University\\
\texttt{caichh9@mail.sysu.edu.cn}
\and
Shang Yiwu\\
School of Mathematical Sciences, Nankai University\\
\texttt{shanyw@mail.nankai.edu.cn}
\and
Xiao Weilin\\
Department of Mathematics, Zhejiang University\\
\texttt{wlxiao@zju.edu.cn}
\and
Zhang Cong\\
Department of Mathematics, Zhejiang University\\
\texttt{zcong@zju.edu.cn}
}
\date{}

\begin{document}
\maketitle

\begin{abstract}
 We study boundary inference at $H=3/4$ for mixed fractional Brownian motion and mixed fractional Ornstein--Uhlenbeck models under high-frequency observation. This boundary is economically important because it separates the critical and supercritical regimes of mixed fractional dynamics.

 We make three contributions. First, we identify the exact critical first-order scaling and show that, after removing the explicit linear component in the $H$-score, the transformed $(\sigma,H)$ block is already non-degenerate. Second, we establish critical score central limit theorems (CLT) and derive local asymptotic normality (LAN) with fully explicit leading information constants for both models. Third, we construct boundary-calibrated one-sided score tests for detecting entry into the supercritical region $H>3/4$ and discuss feasible implementation through restricted nuisance estimation. Monte Carlo evidence shows that the feasible statistic has the correct directional power but conservative null calibration. Finally, an intraday illustration on one-minute SPY data finds no persistent evidence in favor of $H>3/4$.
\end{abstract}

\medskip
\noindent\textbf{Keywords:} mixed fractional Brownian motion; mixed fractional Ornstein--Uhlenbeck process; boundary inference; local asymptotic normality; score test; high-frequency data.

\smallskip
\noindent\textbf{JEL classification:} C12; C22; C58.

\section{Introduction}
Mixed fractional models sit naturally at the intersection of continuous-time probability and financial econometrics. On the one hand, they enrich Brownian dynamics by adding a fractional component capable of carrying persistent dependence; on the other hand, they remain close enough to Gaussian likelihood theory to support high-frequency inference. In empirical work, such models are appealing whenever one wants to describe serially dependent microstructure distortions, persistent volatility signals, or reduced-form deviations from the benchmark semimartingale paradigm. For mixed fractional Brownian motion, however, the semimartingale interpretation is known to hinge on the critical threshold $H=3/4$ in \citet{Cheridito2001}. The point $H=3/4$ is therefore not merely a mathematical edge case. It is the place where the probabilistic status of the model, and hence its econometric interpretation, changes in a decisive way.

That question has become more rather than less relevant in recent years. The modern rough-volatility literature has made fractional behavior a central empirical theme in finance, with influential contributions in \emph{Quantitative Finance} and related outlets emphasizing rough volatility surfaces, short-maturity asymptotics, and the high-frequency evidence for very small Hurst exponents, see, e.g., \citet{BayerFrizGatheral2016}, \citet{Fukasawa2017}, \citet{GatheralJaissonRosenbaum2018}, and \citet{Fukasawa2021}. At the same time, the \emph{Journal of Econometrics} has developed a complementary line of work that treats fractional and rough dynamics as econometric objects to be estimated, tested, and used in forecasting. Examples include the fOU realized-volatility framework of \citet{WangXiaoYu2023}, the GMM-based roughness estimation strategy of \citet{BolkoEtAl2023}, and the approximate Whittle likelihood analysis of \citet{ShiYuZhang2024}. Much of this literature focuses, understandably, on rough regimes with $H<1/2$ or on estimation away from critical boundaries. Yet if mixed fractional models are to be used as diagnostic or structural tools, then the opposite side of the problem matters as well: one needs a statistically disciplined way to decide whether the data are still compatible with the critical or subcritical side of the model, or whether they have crossed into the supercritical region where the semimartingale logic becomes substantially more delicate.

This is precisely where the present paper enters. Existing LAN results for high-frequency fractional models do not resolve the boundary point $H=3/4$. For fractional Gaussian noise, \citet{BrousteFukasawa2018} established LAN under high-frequency observation. For mixed fractional Brownian motion and mixed fractional Ornstein--Uhlenbeck processes, \citet{Cai2026} and \citet{Cai2026mfou} showed that the local geometry differs sharply across the subcritical and supercritical regions. But the critical case itself is excluded from those analyses. At $H=3/4$, both off-boundary normalizations fail. The leading traces acquire an additional logarithmic factor, the raw $H$-score develops an explicit singular coupling with the nuisance direction, and neither side of the existing theory can simply be pasted onto the boundary.

Our main observation is that the critical point is singular only before the right coordinates are chosen. Once the explicit linear term in the $H$-score is removed, the remaining $(\sigma,H)$ block is already non-degenerate at $H=3/4$. This fact turns out to be enough to recover an exact critical normalization, to write down the limiting covariance structure with explicit constants, and to obtain a single triangular local reparametrization for both mixed fractional experiments. In that sense, the paper does not merely fill in a missing boundary case. It shows that the boundary has its own coherent local asymptotic geometry, and that this geometry leads directly to a usable score-based inferential problem.

The testing question that emerges is economically and econometrically transparent. We study
\[
H_0:\ H\le \frac34
\qquad\text{versus}\qquad
H_1:\ H>\frac34,
\]
so the null collects the critical and subcritical configurations while the alternative corresponds to entry into the supercritical regime. When mixed fractional models are interpreted as reduced-form descriptions of persistent high-frequency dependence, rejection in favor of $H>3/4$ can be read as evidence of an exceptionally persistent regime, one that is harder to reconcile with the semimartingale benchmark underlying much of continuous-time financial econometrics. Conversely, non-rejection supports staying on the critical side of the boundary, where the local asymptotic structure is fundamentally different. This is the sense in which our score tests should be understood: they are not only formal consequences of LAN expansions, but boundary diagnostics for a class of models that sits directly beside the rough-volatility and high-frequency econometrics literatures.

The contribution of the paper is therefore theoretical in method but econometric in orientation. We identify the exact critical first-order scales, prove Gaussian limits for transformed score vectors in both the mfBm and mfOU experiments, derive explicit LAN expansions with fully specified leading constants, and use the resulting central sequences to construct one-sided score tests calibrated at the boundary. We also discuss feasible implementation through nuisance estimation under the restriction $H=3/4$, report an expanded Monte Carlo analysis of the feasible mfBm statistic, and close with a compact intraday empirical illustration built around that same boundary score.

The rest of the paper is organized as follows. Section~\ref{sec:preliminaries} introduces the mfBm and mfOU models, derives the critical low-frequency constants and covariance matrices at $H=3/4$, and establishes the exact score decompositions that drive the subsequent analysis. Section~\ref{sec:main-results} states the transformed score CLTs, the critical LAN expansions. Sections~\ref{sec:proofofcltscore} and~\ref{sec:proofofLANTHM} contain the technical proofs. Section~\ref{sec:score-tests} develops the boundary score tests and explains feasible implementation under nuisance estimation. Section~\ref{sec:simulation} reports the expanded Monte Carlo evidence. Section~\ref{sec:empirical} turns to empirical implementation of the exact feasible score on genuinely intraday data. The appendices collect the low-frequency reductions, explicit constants and higher-order remainder bounds.

\section{Model setup, critical objects and score decomposition}\label{sec:preliminaries}
We now introduce the objects that will carry the boundary analysis. Rather than introducing the models and the asymptotic ingredients in isolation, it is helpful to keep the inferential goal in view from the outset. Everything that follows is organized around one question: what remains of the Gaussian score structure exactly at $H=3/4$, once the usual off-boundary normalizations have ceased to be informative? For that reason, the section moves from the model definitions to the low-frequency behavior that drives the critical rates, and then from those rates to the score decomposition that eventually makes the boundary experiment tractable.
\subsection{The mfBm model}
Let
\[
Y_t=\sigma B_t^H+W_t,
\qquad t\ge0,
\]
where $B_t^H$ is a fractional Brownian motion, $W_t$ is an independent standard Brownian motion. The parameters $(\sigma,H)$ are assumed known. We observe the increment vector
\[
X_n^{\mathrm m}:=(\Delta_1^nY,\dots,\Delta_n^nY)^\top,
\qquad
\Delta_k^nY:=Y_{k\Delta_n}-Y_{(k-1)\Delta_n},
\]
where the superscript $\mathrm m$ denotes that the observations are from the mfBm model. Its covariance matrix is
\[
\Sigma_n^{\mathrm m}(\sigma,H)
=\Delta_n\Bigl(I_n+\sigma^2\Delta_n^{2H-1}\Toe{f_H}\Bigr),
\]
where $f_H$ is the spectral density of fractional Gaussian noise. Under the Fourier convention fixed in notation, one may take
\begin{equation}\label{eq:fgn-sd-exact}
f_H(\lambda)
=
2\,\Gamma(2H+1)\sin(\pi H)\,(1-\cos\lambda)\sum_{k\in\mathbb Z}\abs{\lambda+2\pi k}^{-(2H+1)},
\qquad \lambda\in[-\pi,\pi]\setminus\{0\}.
\end{equation}
\begin{proposition}
\label{prop:Km-from-sd}
Let
\[
c_H:=\Gamma(2H+1)\sin(\pi H).
\]
Then, as $\lambda\to0$,
\[
f_H(\lambda)=c_H\,|\lambda|^{1-2H}\bigl(1+o(1)\bigr)
\]
and
\[
\partial_H f_H(\lambda)
=
f_H(\lambda)\Bigl(2\psi(2H+1)+\pi\cot(\pi H)-2\log|\lambda|+o(1)\Bigr),
\]
where $\psi$ denotes the digamma function.
In particular, at $H=3/4$,
\[
K_{\mathrm m}:=c_{3/4},
\qquad
\beta_{\mathrm m}:=2\psi\!\left(\frac52\right)-\pi,
\]
and $\beta_{\mathrm m}$ is the derivative of $\log c_H$ at $H=3/4$.
\end{proposition}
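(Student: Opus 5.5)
Our approach is to split the series in \eqref{eq:fgn-sd-exact} into the $k=0$ term and the tail $R_H(\lambda):=\sum_{k\neq0}\abs{\lambda+2\pi k}^{-(2H+1)}$. For $H\in(0,1)$ the exponent $2H+1>1$, so $R_H$ converges uniformly on $[-\pi,\pi]$. It is bounded and smooth in $(\lambda,H)$ near $\lambda=0$, and so is $\partial_H R_H(\lambda)=-2\sum_{k\neq0}\log\abs{\lambda+2\pi k}\,\abs{\lambda+2\pi k}^{-(2H+1)}$, since the logarithm is dominated by any small power. Writing $1-\cos\lambda=\tfrac{\lambda^2}{2}(1+O(\lambda^2))$, we get
\[
f_H(\lambda)=2c_H(1-\cos\lambda)\bigl(\abs{\lambda}^{-(2H+1)}+R_H(\lambda)\bigr)
=c_H\abs{\lambda}^{1-2H}\bigl(1+O(\lambda^2)+O(\abs{\lambda}^{2H+1})\bigr).
\]
This gives the first claim, with an explicit rate.

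For the derivative, we differentiate term by term. This is justified because the $H$-derivatives of the summands are dominated uniformly for $H$ in a compact subinterval of $(0,1)$. We obtain
\[
\partial_H f_H=f_H\,\partial_H\log c_H+2c_H(1-\cos\lambda)\bigl(-2\log\abs{\lambda}\,\abs{\lambda}^{-(2H+1)}+\partial_H R_H(\lambda)\bigr).
\]
The bracket equals $\abs{\lambda}^{-(2H+1)}\bigl(-2\log\abs{\lambda}+O(\abs{\lambda}^{2H+1})\bigr)$. Dividing by $f_H$, whose ratio to $2c_H(1-\cos\lambda)\abs{\lambda}^{-(2H+1)}$ tends to $1$, gives
\[
\frac{\partial_H f_H}{f_H}=\partial_H\log c_H-2\log\abs{\lambda}\,(1+O(\abs{\lambda}^{2H+1}))+O(\abs{\lambda}^{2H+1}).
\]
The product $\log\abs{\lambda}\cdot\abs{\lambda}^{2H+1}$ tends to $0$, so every remainder is $o(1)$.

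Next we compute $\partial_H\log c_H=\partial_H\bigl[\log\Gamma(2H+1)+\log\sin(\pi H)\bigr]=2\psi(2H+1)+\pi\cot(\pi H)$, which is the stated expansion. At $H=3/4$ we have $\cot(3\pi/4)=-1$, so $\partial_H\log c_H\big|_{H=3/4}=2\psi(5/2)-\pi=\beta_{\mathrm m}$. Also $K_{\mathrm m}=c_{3/4}$ by definition.

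The only delicate step is interchanging $\partial_H$ with the infinite sum and checking that the tail's contribution, after division by $f_H$, is $o(1)$ rather than merely bounded. This reduces to noting that $R_H$ and $\partial_H R_H$ are bounded near $\lambda=0$, while the leading term grows like $\abs{\lambda}^{-(2H+1)}$. A Weierstrass M-test on $\sum_{k\neq0}(1+\log\abs{2\pi k})\abs{\pi(2\abs{k}-1)}^{-(2H_0+1)}$, with $H_0$ the lower end of a compact $H$-interval, settles both the interchange and the boundedness.
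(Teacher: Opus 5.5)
Your proposal is correct and follows the same route as the paper: isolate the $k=0$ term of \eqref{eq:fgn-sd-exact}, treat the $k\neq0$ tail as a bounded perturbation, differentiate the series in $H$, and compute $\partial_H\log c_H=2\psi(2H+1)+\pi\cot(\pi H)$, which gives $\beta_{\mathrm m}=2\psi(5/2)-\pi$ at $H=3/4$. Your version is more careful than the paper's in two places: the Weierstrass M-test justifies the term-by-term differentiation, and the explicit relative error $O(\abs{\lambda}^{2H+1})$ guarantees that $\log\abs{\lambda}$ times that error still tends to zero, which the paper's bare $o(1)$ leaves implicit.
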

\begin{proof}
Starting from \eqref{eq:fgn-sd-exact}, the term $k=0$ dominates as $\lambda\to0$, while the terms $k\neq0$ are $O(1)$. Since
\[
1-\cos\lambda=\frac{\lambda^2}{2}+O(\lambda^4),
\]
we obtain
\[
f_H(\lambda)
=
2c_H\left(\frac{\lambda^2}{2}+O(\lambda^4)\right)|\lambda|^{-(2H+1)}\bigl(1+o(1)\bigr)
=
c_H|\lambda|^{1-2H}\bigl(1+o(1)\bigr).
\]
Differentiating exact series representation \eqref{eq:fgn-sd-exact} with respect to $H$ gives
\[
\partial_H\log f_H(\lambda)
=
\frac{\dd}{\dd H}\log c_H-2\log|\lambda|+o(1)
=
2\digammaf(2H+1)+\pi\cot(\pi H)-2\log|\lambda|+o(1),
\]
which proves the proposition.
\end{proof}
 Proposition \ref{prop:Km-from-sd} identifies the low frequency behavior of the fGn spectral density and its derivative in the Hurst direction.  In particular, the constants $K_m$ and $\beta_m$ extracted here will enter the critical score profiles, the covariance matrix, and the subsequent LAN analysis.

At the critical point $H=3/4$ the standard fGn low frequency coefficient is
\begin{equation}\label{eq:Km}
K_{\mathrm m}
:=\Gamma\!\left(\frac52\right)\sin\!\left(\frac{3\pi}{4}\right)
=\frac{3\sqrt{2\pi}}{8},
\end{equation}
\begin{equation}\label{eq:betam}
    \beta_{\mathrm m}
:=\left.\frac{\dd}{\dd H}\log\!\bigl(\Gamma(2H+1)\sin(\pi H)\bigr)\right|_{H=3/4}
=2\digammaf\!\left(\frac52\right)-\pi
=\frac{16}{3}-2\eul-4\log 2-\pi.
\end{equation}
Thus,
\begin{equation}\label{eq:mfbm-lowfreq}
 f_{3/4}(\lambda)=K_{\mathrm m}\abs{\lambda}^{-1/2}\bigl(1+r_{\mathrm m}(\lambda)\bigr),
 \qquad
 \partial_H f_H\big|_{H=3/4}(\lambda)
 =f_{3/4}(\lambda)\bigl(\beta_{\mathrm m}-2\log\abs{\lambda}+\rho_{\mathrm m}(\lambda)\bigr),
\end{equation}
with $r_{\mathrm m}(\lambda)\to0$ and $\rho_{\mathrm m}(\lambda)\to0$ as $\lambda\to0$. \\
Fix a critical parameter point
\[
\vartheta_{\mathrm m}:=(\sigma,3/4),\qquad \sigma>0.
\]
All mfBm score matrices below are evaluated at $\vartheta_{\mathrm m}$.

\subsection{The mfOU model}
Let $X$ be the stationary solution of
\[
\dd X_t=-\alpha X_t\,\dd t+\dd M_t^H,
\qquad
M_t^H=\sigma B_t^H+W_t,
\qquad \alpha>0.
\]
We observe
\[
X_n^{\mathrm o}:=(X_{\Delta_n},\dots,X_{n\Delta_n})^\top.
\]
Let $f_{\Delta}^{\mathrm o}(\lambda;\sigma,H,\alpha)$ be the sampled spectral density of the stationary mfOU model (see \cite{Cai2026mfou}). At the continuous time frequencies, the stationary mfOU solution has a smooth OU transfer factor multiplying the same fractional singular coefficient $c_H=\Gamma(2H+1)\sin(\pi H)$ as in \eqref{eq:fgn-sd-exact}. Consequently, the sampled spectral density inherits the same low frequency singular coefficient and logarithmic derivative in the $H$-direction. More precisely, under the normalization used throughout the paper,
\[
f_{\Delta}^{\mathrm o}(\lambda;\sigma,H,\alpha)
=
c_H\,|\lambda|^{1-2H}\bigl(1+o(1)\bigr)
\qquad (\lambda\to0),
\]
and
\[
\partial_H f_{\Delta}^{\mathrm o}(\lambda;\sigma,H,\alpha)
=
f_{\Delta}^{\mathrm o}(\lambda;\sigma,H,\alpha)
\Bigl(2\digammaf(2H+1)+\pi\cot(\pi H)-2\log|\lambda|+o(1)\Bigr).
\]
Therefore the relevant critical low frequency coefficients are
\begin{equation}\label{eq:kappa-o}
K_{\mathrm o}:=\Gamma\!\left(\frac52\right)\sin\!\left(\frac{3\pi}{4}\right)=\frac{3\sqrt{2\pi}}{8}=K_{\mathrm m},
\end{equation}
and
\begin{equation}\label{eq:beta-o}
\beta_{\mathrm o}:=\left.\frac{\dd}{\dd H}\log\!\bigl(\Gamma(2H+1)\sin(\pi H)\bigr)\right|_{H=3/4}
=2\digammaf\!\left(\frac52\right)-\pi
=\frac{16}{3}-2\eul-4\log 2-\pi
=\beta_{\mathrm m}.
\end{equation}
Only the induced critical score profiles are used below, so no additional second-order expansion of $f_{\Delta}^{\mathrm o}$ is required in this paper.

Fix $\vartheta_{\mathrm o}:=(\sigma,3/4,\alpha), \sigma>0, \alpha>0$, all mfOU score matrices below are evaluated at $\vartheta_{\mathrm o}$.

\subsection{Critical covariance constants}
 This subsection collects the critical covariance constants that determine the covariance matrices in the score CLTs at $H=3/4$. These quantities also enter the information matrices appearing in the subsequent LAN expansions and score tests. Define
\begin{align}
\Xi_{\sigma\sigma}^{\mathrm m}&:=\frac{4\sigma^2K_{\mathrm m}^2}{\pi}=\frac{9\sigma^2}{8}, &
\Xi_{\sigma H}^{\mathrm m}&:=\frac{2\sigma^3K_{\mathrm m}^2}{\pi}=\frac{9\sigma^3}{16}, &
\Xi_{HH}^{\mathrm m}&:=\frac{4\sigma^4K_{\mathrm m}^2}{3\pi}=\frac{3\sigma^4}{8}, \label{eq:xi-m-main}\\
\Xi_{\sigma\sigma}^{\mathrm o}&:=\frac{4\sigma^2K_{\mathrm o}^2}{\pi}=\frac{9\sigma^2}{8}, &
\Xi_{\sigma H}^{\mathrm o}&:=\frac{2\sigma^3K_{\mathrm o}^2}{\pi}=\frac{9\sigma^3}{16}, &
\Xi_{HH}^{\mathrm o}&:=\frac{4\sigma^4K_{\mathrm o}^2}{3\pi}=\frac{3\sigma^4}{8}, &
\Xi_{\alpha\alpha}^{\mathrm o}&:=\frac{1}{\alpha}. \label{eq:xi-o-main}
\end{align}
Set
\[
\Gamma_{ab}^{\mathrm m}:=\frac12\Xi_{ab}^{\mathrm m},
\qquad
\Gamma_{ab}^{\mathrm o}:=\frac12\Xi_{ab}^{\mathrm o}.
\]
The critical covariance matrices of mfBm and mfOU are
\begin{equation}\label{eq:gamma-m-main}
\Gamma_{\mathrm m}^{\mathrm{crit}}
:=
\begin{pmatrix}
\Gamma_{\sigma\sigma}^{\mathrm m} & \Gamma_{\sigma H}^{\mathrm m}\\
\Gamma_{\sigma H}^{\mathrm m} & \Gamma_{HH}^{\mathrm m}
\end{pmatrix}
=
\begin{pmatrix}
\dfrac{9\sigma^2}{16} & \dfrac{9\sigma^3}{32}\\[1ex]
\dfrac{9\sigma^3}{32} & \dfrac{3\sigma^4}{16}
\end{pmatrix},
\end{equation}
and
\begin{equation}\label{eq:gamma-o-main}
\Gamma_{\mathrm o}^{\mathrm{crit}}
:=
\begin{pmatrix}
\Gamma_{\sigma\sigma}^{\mathrm o} & \Gamma_{\sigma H}^{\mathrm o} & 0\\
\Gamma_{\sigma H}^{\mathrm o} & \Gamma_{HH}^{\mathrm o} & 0\\
0 & 0 & \Gamma_{\alpha\alpha}^{\mathrm o}
\end{pmatrix}
=
\begin{pmatrix}
\dfrac{9\sigma^2}{16} & \dfrac{9\sigma^3}{32} & 0\\[1ex]
\dfrac{9\sigma^3}{32} & \dfrac{3\sigma^4}{16} & 0\\[1ex]
0 & 0 & \dfrac{1}{2\alpha}
\end{pmatrix}.
\end{equation}
In both models,
\begin{equation}\label{eq:critical-corr}
\frac{\Gamma_{\sigma H}^{\bullet}}{\sqrt{\Gamma_{\sigma\sigma}^{\bullet}\Gamma_{HH}^{\bullet}}}
=\frac{\sqrt3}{2}<1,
\qquad \bullet\in\{\mathrm m,\mathrm o\}.
\end{equation}
Hence the critical $(\sigma,H)$-block is already non-degenerate after the first transformation. Its connection with the transformed score will be made explicit in the next subsection. The low frequency reductions and the explicit one-dimensional integrals leading to \eqref{eq:xi-m-main}--\eqref{eq:xi-o-main} are collected in Appendix~\ref{app:profiles}.

\begin{corollary}\label{cor:nondeg}
The matrix $\Gamma_{\mathrm m}^{\mathrm{crit}}$ is positive definite, and so is the $2\times2$ upper-left block of $\Gamma_{\mathrm o}^{\mathrm{crit}}$.
\end{corollary}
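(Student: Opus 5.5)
The plan is to verify positive definiteness of the symmetric $2\times2$ matrix $\Gamma_{\mathrm m}^{\mathrm{crit}}$ in \eqref{eq:gamma-m-main} directly by Sylvester's criterion. Since the upper-left $2\times2$ block of $\Gamma_{\mathrm o}^{\mathrm{crit}}$ in \eqref{eq:gamma-o-main} has literally the same entries (because $K_{\mathrm o}=K_{\mathrm m}$ by \eqref{eq:kappa-o}), the second assertion will follow verbatim from the first.

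The first step is to check that the leading diagonal entry is positive: $\Gamma_{\sigma\sigma}^{\mathrm m}=9\sigma^2/16>0$ because $\sigma>0$ at the critical point $\vartheta_{\mathrm m}$. The second step is to compute the determinant:
\[
\det\Gamma_{\mathrm m}^{\mathrm{crit}}
=\frac{9\sigma^2}{16}\cdot\frac{3\sigma^4}{16}-\Bigl(\frac{9\sigma^3}{32}\Bigr)^2
=\frac{27\sigma^6}{256}-\frac{81\sigma^6}{1024}
=\frac{27\sigma^6}{1024}>0 .
\]
Equivalently, this is the statement \eqref{eq:critical-corr} that the correlation coefficient equals $\sqrt3/2<1$, since $\det=\Gamma_{\sigma\sigma}\Gamma_{HH}(1-\tfrac34)$. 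One can also see it symbolically from \eqref{eq:xi-m-main}: in units of $K_{\mathrm m}^2/\pi$, the entries are $2\sigma^2$, $\sigma^3$ and $\tfrac23\sigma^4$, so the determinant is $(\tfrac43-1)\sigma^6(K_{\mathrm m}^2/\pi)^2>0$. This form is independent of the numerical value of $K_{\mathrm m}$ and so also covers the mfOU case.

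Positive diagonal entry plus positive determinant then gives positive definiteness. The only potential obstacle is arithmetic consistency between the symbolic constants \eqref{eq:xi-m-main} and their numerical forms. This is settled by using $K_{\mathrm m}^2=9\pi/32$ from \eqref{eq:Km}, and in any case the symbolic computation above avoids that dependence.
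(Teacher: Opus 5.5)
Your proof is correct and follows essentially the same route as the paper, which also deduces $\det\Gamma_{\mathrm m}^{\mathrm{crit}}>0$ from the correlation identity $\Gamma_{\sigma H}/\sqrt{\Gamma_{\sigma\sigma}\Gamma_{HH}}=\sqrt3/2<1$ and uses the identical entries to cover the mfOU block. Your explicit check that $\Gamma_{\sigma\sigma}^{\mathrm m}=9\sigma^2/16>0$ is worth keeping: a positive determinant alone does not exclude a negative definite $2\times2$ matrix, and the paper leaves this step implicit.
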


\begin{proof}
The correlation identity \eqref{eq:critical-corr} gives
\[
\det\Gamma_{\mathrm m}^{\mathrm{crit}}>0,
\qquad
\det\!\left(\Gamma_{\mathrm o}^{\mathrm{crit}}\big|_{\{\sigma,H\}}\right)>0.
\]
\end{proof}

\begin{remark}
The proposition \ref{prop:Km-from-sd} together with the reductions in Appendix A shows that the critical coefficients come directly from the near zero behavior of the spectral density. $K_{\mathrm m}$ and $\beta_{\mathrm m}$ are obtained by evaluating the coefficient $c_H=\Gamma(2H+1)\sin(\pi H)$ and its logarithmic derivative at $H=3/4$. The corresponding constants $K_{\mathrm o}$ and $\beta_{\mathrm o}$ are obtained analogously for the sampled mfOU density. With these closed forms, every leading LAN constant  (see theorems \ref{thm:mfbm-lan} and \ref{thm:mfou-lan}) is explicit
\[
K_{\mathrm m}=K_{\mathrm o}=\frac{3\sqrt{2\pi}}{8},
\qquad
\beta_{\mathrm m}=\beta_{\mathrm o}=\frac{16}{3}-2\eul-4\log 2-\pi,
\]
\[
\Gamma_{\mathrm m}^{\mathrm{crit}}\big|_{\{\sigma,H\}}
=
\Gamma_{\mathrm o}^{\mathrm{crit}}\big|_{\{\sigma,H\}}
=
\begin{pmatrix}
\dfrac{9\sigma^2}{16} & \dfrac{9\sigma^3}{32}\\[1ex]
\dfrac{9\sigma^3}{32} & \dfrac{3\sigma^4}{16}
\end{pmatrix},
\qquad
I_H^{\mathrm{eff,m}}=I_H^{\mathrm{eff,o}}=\frac{3\sigma^4}{64}.
\]
where \[
I_H^{\mathrm{eff},\bullet}
=
\Gamma_{HH}^\bullet
-
\Gamma_{H \sigma }^\bullet
\bigl(\Gamma_{\sigma \sigma }^\bullet\bigr)^{-1}
\Gamma_{\sigma H}^\bullet,\qquad \bullet\in\{\mathrm m,\mathrm o\}.\] Thus the critical LAN expansion and the score-test normalization do not contain any unnamed leading constants.
\end{remark}
\subsection{Exact score functions}
This subsection establishes the exact score representations at the critical point. These formulas will be used in the next section to derive the trace asymptotics, score CLTs and LAN expansions.
\begin{proposition} \label{prop:exact-scores}
Let $\Sigma_n(\theta)$ be the relevant covariance matrix and define
\[
Z_n:=\Sigma_n(\theta)^{-1/2}X_n\sim \mathcal{N}(0,I_n).
\]
Then every Gaussian score function $S_{i,n}(\theta)$ has the exact representation
\begin{equation}\label{eq:exact-score}
S_{i,n}=\frac12Q_n(M_{i,n}),
\qquad
M_{i,n}:=\Sigma_n(\theta)^{-1/2}\,\partial_{\theta_i}\Sigma_n(\theta)\,\Sigma_n(\theta)^{-1/2}
\end{equation}
where $Q_n(A):=Z_n^{\top}AZ_n-\tr(A)$ denotes the centered Gaussian quadratic form.
\end{proposition}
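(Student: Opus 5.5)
The plan is to write out the Gaussian log-likelihood, differentiate it, and re-express both pieces in terms of the whitened vector $Z_n$. For $X_n\sim\mathcal N(0,\Sigma_n(\theta))$ with $\Sigma_n(\theta)$ positive definite and differentiable in $\theta$, the log-likelihood is
\[
\ell_n(\theta)=-\frac n2\log(2\pi)-\frac12\log\det\Sigma_n(\theta)-\frac12X_n^\top\Sigma_n(\theta)^{-1}X_n .
\]
Positive definiteness holds in both models. In the mfBm case, $\Sigma_n^{\mathrm m}=\Delta_n(I_n+\sigma^2\Delta_n^{2H-1}\Toe{f_H})$ with $\Toe{f_H}\succeq0$ because $f_H\ge0$. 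In the mfOU case, $\Sigma_n$ is the covariance of a nondegenerate stationary process.

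Next I differentiate the two terms. Jacobi's formula gives
\[
\partial_{\theta_i}\log\det\Sigma_n=\tr\bigl(\Sigma_n^{-1}\partial_{\theta_i}\Sigma_n\bigr),
\]
and differentiating $\Sigma_n\Sigma_n^{-1}=I_n$ gives
\[
\partial_{\theta_i}\Sigma_n^{-1}=-\Sigma_n^{-1}(\partial_{\theta_i}\Sigma_n)\Sigma_n^{-1}.
\]
Together these yield
\[
S_{i,n}=\partial_{\theta_i}\ell_n=\tfrac12X_n^\top\Sigma_n^{-1}(\partial_{\theta_i}\Sigma_n)\Sigma_n^{-1}X_n-\tfrac12\tr\bigl(\Sigma_n^{-1}\partial_{\theta_i}\Sigma_n\bigr).
\]

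Then I substitute $X_n=\Sigma_n^{1/2}Z_n$, using the symmetric positive square root. The quadratic term becomes $Z_n^\top\Sigma_n^{-1/2}(\partial_{\theta_i}\Sigma_n)\Sigma_n^{-1/2}Z_n=Z_n^\top M_{i,n}Z_n$. By cyclicity of the trace,
\[
\tr\bigl(\Sigma_n^{-1}\partial_{\theta_i}\Sigma_n\bigr)=\tr\bigl(\Sigma_n^{-1/2}\partial_{\theta_i}\Sigma_n\Sigma_n^{-1/2}\bigr)=\tr(M_{i,n}).
\]
Hence $S_{i,n}=\tfrac12\bigl(Z_n^\top M_{i,n}Z_n-\tr M_{i,n}\bigr)=\tfrac12Q_n(M_{i,n})$. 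The claim $Z_n\sim\mathcal N(0,I_n)$ follows from $\operatorname{Cov}(Z_n)=\Sigma_n^{-1/2}\Sigma_n\Sigma_n^{-1/2}=I_n$. The centering is consistent with $\E[Z_n^\top MZ_n]=\tr M$, so $\E S_{i,n}=0$.

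The only step needing care, though not a real obstacle, is justifying that $\theta\mapsto\Sigma_n(\theta)$ is differentiable entrywise near $\vartheta_{\mathrm m}$ and $\vartheta_{\mathrm o}$. For mfBm the entries are explicit: $\gamma_H(k)=\tfrac12(|k+1|^{2H}-2|k|^{2H}+|k-1|^{2H})$ times $\sigma^2\Delta_n^{2H}$, which are smooth in $(\sigma,H)$. For mfOU the entries are integrals of the spectral density $f^{\mathrm o}_\Delta$. There I would differentiate under the integral sign, dominating $\partial_H f^{\mathrm o}_\Delta$ by $C|\lambda|^{1-2H'}(1+|\log|\lambda||)$, with $H'$ in a neighbourhood of $3/4$; this is integrable near $0$ because $1-2H'>-1$, and the bound uses the logarithmic-derivative expansion stated for $f_\Delta^{\mathrm o}$. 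Derivatives in $\sigma$ and $\alpha$ are handled the same way.
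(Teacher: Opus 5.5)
Your proposal is correct and takes essentially the same route as the paper. You differentiate the Gaussian log-likelihood using Jacobi's formula and $\partial_{\theta_i}\Sigma_n^{-1}=-\Sigma_n^{-1}(\partial_{\theta_i}\Sigma_n)\Sigma_n^{-1}$, then rewrite in terms of the whitened vector $Z_n$; the paper identifies the trace term through $\E[X_n^\top A X_n]=\tr(A\Sigma_n)$, where you use cyclicity of the trace directly, and your added checks of positive definiteness and differentiability of $\Sigma_n(\theta)$ are details the paper leaves implicit.
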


\begin{proof}
The Gaussian score identity gives
\[
S_{i,n}(\theta)=\partial_{\theta_i}\ell_n(\theta)
=-\frac12\tr\!\bigl(\Sigma_n(\theta)^{-1}\partial_{\theta_i}\Sigma_n(\theta)\bigr)
+\frac12X_n^\top \Sigma_n(\theta)^{-1}(\partial_{\theta_i}\Sigma_n(\theta))\Sigma_n(\theta)^{-1}X_n.
\]
Using $\E[X_n^\top AX_n]=\tr(A\Sigma_n(\theta))$ yields \eqref{eq:exact-score}.
\end{proof}We now rewrite the raw score functions at the critical boundary \(H=3/4\) into exact decompositions that isolate the singular \(\sigma\)-direction in the \(H\)-score. We refer to this step as the first transformation. This yields the transformed \(H\)-scores. For mfBm define
\begin{equation} \label{eq:Cm-def}
    \Cmn:=M_{\sigma,n}^{\mathrm m},
\qquad
M_{H,n}^{\mathrm m}:=\Sigma_n^{\mathrm m}(\theta)^{-1/2}\,\partial_H\Sigma_n^{\mathrm m}(\theta)\,\Sigma_n^{\mathrm m}(\theta)^{-1/2},
\end{equation}
all evaluated at $\theta=\vartheta_{\mathrm m}$. Since
\[
\partial_H\bigl(\sigma^2\Delta_n^{2H-1}\bigr)=2\sigma^2\Delta_n^{2H-1}\log\Delta_n=-2\sigma^2\Delta_n^{2H-1}\Ln,
\]
one gets the exact splitting
\begin{equation}\label{eq:mfbm-split}
S_{H,n}^{\mathrm m}=-\sigma\Ln\,S_{\sigma,n}^{\mathrm m}+R_{H,n}^{\mathrm m},
\qquad
R_{H,n}^{\mathrm m}=\frac12Q_n(\Dmn),
\qquad
\Dmn:=M_{H,n}^{\mathrm m}+\sigma\Ln\,\Cmn.
\end{equation}
The positive sign in the definition of $\Dmn$ comes from replacing $\log\Delta_n$ by $-\Ln$, this sign convention keeps the critical decomposition unambiguous.\\
Equation~\eqref{eq:mfbm-split} also defines what we call the first transformation at the critical boundary. We replace the raw pair $\bigl(S_{\sigma,n}^{\mathrm m},S_{H,n}^{\mathrm m}\bigr)$ by the transformed pair $\bigl(S_{\sigma,n}^{\mathrm m},R_{H,n}^{\mathrm m}\bigr)$, thereby removing the explicit singular coupling term $-\sigma\Ln\,S_{\sigma,n}^{\mathrm m}$ from the $H$-score.\\
For mfOU define
\[
\Con:=M_{\sigma,n}^{\mathrm o},
\qquad
\Don:=M_{H,n}^{\mathrm o}+\sigma\Ln\,M_{\sigma,n}^{\mathrm o},
\qquad
\Aon:=M_{\alpha,n}^{\mathrm o},
\]
all evaluated at $\theta=\vartheta_{\mathrm o}$. Then
\begin{equation}\label{eq:mfou-split}
S_{H,n}^{\mathrm o}=-\sigma\Ln\,S_{\sigma,n}^{\mathrm o}+R_{H,n}^{\mathrm o},
\qquad
R_{H,n}^{\mathrm o}=\frac12Q_n(\Don),
\qquad
S_{\alpha,n}^{\mathrm o}=\frac12Q_n(\Aon).
\end{equation}
Accordingly, we refer to $(S^m_{\sigma,n},R^m_{H,n})^\top$ in the mfBm case and $(S^o_{\sigma,n},R^o_{H,n},S^o_{\alpha,n})^\top$ in the mfOU case as the transformed score vectors.

Here, we define the term critical profile as the leading reduced frequency-domain symbol of the corresponding score at the critical point $H=3/4$.
\paragraph{Critical profiles.}
Let
\begin{equation}\label{eq:weight-m}
w_{\mathrm m}(u):=\frac{\eta_{\mathrm m}\abs{u}^{-1/2}}{1+\eta_{\mathrm m}\abs{u}^{-1/2}},
\qquad
\eta_{\mathrm m}:=\sigma^2K_{\mathrm m},
\end{equation}
and
\begin{equation}\label{eq:weight-o}
w_{\mathrm o}(u):=\frac{\eta_{\mathrm o}\abs{u}^{-1/2}}{1+\eta_{\mathrm o}\abs{u}^{-1/2}},
\qquad
\eta_{\mathrm o}:=\sigma^2K_{\mathrm o}.
\end{equation}
The critical score profiles are
\begin{equation}\label{eq:sigma-profiles}
g_{\sigma}^{\mathrm m}(u):=\frac{2}{\sigma}w_{\mathrm m}(u),
\qquad
g_{\sigma}^{\mathrm o}(u):=\frac{2}{\sigma}w_{\mathrm o}(u),
\end{equation}
and
\begin{equation}\label{eq:h-profiles}
g_{H,n}^{\mathrm m}(u):=w_{\mathrm m}(u)\bigl(2\Ln+\beta_{\mathrm m}-2\log\abs{u}\bigr),
\qquad
g_{H,n}^{\mathrm o}(u):=w_{\mathrm o}(u)\bigl(2\Ln+\beta_{\mathrm o}-2\log\abs{u}\bigr).
\end{equation}
For the mfOU drift parameter,
\begin{equation}\label{eq:alpha-profile}
g_{\alpha}^{\mathrm o}(u):=-\frac{2\alpha}{\alpha^2+u^2}.
\end{equation}
The additional factor $2\Ln$ in \eqref{eq:h-profiles} survives after removing the explicit $-\sigma\Ln\,S_{\sigma,n}$ term. It comes from the derivative of the low frequency singularity itself.

\begin{lemma} \label{lem:tail-orders}
As $\abs{u}\to\infty$,
\[
w_{\mathrm m}(u)\sim \eta_{\mathrm m}\abs{u}^{-1/2},
\qquad
w_{\mathrm o}(u)\sim \eta_{\mathrm o}\abs{u}^{-1/2}.
\]
Consequently,
\begin{align*}
\bigl(g_{\sigma}^{\mathrm m}(u)\bigr)^2 &\sim \frac{4\eta_{\mathrm m}^2}{\sigma^2}\abs{u}^{-1},
&
\bigl(g_{\sigma}^{\mathrm o}(u)\bigr)^2 &\sim \frac{4\eta_{\mathrm o}^2}{\sigma^2}\abs{u}^{-1},\\
\bigl(g_{H,n}^{\mathrm m}(u)\bigr)^2 &\sim \eta_{\mathrm m}^2\abs{u}^{-1}\bigl(2\Ln+\beta_{\mathrm m}-2\log\abs{u}\bigr)^2,
&
\bigl(g_{H,n}^{\mathrm o}(u)\bigr)^2 &\sim \eta_{\mathrm o}^2\abs{u}^{-1}\bigl(2\Ln+\beta_{\mathrm o}-2\log\abs{u}\bigr)^2,
\end{align*}
and
\begin{align*}
g_{\sigma}^{\mathrm m}(u)g_{H,n}^{\mathrm m}(u)
&\sim \frac{2\eta_{\mathrm m}^2}{\sigma}\abs{u}^{-1}\bigl(2\Ln+\beta_{\mathrm m}-2\log\abs{u}\bigr),\\
g_{\sigma}^{\mathrm o}(u)g_{H,n}^{\mathrm o}(u)
&\sim \frac{2\eta_{\mathrm o}^2}{\sigma}\abs{u}^{-1}\bigl(2\Ln+\beta_{\mathrm o}-2\log\abs{u}\bigr),\\
g_{\alpha}^{\mathrm o}(u) &\sim -2\alpha\abs{u}^{-2}.
\end{align*}
\end{lemma}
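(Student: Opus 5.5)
The plan is to prove everything from the asymptotics of the single weight function $w_\bullet$, $\bullet\in\{\mathrm m,\mathrm o\}$; the two models are identical in form, since $\eta_{\mathrm o}=\eta_{\mathrm m}$ and the definitions \eqref{eq:weight-m}--\eqref{eq:weight-o} coincide. The one substantive limit is the first claim. Write $x:=\eta_\bullet|u|^{-1/2}$, so that $w_\bullet(u)=x/(1+x)$. As $|u|\to\infty$ we have $x\to0$, hence
\[
\frac{w_\bullet(u)}{\eta_\bullet|u|^{-1/2}}=\frac{1}{1+\eta_\bullet|u|^{-1/2}}\to1 .
\]
This gives $w_\bullet(u)\sim\eta_\bullet|u|^{-1/2}$. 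It also gives the more precise statement $w_\bullet(u)=\eta_\bullet|u|^{-1/2}\bigl(1+O(|u|^{-1/2})\bigr)$, which I will use to control uniformity below.

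The displayed products then follow because asymptotic equivalence is preserved under products and under multiplication by any common factor, including factors that vanish or change sign. Squaring \eqref{eq:sigma-profiles} gives $(g_\sigma^\bullet)^2=(4/\sigma^2)w_\bullet^2\sim(4\eta_\bullet^2/\sigma^2)|u|^{-1}$. For the $H$-profiles, \eqref{eq:h-profiles} factors as
\[
g_{H,n}^\bullet=w_\bullet\,\ell_n(u),\qquad \ell_n(u):=2\Ln+\beta_\bullet-2\log|u|.
\]
Hence $(g_{H,n}^\bullet)^2=w_\bullet^2\ell_n^2$ and $g_\sigma^\bullet g_{H,n}^\bullet=(2/\sigma)w_\bullet^2\ell_n$. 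In both, the ratio to the claimed equivalent is exactly $\bigl(w_\bullet/(\eta_\bullet|u|^{-1/2})\bigr)^2$. That ratio does not involve $n$ or $\ell_n$, so it tends to $1$ uniformly in $n$.

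This uniformity is the main point to get right. The logarithmic factor $\ell_n$ vanishes at $|u|=\Delta_n^{-1}e^{\beta_\bullet/2}$, so one cannot divide by it. Writing the ratio explicitly as above avoids any division by $\ell_n$. I would make this explicit so that the later trace computations over $|u|\lesssim\Delta_n^{-1}$ can use the equivalences with errors $O(|u|^{-1/2})$ uniform in $n$.

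Finally, for the drift profile \eqref{eq:alpha-profile}, $g_\alpha^{\mathrm o}(u)\,|u|^2/(-2\alpha)=u^2/(\alpha^2+u^2)\to1$, which gives $g_\alpha^{\mathrm o}(u)\sim-2\alpha|u|^{-2}$ and completes the lemma.
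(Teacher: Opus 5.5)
Your proof is correct and takes the same route as the paper, which simply declares the lemma immediate from the definitions \eqref{eq:weight-m}--\eqref{eq:alpha-profile}; you write out that verification explicitly. You also observe that the correction factor $\bigl(1+\eta_\bullet|u|^{-1/2}\bigr)^{-2}$ does not depend on $n$, so the equivalences for $g_{H,n}^\bullet$ hold uniformly in $n$ without dividing by the vanishing factor $2\Ln+\beta_\bullet-2\log|u|$; this is a worthwhile refinement that the paper leaves implicit.
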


\begin{proof}
This is immediate from \eqref{eq:weight-m}--\eqref{eq:alpha-profile}.
\end{proof}
In short, the critical profiles \(g_{\sigma}^{\bullet}(u)\), \(g_{H,n}^{\bullet}(u)\) (with \(\bullet\in\{\mathrm{m},\mathrm{o}\}\)) and \(g_{\alpha}^{\mathrm{o}}(u)\) capture the low-frequency behavior of the score matrices at the boundary \(H=3/4\). They translate matrix traces into explicit integrals over the scaled frequency \(u\). Consequently, all leading constants in the score CLTs and LAN information matrices are expressed in closed form via integrals of these profiles, providing a unified description of the boundary experiment.
\section{Main asymptotic results} \label{sec:main-results}
\subsection{Critical score central limit theorems}

With the score decomposition in hand, we can now pass from exact identities to their asymptotic meaning at the boundary $H=3/4$. Throughout this section,
\[
L_n:=\log(1/\Delta_n),\qquad \Delta_n\downarrow0,\qquad n\Delta_n\to\infty,
\]
and all score matrices and score functions are evaluated at the critical parameter point. For the mfBm experiment we write
\[
\vartheta_m:=(\sigma,3/4),\qquad \sigma>0,
\]
while for the mfOU experiment we write
\[
\vartheta_o:=(\sigma,3/4,\alpha),\qquad \sigma>0,\ \alpha>0.
\]
The decisive feature proved in Section~\ref{sec:preliminaries} is that the raw $H$-score does not need to be discarded at the boundary; it needs to be recentered in the right direction. More precisely,
\[
S^m_{H,n}=-\sigma L_n S^m_{\sigma,n}+R^m_{H,n},
\qquad
S^o_{H,n}=-\sigma L_n S^o_{\sigma,n}+R^o_{H,n}.
\]
The explicit $\sigma L_n$ term is the only first-order obstruction. Once it is peeled away, the residual score has its own critical fluctuation scale and can be handled jointly with the nuisance directions. The limit theorems below show that this transformed system has a non-degenerate Gaussian limit for both mixed fractional experiments. In other words, the boundary is singular in the original coordinates but regular enough in the transformed ones to support a full LAN analysis and, later on, a score test.

\begin{theorem}
  \label{thm:mfbm-main}
Consider the mixed fractional Brownian motion model at the critical point
\(
\vartheta_m=(\sigma,3/4)
\).
Then, as \(n\to\infty\),
\[
\left(
\frac{S_{\sigma,n}^{\mathrm m}}{\sqrt{n\Delta_n\Ln}},
\frac{R_{H,n}^{\mathrm m}}{\sqrt{n\Delta_n}\,\Ln^{3/2}}
\right)
\dto\mathcal{N}\bigl(0,\Gamma_{\mathrm m}^{\mathrm{crit}}\bigr)
\]
where $\Gamma_{\mathrm m}^{\mathrm{crit}}$ is defined by equation \eqref{eq:gamma-m-main}.
In particular, the critical fluctuation scale of the \(\sigma\)-score is
\(
(n\Delta_nL_n)^{1/2}
\),
whereas the critical fluctuation scale of the transformed \(H\)-score is
\(
(n\Delta_nL_n^3)^{1/2}
\).
\end{theorem}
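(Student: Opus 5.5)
Both coordinates are centered Gaussian quadratic forms in the same standard Gaussian vector $Z_n$. By Proposition~\ref{prop:exact-scores} and \eqref{eq:mfbm-split},
\[
S_{\sigma,n}^{\mathrm m}=\tfrac12 Q_n(\Cmn),
\qquad
R_{H,n}^{\mathrm m}=\tfrac12 Q_n(\Dmn).
\]
So by the Cramér--Wold device it suffices to treat, for fixed $(a,b)\in\R^2$, the single quadratic form $\tfrac12 Q_n(A_n)$ with
\[
A_n:=\frac{a\,\Cmn}{\sqrt{n\Delta_n\Ln}}+\frac{b\,\Dmn}{\sqrt{n\Delta_n}\,\Ln^{3/2}}.
\]
For symmetric $A_n$ one has $\Var(\tfrac12 Q_n(A_n))=\tfrac12\tr(A_n^2)$. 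The fourth-moment (or cumulant) criterion then reduces the CLT to two steps. \textbf{(i)} Identify the variance: show $\tfrac12\tr(A_n^2)\to (a,b)\,\Gamma_{\mathrm m}^{\mathrm{crit}}(a,b)^\top$. \textbf{(ii)} Show the spectral negligibility $\|A_n\|_{\op}^2/\tr(A_n^2)\to0$, equivalently $\tr(A_n^4)/(\tr A_n^2)^2\to0$. Since $\tfrac12\Xi=\Gamma$, step (i) amounts to computing
\[
\tr\bigl((\Cmn)^2\bigr)\sim \Xi_{\sigma\sigma}^{\mathrm m}\,n\Delta_n\Ln,\quad
\tr(\Cmn\Dmn)\sim \Xi_{\sigma H}^{\mathrm m}\,n\Delta_n\Ln^2,\quad
\tr\bigl((\Dmn)^2\bigr)\sim \Xi_{HH}^{\mathrm m}\,n\Delta_n\Ln^3 .
\]

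For the trace asymptotics, write $\Sigma_n^{\mathrm m}=\Delta_n\Toe{1+\sigma^2\Delta_n^{1/2}f_{3/4}}$. Then $\Cmn$ and $M_{H,n}^{\mathrm m}$ are products of Toeplitz matrices and inverses of Toeplitz matrices. Using Szegő/Avram--Parter-type approximations of products of Toeplitz matrices by the Toeplitz matrix of the product symbol, I would replace $\tr(M_iM_j)$ by $\frac{n}{2\pi}\int_{-\pi}^{\pi}\phi_i\phi_j\,d\lambda$. Here, with $\eta_n(\lambda):=\sigma^2\Delta_n^{1/2}f_{3/4}(\lambda)$, the symbols are
\[
\phi_\sigma=\frac{2}{\sigma}\,\frac{\eta_n}{1+\eta_n},
\qquad
\phi_H-(-\sigma\Ln)\phi_\sigma=\frac{\sigma^2\Delta_n^{1/2}\,\partial_Hf}{1+\eta_n}.
\]
By \eqref{eq:mfbm-lowfreq} the second symbol equals $\frac{\eta_n}{1+\eta_n}\bigl(\beta_{\mathrm m}-2\log|\lambda|+\rho_{\mathrm m}\bigr)$.

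Substitute $\lambda=\Delta_n u$. The factor $\eta_n\approx\eta_{\mathrm m}|u|^{-1/2}$ becomes the weight $w_{\mathrm m}(u)$, and $-2\log|\lambda|=2\Ln-2\log|u|$ produces exactly the profiles $g_\sigma^{\mathrm m}$ and $g_{H,n}^{\mathrm m}$. The traces therefore become $\frac{n\Delta_n}{2\pi}\int_{|u|\le\pi/\Delta_n}g_ig_j\,du$. By Lemma~\ref{lem:tail-orders} these integrands behave like $|u|^{-1}$ times a polynomial in $(\Ln-\log|u|)$, so the integral is dominated by the range $1\ll|u|\le\pi/\Delta_n$. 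Changing variables $s=\log|u|/\Ln\in[0,1]$ gives the factors:
\[
\int_0^1 ds=1,\qquad \int_0^1 2(1-s)\,ds=1,\qquad \int_0^1 4(1-s)^2\,ds=\tfrac43,
\]
times $\Ln$, $\Ln^2$, $\Ln^3$ respectively. Combined with $2\eta_{\mathrm m}^2/2\pi$ and the $2/\sigma$ factors, these reproduce $4\sigma^2K_{\mathrm m}^2/\pi$, $2\sigma^3K_{\mathrm m}^2/\pi$ and $4\sigma^4K_{\mathrm m}^2/(3\pi)$. The $\beta_{\mathrm m}$, $\rho_{\mathrm m}$ and $r_{\mathrm m}$ contributions, the bounded-$u$ region, and the high-frequency region where $f_{3/4}$ deviates from its power law only contribute lower powers of $\Ln$.

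Step (ii) is comparatively easy. The eigenvalues of $\Cmn$ are bounded by $2/\sigma$, because $\Cmn=\frac{2}{\sigma}\,\Sigma^{-1/2}(\Sigma-\Delta_nI)\Sigma^{-1/2}$ has spectrum in $[0,2/\sigma)$. Similarly $\|\Dmn\|_{\op}=O(\Ln)$, using that $\partial_Hf/f$ grows only logarithmically. Hence $\|A_n\|_{\op}^2=O\bigl((n\Delta_n\Ln)^{-1}\bigr)$, while $\tr(A_n^2)$ has order one. This vanishes because $n\Delta_n\to\infty$.

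The main obstacle is making the Toeplitz-to-integral replacement rigorous with errors $o(n\Delta_n\Ln^3)$. The symbols depend on $n$, are singular at $0$ like $|\lambda|^{-1/2}$ (and with $\log$ factors), and effectively live on the window $|\lambda|\gtrsim\Delta_n$. Standard Szegő bounds give errors of order $n^{\varepsilon}$ times symbol norms. I would control these via the known trace approximation results for products of singular symbols (Fox--Taqqu/Dahlhaus-type bounds with $L^p$ conditions), applied with exponents exploiting $|\lambda|^{-1/2}\in L^{p}$ for $p<2$. I would also check that the resulting remainder is $O(n^{1/2+\varepsilon}\cdot\mathrm{polylog})$, which is $o(n\Delta_n)$ only under a rate condition. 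If that fails, I would instead use the exact identity $\Cmn=\frac{2}{\sigma}(I-\Delta_n\Sigma^{-1})$ to reduce everything to traces of powers of $\Delta_n\Sigma^{-1}$, controlled by the bounds in the appendix.
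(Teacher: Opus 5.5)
Your route is the paper's: Cram\'er--Wold applied to $\frac12Q_n(A_n)$, the three trace asymptotics of Proposition~\ref{prop:mfbm-traces} obtained from the profiles $g_\sigma^{\mathrm m},g_{H,n}^{\mathrm m}$, and the operator/Frobenius criterion of Lemma~\ref{lem:qf-clt}. Your symbol computations and the factors $1,1,\frac43$ are right. The Toeplitz-to-integral replacement you worry about is a step the paper does not prove either; it delegates it to the ``standard Toeplitz trace reduction'' of \cite{Cai2026}. Your identity $\Cmn=\frac{2}{\sigma}(I-\Delta_n\Sigma^{-1})$ gives $\norm{\Cmn}_{\op}<2/\sigma$ more cleanly than the paper's symbol argument.

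One estimate in step (ii) is wrong: $\norm{\Dmn}_{\op}$ is of order $\log n$, not $O(\Ln)$. Indeed $\Dmn=\sigma^2\Delta_n^{3/2}\Sigma^{-1/2}\Toe{\partial_Hf_H|_{H=3/4}}\Sigma^{-1/2}$ has approximate symbol $\frac{\eta_n}{1+\eta_n}\bigl(\beta_{\mathrm m}-2\log|\lambda|+\rho_{\mathrm m}\bigr)$. The weight $\eta_n/(1+\eta_n)$ tends to $1$, not $0$, for $|\lambda|\ll\Delta_n$. So the logarithm is not cut off at the critical window $|\lambda|\sim\Delta_n$. It is cut off only at the Toeplitz resolution $|\lambda|\sim 1/n$, where it equals $2\Ln+2\log(n\Delta_n)=2\log n$. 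Concretely, the Rayleigh quotient at $y=\Sigma^{1/2}\mathbf 1$ is $(2+o(1))\log n$ by a Fej\'er-kernel computation. When $\Delta_n\downarrow0$ slowly, e.g.\ $\Delta_n=1/\log n$, one has $\log n\gg\Ln$. Then both $\norm{\Dmn}_{\op}=O(\Ln)$ and $\norm{A_n}_{\op}^2=O((n\Delta_n\Ln)^{-1})$ are false.

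The conclusion survives with the paper's fix. Use $\norm{\Dmn}_{\op}=O(\log n)$ and split $\log n=\Ln+\log(n\Delta_n)$:
\[
\frac{\log n}{\sqrt{n\Delta_n}\,\Ln^{3/2}}\le\frac{1}{\sqrt{n\Delta_n\Ln}}+\frac{\log(n\Delta_n)}{\sqrt{n\Delta_n}\,\Ln^{3/2}}\to0 .
\]
Separately, your fallback cannot reduce ``everything'' to traces of powers of $\Delta_n\Sigma^{-1}$. $\Dmn$ is not a function of $\Sigma$, so the cross and $HH$ traces still need a genuine trace approximation for products involving $\Toe{\partial_Hf_H}$ and $\Sigma^{-1}$.
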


\begin{theorem}
\label{thm:mfou-main}
Consider the mixed fractional Ornstein--Uhlenbeck model at the critical point
\(
\vartheta_o=(\sigma,3/4,\alpha)
\).
Then, as \(n\to\infty\),
\[
\left(
\frac{S_{\sigma,n}^{\mathrm o}}{\sqrt{n\Delta_n\Ln}},
\frac{R_{H,n}^{\mathrm o}}{\sqrt{n\Delta_n}\,\Ln^{3/2}},
\frac{S_{\alpha,n}^{\mathrm o}}{\sqrt{n\Delta_n}}
\right)
\dto\mathcal{N}\bigl(0,\Gamma_{\mathrm o}^{\mathrm{crit}}\bigr).
\]
where $\Gamma_{\mathrm o}^{\mathrm{crit}}$ is defined by equation \eqref{eq:gamma-o-main}. In particular, the \((\sigma,H)\)-block and the \(\alpha\)-direction separate at the leading order, so the drift parameter does not interfere with the first-order boundary geometry.
\end{theorem}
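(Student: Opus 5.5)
The plan is to run the argument used for Theorem~\ref{thm:mfbm-main} on the three-dimensional transformed score of the mfOU experiment. By Proposition~\ref{prop:exact-scores} and \eqref{eq:mfou-split}, each component is a centered Gaussian quadratic form $\frac12 Q_n(A)$ in the same standard normal vector $Z_n$, with $A\in\{\Con,\Don,\Aon\}$. Fix $t=(t_1,t_2,t_3)\in\R^3$ and set
\[
A_n(t):=\frac{t_1\Con}{\sqrt{n\Delta_n\Ln}}+\frac{t_2\Don}{\sqrt{n\Delta_n}\Ln^{3/2}}+\frac{t_3\Aon}{\sqrt{n\Delta_n}}.
\]
By the Cram\'er--Wold device it suffices to show that $\frac12Q_n(A_n(t))\dto\mathcal N(0,t^\top\Gamma_{\mathrm o}^{\mathrm{crit}}t)$. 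For Gaussian quadratic forms we have $\Var(\frac12Q_n(A))=\frac12\tr(A^2)$. The classical fourth-moment criterion then gives asymptotic normality once two things hold: the variance converges, and $\norm{A_n(t)}_{\op}\to0$, or equivalently $\tr(A_n(t)^4)\to0$.

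\emph{Step 1: second-order traces.} I would approximate each normalized score matrix by a Toeplitz matrix whose symbol is the corresponding critical profile in the rescaled frequency $u=\lambda/\Delta_n$: $g^{\mathrm o}_\sigma$, $g^{\mathrm o}_{H,n}$ or $g^{\mathrm o}_\alpha$, up to the normalization conventions of Appendix~\ref{app:profiles}. Then I would use the trace approximation
\[
\tr(M_aM_b)\approx \frac{n\Delta_n}{2\pi}\int_{|u|\le\pi/\Delta_n} g_a(u)g_b(u)\,\dd u .
\]
Lemma~\ref{lem:tail-orders} fixes the orders of these integrals.
\begin{itemize}[leftmargin=*]
\item $(g^{\mathrm o}_\sigma)^2\sim c|u|^{-1}$, and integrating up to $\pi/\Delta_n$ gives $\sim \frac{8\eta_{\mathrm o}^2}{\sigma^2}\Ln$. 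This yields $\Xi^{\mathrm o}_{\sigma\sigma}$ after the $1/(2\pi)$ factor.
\item In the cross term, $\int^{\pi/\Delta_n}|u|^{-1}(2\Ln-2\log|u|)\,\dd u$ is of order $\Ln^2$. The coefficient is $\int_0^{\Ln}(2\Ln-2s)\,\dd s=\Ln^2$. Together with the normalization $\Ln^{1/2}\cdot\Ln^{3/2}=\Ln^2$, this gives $\Xi_{\sigma H}^{\mathrm o}$.
\item For the $HH$ entry, $\int_0^{\Ln}(2\Ln-2s)^2\,\dd s=\frac43\Ln^3$, which gives the factor $1/3$ in $\Xi_{HH}^{\mathrm o}$.
\end{itemize}
The constant $\beta_{\mathrm o}$ and the bounded region $|u|\lesssim1$, where $w_{\mathrm o}$ is not yet in its tail regime, contribute only lower powers of $\Ln$, so they vanish after normalization.

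\emph{Step 2: the drift block and the off-diagonal $\alpha$ entries.} The profile $g^{\mathrm o}_\alpha$ is integrable and of order one, so $\frac{1}{n\Delta_n}\tr((\Aon)^2)\to\frac1{2\pi}\int g_\alpha^2=\frac{1}{\alpha}$. Indeed, $\int 4\alpha^2(\alpha^2+u^2)^{-2}\,\dd u=2\pi/\alpha$. For the cross terms, $\tr(\Aon\Con)$ is $n\Delta_n\int g_\alpha g_\sigma=O(n\Delta_n)$, because the integrand decays like $|u|^{-5/2}$. Dividing by $n\Delta_n\Ln^{1/2}$ sends it to zero. Similarly, $\tr(\Aon\Don)=O(n\Delta_n\Ln)$, and dividing by $n\Delta_n\Ln^{3/2}$ also gives zero. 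This establishes the block-diagonal structure of $\Gamma_{\mathrm o}^{\mathrm{crit}}$.

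\emph{Step 3: operator norms.} The symbols $g_\sigma$ and $g_\alpha$ are bounded, while $|g_{H,n}|\lesssim \Ln$ on the relevant range. Hence $\norm{\Con}_{\op}=O(1)$, $\norm{\Don}_{\op}=O(\Ln)$ and $\norm{\Aon}_{\op}=O(1)$. It follows that $\norm{A_n(t)}_{\op}=O((n\Delta_n)^{-1/2}\Ln^{-1/2})\to0$, since $n\Delta_n\to\infty$. Combined with the bounded variance, this gives $\tr(A_n^4)\le\norm{A_n}_{\op}^2\tr(A_n^2)\to0$.

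\emph{Main obstacle.} The hard step is Step 1 for the mfOU model. The matrices $M^{\mathrm o}_{\cdot,n}$ are products of inverse square roots of the sampled covariance with its derivatives, not Toeplitz matrices. The trace approximation must be shown to have error $o(n\Delta_n\Ln^{3})$ for the $HH$ entry. This has to hold despite the singular symbol at $0$ and the logarithmic growth of the profile. The approach I would try first is to replace $\Sigma^{-1/2}\partial\Sigma\,\Sigma^{-1/2}$ by $\Toe{\partial f/f}$ up to a Hilbert--Schmidt error. For this I would use the Fox--Taqqu/Dahlhaus trace bounds for symbols with power singularities of combined order $<1$; here the combined order is $|u|^{-1/2}\cdot|u|^{1/2}$, so the condition holds. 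The logarithmic factors only cost powers of $\Ln$, which are dominated by the $n\Delta_n$ gain. Finally, the sampled-spectrum expansions of $f^{\mathrm o}_\Delta$ from the Preliminaries would identify the limiting symbols with the profiles.
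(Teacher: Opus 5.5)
Your proposal follows the paper's proof of this theorem. You use Cram\'er--Wold on $\frac12 Q_n(A_n(t))$, the operator/Frobenius criterion of Lemma~\ref{lem:qf-clt}, and the profile-integral trace asymptotics of Proposition~\ref{prop:mfou-traces}. The normalized $\alpha$-cross traces decay like $\Ln^{-1/2}$, and the Toeplitz trace reduction you flag as the main obstacle is exactly the step the paper delegates to \cite{Cai2026mfou}.

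One correction is needed in Step~3. As $u\to0$ one has $w_{\mathrm o}(u)\to1$ and $-2\log\abs{u}\to\infty$. On the Fourier grid $\abs{u}\gtrsim(n\Delta_n)^{-1}$ the profile $g_{H,n}^{\mathrm o}$ is therefore of size $\Ln+\log(n\Delta_n)=\log n$, not $O(\Ln)$. The right bound is $\norm{\Don}_{\op}=O(\log n)$, as in \eqref{eq:mfou-op}. Your rate $(n\Delta_n\Ln)^{-1/2}$ for $\norm{A_n(t)}_{\op}$ is too optimistic when $\log(n\Delta_n)\gg\Ln$. The conclusion $\norm{A_n(t)}_{\op}\to0$ still holds because $\log(n\Delta_n)/\sqrt{n\Delta_n}\to0$, which is how the paper argues it.
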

\subsection{Critical LAN expansions}
Theorems \ref{thm:mfbm-main} and \ref{thm:mfou-main} are deliberately written in terms of the transformed $H$-score rather than the raw one. This is not a cosmetic choice. At the boundary, the transformed coordinate system is the one in which the limit experiment becomes visible without further ad hoc projections. The following proposition records that coordinate change in matrix form and makes clear why a single triangular transformation is enough.
\begin{proposition}\label{prop:main-transform}
Let
\[
L_n^{\mathrm m}:=
\begin{pmatrix}
1 & 0\\
\sigma\Ln & 1
\end{pmatrix},
\qquad
L_n^{\mathrm o}:=
\begin{pmatrix}
1 & 0 & 0\\
\sigma\Ln & 1 & 0\\
0 & 0 & 1
\end{pmatrix}.
\]
Then
\[
L_n^{\mathrm m}
\begin{pmatrix}
S_{\sigma,n}^{\mathrm m}\\
S_{H,n}^{\mathrm m}
\end{pmatrix}
=
\begin{pmatrix}
S_{\sigma,n}^{\mathrm m}\\
R_{H,n}^{\mathrm m}
\end{pmatrix},
\qquad
L_n^{\mathrm o}
\begin{pmatrix}
S_{\sigma,n}^{\mathrm o}\\
S_{H,n}^{\mathrm o}\\
S_{\alpha,n}^{\mathrm o}
\end{pmatrix}
=
\begin{pmatrix}
S_{\sigma,n}^{\mathrm o}\\
R_{H,n}^{\mathrm o}\\
S_{\alpha,n}^{\mathrm o}
\end{pmatrix}.
\]
Thus, at the critical boundary \(H=3/4\), the explicit singular coupling in the \(\sigma\)-direction is removed by a single triangular transformation, and no second projection is required.
\end{proposition}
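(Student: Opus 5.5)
The claim is a direct matrix computation built on the exact splittings \eqref{eq:mfbm-split} and \eqref{eq:mfou-split}. I would first make the splitting itself fully rigorous. Proposition~\ref{prop:exact-scores} gives $S_{i,n}=\frac12 Q_n(M_{i,n})$, and $Q_n$ is linear in its matrix argument. So it suffices to check the matrix identity
\[
M_{H,n}^{\bullet}=-\sigma\Ln\,M_{\sigma,n}^{\bullet}+D_n^{\bullet},
\qquad \bullet\in\{\mathrm m,\mathrm o\},
\]
which holds by the definitions $\Dmn:=M_{H,n}^{\mathrm m}+\sigma\Ln\Cmn$ and $\Don:=M_{H,n}^{\mathrm o}+\sigma\Ln M_{\sigma,n}^{\mathrm o}$. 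Applying $\frac12Q_n(\cdot)$ to this identity yields
\[
S_{H,n}^\bullet=-\sigma\Ln S_{\sigma,n}^\bullet+R_{H,n}^\bullet,
\qquad\text{equivalently}\qquad
R_{H,n}^\bullet=S_{H,n}^\bullet+\sigma\Ln S_{\sigma,n}^\bullet .
\]

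For the mfBm case I would then multiply out $L_n^{\mathrm m}(S_{\sigma,n}^{\mathrm m},S_{H,n}^{\mathrm m})^\top$. The first row returns $S_{\sigma,n}^{\mathrm m}$. The second row returns $\sigma\Ln S_{\sigma,n}^{\mathrm m}+S_{H,n}^{\mathrm m}$, which equals $R_{H,n}^{\mathrm m}$ by the identity above. The mfOU case is identical: the third row and column of $L_n^{\mathrm o}$ are those of the identity matrix, so $S_{\alpha,n}^{\mathrm o}=\frac12Q_n(\Aon)$ passes through unchanged.

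The final sentence, that no second projection is needed, I would justify by combining this exact identity with Theorems~\ref{thm:mfbm-main} and~\ref{thm:mfou-main} and Corollary~\ref{cor:nondeg}. After the diagonal normalization $\mathrm{diag}\bigl((n\Delta_n\Ln)^{-1/2},(n\Delta_n\Ln^3)^{-1/2}\bigr)$ (with an extra $(n\Delta_n)^{-1/2}$ entry for $\alpha$), the transformed vector has a limiting covariance $\Gamma^{\mathrm{crit}}_\bullet$ that is positive definite. A nondegenerate limit means no further linear combination is required to remove degeneracy.

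The only point needing care, and the one I expect to be the main obstacle, is the sign convention. The entry in $L_n^\bullet$ must be $+\sigma\Ln$, because $\partial_H(\sigma^2\Delta_n^{2H-1})=-2\sigma^2\Delta_n^{2H-1}\Ln$ and $\partial_\sigma(\sigma^2\Delta_n^{2H-1})=2\sigma\Delta_n^{2H-1}$. Hence the $\Delta_n$-prefactor contribution to $\partial_H\Sigma_n$ equals $-\sigma\Ln\,\partial_\sigma\Sigma_n$ exactly. I would record this so that the identity is seen to be exact rather than asymptotic.
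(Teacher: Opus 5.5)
Your proposal is correct and follows the paper's own argument. The identity is just the definitional splittings \eqref{eq:mfbm-split} and \eqref{eq:mfou-split} written in matrix form, which your linearity-of-$Q_n$ check makes explicit. The \emph{no second projection} clause rests on the non-degeneracy of the critical $(\sigma,H)$-block, which the paper reads off from the correlation $\sqrt3/2<1$ in \eqref{eq:critical-corr}. One small caveat: your sign-convention check is a genuine exactness argument only for mfBm, where $\Sigma_n^{\mathrm m}$ carries the explicit prefactor $\sigma^2\Delta_n^{2H-1}$, whereas for mfOU no such prefactor is written down and the identity holds simply because $\Don$ is defined as $M_{H,n}^{\mathrm o}+\sigma\Ln M_{\sigma,n}^{\mathrm o}$.
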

\begin{proof}
This is a direct restatement of \eqref{eq:mfbm-split} and \eqref{eq:mfou-split} in matrix form. Moreover, by \eqref{eq:critical-corr},
\[
\frac{\Gamma^\bullet_{\sigma H}}{\sqrt{\Gamma^\bullet_{\sigma\sigma}\Gamma^\bullet_{HH}}}
=\frac{\sqrt3}{2}<1,
\qquad \bullet\in\{\mathrm m,\mathrm o\},
\]
so the critical \((\sigma,H)\)-block is already non-degenerate after the first transformation. Therefore no second projection is required.
\end{proof}
For later use in the LAN expansion, we introduce the corresponding critical rate matrices
\[
r^{-1}_{n,m}
:=
\begin{pmatrix}
(n\Delta_nL_n)^{-1/2} & \sigma (n\Delta_nL_n)^{-1/2}\\
0 & (n\Delta_n)^{-1/2}L_n^{-3/2}
\end{pmatrix},
\]
and
\[
r^{-1}_{n,o}
:=
\begin{pmatrix}
(n\Delta_nL_n)^{-1/2} & \sigma (n\Delta_nL_n)^{-1/2} & 0\\
0 & (n\Delta_n)^{-1/2}L_n^{-3/2} & 0\\
0 & 0 & (n\Delta_n)^{-1/2}
\end{pmatrix}.
\]
These matrices encode exactly the critical local scales of the parameters together with the first-order coupling between the \(\sigma\)- and \(H\)-directions. In this form, the local perturbation already reflects the geometry revealed by the score decomposition, and the log-likelihood admits the standard quadratic LAN representation.
\begin{theorem} \label{thm:mfbm-lan}
Let $\ell_n^{\mathrm m}$ be the Gaussian log-likelihood of the mfBm increment experiment and define
\[
\Xi_n^{\mathrm m}:=
\begin{pmatrix}
\dfrac{S_{\sigma,n}^{\mathrm m}}{\sqrt{n\Delta_n\Ln}}\\[0.8ex]
\dfrac{R_{H,n}^{\mathrm m}}{\sqrt{n\Delta_n}\,\Ln^{3/2}}
\end{pmatrix},
\qquad
\vartheta_{\mathrm m,n}(h):=\vartheta_{\mathrm m}+r_{n,\mathrm m}^{-1}h,
\qquad h\in\R^2.
\]
Then for every fixed $h\in\R^2$,
\[
\ell_n^{\mathrm m}(\vartheta_{\mathrm m,n}(h))-
\ell_n^{\mathrm m}(\vartheta_{\mathrm m})
=
h^\top\Xi_n^{\mathrm m}
-\frac12 h^\top\Gamma_{\mathrm m}^{\mathrm{crit}}h
+o_{P_{\vartheta_{\mathrm m}}}(1).
\]
In particular, the mfBm experiment is LAN at $\vartheta_{\mathrm m}$ with asymptotic information $\Gamma_{\mathrm m}^{\mathrm{crit}}$.
\end{theorem}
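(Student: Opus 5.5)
Since $\ell_n^{\mathrm m}$ is an exact Gaussian log-likelihood, I would use a second-order Taylor expansion along the segment $t\mapsto\vartheta_{\mathrm m}+t\,r_{n,\mathrm m}^{-1}h$, $t\in[0,1]$:
\[
\ell_n^{\mathrm m}(\vartheta_{\mathrm m,n}(h))-\ell_n^{\mathrm m}(\vartheta_{\mathrm m})
=(r_{n,\mathrm m}^{-1}h)^\top S_n^{\mathrm m}
+\tfrac12 (r_{n,\mathrm m}^{-1}h)^\top \nabla^2\ell_n^{\mathrm m}(\tilde\vartheta_n)\,(r_{n,\mathrm m}^{-1}h),
\]
with $S_n^{\mathrm m}=(S_{\sigma,n}^{\mathrm m},S_{H,n}^{\mathrm m})^\top$. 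For the linear term, write $r_{n,\mathrm m}^{-1}=D_n^{-1}L_n^{\mathrm m}$ with $D_n=\mathrm{diag}((n\Delta_n\Ln)^{1/2},(n\Delta_n)^{1/2}\Ln^{3/2})$. Then $(r_{n,\mathrm m}^{-1}h)^\top S_n^{\mathrm m}=h^\top (r_{n,\mathrm m}^{-1})^{\top}S_n^{\mathrm m}$, and the triangular structure has to be arranged so that $(r_{n,\mathrm m}^{-1})^\top S_n^{\mathrm m}=D_n^{-1}L_n^{\mathrm m}S_n^{\mathrm m}$. By Proposition~\ref{prop:main-transform} this equals exactly $\Xi_n^{\mathrm m}$. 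The first step is therefore bookkeeping: check the precise placement of the off-diagonal $\sigma\Ln$ entry (the displayed $r^{-1}_{n,m}$ is read so that the perturbation of $H$ is paired with $R_{H,n}^{\mathrm m}$), so that the linear term is identically $h^\top\Xi_n^{\mathrm m}$. No approximation is needed here.

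For the quadratic term, I would first handle the point $\vartheta_{\mathrm m}$ itself. For Gaussian models, $-\nabla^2\ell_n=\mathcal I_n+\text{(centered quadratic forms)}$, where $\mathcal I_n(i,j)=\tfrac12\tr(M_{i,n}M_{j,n})$ and the fluctuation part has entries $-\tfrac12 Q_n(\cdot)$ of second-derivative matrices, each with variance of order $\tr$ of squared matrices. After conjugation by the transformation, the normalized observed information is
\[
D_n^{-1}L_n^{\mathrm m}\bigl(-\nabla^2\ell_n\bigr)(L_n^{\mathrm m})^\top D_n^{-1}.
\]
Its deterministic part is $\tfrac12 D_n^{-1}\bigl[\tr(\Cmn\Cmn),\tr(\Cmn\Dmn);\tr(\Dmn\Dmn)\bigr]D_n^{-1}$, which converges to $\Gamma_{\mathrm m}^{\mathrm{crit}}$ by the same trace asymptotics used for Theorem~\ref{thm:mfbm-main}. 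Those asymptotics are the Riemann-sum reduction of the traces to $n\Delta_n\int g_\sigma^2$, $n\Delta_n\int g_\sigma g_{H,n}$ and $n\Delta_n\int g_{H,n}^2$ over $|u|\lesssim\Delta_n^{-1}$, with the logarithmic tails of Lemma~\ref{lem:tail-orders} producing the factors $\Ln,\Ln^2,\Ln^3$. The random part is $o_P(1)$, since its variance is of order the normalized trace of a squared matrix product, and this is smaller by a factor $(n\Delta_n)^{-1}$ up to logs.

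The main obstacle is uniformity: replacing $\tilde\vartheta_n$ by $\vartheta_{\mathrm m}$. The $H$-perturbation is of size $(n\Delta_n)^{-1/2}\Ln^{-3/2}$, but $\Delta_n^{2H-1}$ changes by the factor $\exp(-2\delta_H\Ln)$, which is $1+O(\delta_H\Ln)$. Moreover, third derivatives in $H$ bring additional powers of $\Ln$. I would therefore bound $\sup_{t\in[0,1]}\|D_n^{-1}L_n^{\mathrm m}(\nabla^2\ell_n(\vartheta_t)-\nabla^2\ell_n(\vartheta_{\mathrm m}))(L_n^{\mathrm m})^\top D_n^{-1}\|$. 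To do this, I would write $\Sigma_n(\vartheta_t)=\Sigma_n^{1/2}(I+E_{n,t})\Sigma_n^{1/2}$ with $\|E_{n,t}\|_{\op}=O((n\Delta_n)^{-1/2}\Ln^{-1/2})\to0$, which uses the fact that the combined $\sigma$ and $H$ shift is exactly the direction absorbed by $L_n^{\mathrm m}$. I would then expand the inverses in Neumann series and control every extra factor by operator-norm bounds times the Frobenius norms already computed. Done in the transformed coordinates, each derivative gains at most one $\Ln$ and the step gains $\Ln^{-1/2}(n\Delta_n)^{-1/2}$, so the remainder is $o_P(1)$. This higher-order remainder estimate (the appendix bounds) is where I expect the real work to lie. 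Combining the three pieces yields the stated expansion, and LAN follows from Theorem~\ref{thm:mfbm-main} together with Corollary~\ref{cor:nondeg}.
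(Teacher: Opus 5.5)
Your argument is sound, but it takes a different route from the paper. The paper never Taylor-expands $\ell_n^{\mathrm m}$ in the parameter. It writes the likelihood ratio exactly as $-\frac12\log\det(I_n+\mathcal S_n(h))-\frac12 Z_n^\top((I_n+\mathcal S_n(h))^{-1}-I_n)Z_n$, in terms of the deterministic sandwiched perturbation $\mathcal S_n(h)$. It then linearizes $\mathcal S_n(h)=B_n^{\mathrm m}(h)+\mathcal R_n^{\mathrm m}(h)$ with $\tr((\mathcal R_n^{\mathrm m}(h))^2)\to0$ (Proposition~\ref{prop:critical-linearization}) and applies the matrix expansions of Lemma~\ref{lem:matrix-taylor}. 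The linear term is therefore $\frac12Q_n(B_n^{\mathrm m}(h))=h^\top\Xi_n^{\mathrm m}$ up to $\frac12Q_n(\mathcal R_n^{\mathrm m}(h))=o_P(1)$, and the quadratic term is $\frac14\tr(B_n^{\mathrm m}(h)^2)+o(1)$.

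You instead expand along the parameter segment. This makes the linear term exactly $h^\top\Xi_n^{\mathrm m}$ and turns the quadratic term into a law of large numbers for the normalized observed information $D_n^{-1}L_n^{\mathrm m}(-\nabla^2\ell_n^{\mathrm m})(L_n^{\mathrm m})^\top D_n^{-1}$. Its mean involves the same three traces. Its fluctuation has variance controlled by terms such as $2\tr((\Dmn)^4)\le 2\|\Dmn\|_{\op}^2\tr((\Dmn)^2)$ and by traces of the squared transformed second-derivative matrices. Continuity on the shrinking segment needs the $|\beta|=3$ bounds of Proposition~\ref{prop:higher-order-derivatives}.

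The paper's route keeps every approximation deterministic. Only second derivatives of $\Sigma_n^{\mathrm m}$ enter, and randomness appears only through quadratic forms in fixed matrices. Your route must control random quadratic forms at a data-dependent intermediate point. The cleanest way is the integral remainder $\int_0^1(1-t)\,\delta^\top\nabla^2\ell_n^{\mathrm m}(\vartheta_{\mathrm m}+t\delta)\,\delta\,\dd t$, bounding $\E|Z_n^\top A Z_n|\le|\tr A|+\sqrt{2\tr(A^2)}$ for each $t$ rather than a supremum. In exchange you get consistency of the normalized observed information uniformly on a shrinking neighbourhood. That is the kind of statement needed for the feasible statistic, cf.\ the smoothness hypothesis in Lemma~\ref{lem:plugin-validity-mfbm}.

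Two small corrections. First, the displayed rate matrix factors as $r_{n,\mathrm m}^{-1}=(L_n^{\mathrm m})^\top D_n^{-1}$, not $D_n^{-1}L_n^{\mathrm m}$. Since $\sigma\Ln\,(n\Delta_n)^{-1/2}\Ln^{-3/2}=\sigma(n\Delta_n\Ln)^{-1/2}$, the identity $(r_{n,\mathrm m}^{-1})^\top=D_n^{-1}L_n^{\mathrm m}$ holds for the matrix exactly as printed, so no re-reading is needed. Second, because $\|\Dmn\|_{\op}=O(\log n)$ rather than $O(\Ln)$, your bound on $\|E_{n,t}\|_{\op}$ should read $O\bigl((n\Delta_n\Ln)^{-1/2}(1+\log n/\Ln)\bigr)$. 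This still tends to zero since $\log n=\Ln+\log(n\Delta_n)$.
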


\begin{theorem} \label{thm:mfou-lan}
Let $\ell_n^{\mathrm o}$ be the Gaussian log-likelihood of the stationary mfOU experiment and define
\[
\Xi_n^{\mathrm o}:=
\begin{pmatrix}
\dfrac{S_{\sigma,n}^{\mathrm o}}{\sqrt{n\Delta_n\Ln}}\\[0.8ex]
\dfrac{R_{H,n}^{\mathrm o}}{\sqrt{n\Delta_n}\,\Ln^{3/2}}\\[0.8ex]
\dfrac{S_{\alpha,n}^{\mathrm o}}{\sqrt{n\Delta_n}}
\end{pmatrix},
\qquad
\vartheta_{\mathrm o,n}(h):=\vartheta_{\mathrm o}+r_{n,\mathrm o}^{-1}h,
\qquad h\in\R^3.
\]
Then for every fixed $h\in\R^3$,
\[
\ell_n^{\mathrm o}(\vartheta_{\mathrm o,n}(h))-
\ell_n^{\mathrm o}(\vartheta_{\mathrm o})
=
h^\top\Xi_n^{\mathrm o}
-\frac12 h^\top\Gamma_{\mathrm o}^{\mathrm{crit}}h
+o_{P_{\vartheta_{\mathrm o}}}(1).
\]
In particular, the mfOU experiment is LAN at $\vartheta_{\mathrm o}$ with asymptotic information $\Gamma_{\mathrm o}^{\mathrm{crit}}$.
\end{theorem}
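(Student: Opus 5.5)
The proof will be a second-order Taylor expansion of the Gaussian log-likelihood along the path $t\mapsto\vartheta_{\mathrm o}+t\,r_{n,\mathrm o}^{-1}h$, following the template we would use for Theorem~\ref{thm:mfbm-lan}. Write $\theta_n(t):=\vartheta_{\mathrm o}+t\,r_{n,\mathrm o}^{-1}h$ and $\varphi(t):=\ell_n^{\mathrm o}(\theta_n(t))$. Taylor's formula with integral remainder gives
\[
\varphi(1)-\varphi(0)=\varphi'(0)+\tfrac12\varphi''(0)+\int_0^1\tfrac{(1-t)^2}{2}\varphi'''(t)\,\dd t .
\]

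\textbf{First-order term.} By the chain rule, $\varphi'(0)=h^\top (r_{n,\mathrm o}^{-1})^\top S_n^{\mathrm o}$, where $S_n^{\mathrm o}=(S^{\mathrm o}_{\sigma,n},S^{\mathrm o}_{H,n},S^{\mathrm o}_{\alpha,n})^\top$. Computing $(r_{n,\mathrm o}^{-1})^\top S_n^{\mathrm o}$ gives, in the second coordinate, $\sigma(n\Delta_nL_n)^{-1/2}S_{\sigma,n}+(n\Delta_n)^{-1/2}L_n^{-3/2}S_{H,n}$.

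This combination should reproduce the transformed score $R^{\mathrm o}_{H,n}/(\sqrt{n\Delta_n}L_n^{3/2})$ via Proposition~\ref{prop:main-transform}, since $S_{H,n}+\sigma L_nS_{\sigma,n}=R_{H,n}$. The point to check is that the rate matrix is the transpose-inverse of $L_n^{\mathrm o}$ composed with diagonal scalings. I therefore plan to write $r_{n,\mathrm o}^{-1}=(L_n^{\mathrm o})^{\top}D_n^{-1}$ (adjusting the off-diagonal normalization if necessary, as the paper's convention intends), with $D_n=\mathrm{diag}\bigl((n\Delta_nL_n)^{1/2},(n\Delta_n)^{1/2}L_n^{3/2},(n\Delta_n)^{1/2}\bigr)$. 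With this factorization, $\varphi'(0)=h^\top D_n^{-1}L_n^{\mathrm o}S_n^{\mathrm o}=h^\top\Xi_n^{\mathrm o}$ exactly.

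\textbf{Second-order term.} I will decompose $\varphi''(0)=-h^\top D_n^{-1}L_n^{\mathrm o}\,\mathcal I_n\,(L_n^{\mathrm o})^\top D_n^{-1}h+(\text{centered quadratic forms})$. Here $\mathcal I_n=\tfrac12\bigl(\tr(M_{i,n}M_{j,n})\bigr)_{i,j}$ is the Fisher information, and the centered part comes from $\partial_{ij}\Sigma_n$ and the products $\Sigma_n^{-1}\partial_i\Sigma_n\Sigma_n^{-1}\partial_j\Sigma_n$.

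The deterministic part equals $\tfrac12\tr$ of products of the transformed matrices $\Con,\Don,\Aon$, normalized by $D_n$. By the trace asymptotics underlying Theorem~\ref{thm:mfou-main} (the variances of the normalized scores converge to $\Gamma^{\mathrm{crit}}_{\mathrm o}$), it converges to $h^\top\Gamma_{\mathrm o}^{\mathrm{crit}}h$. This includes the vanishing of the cross traces $\tr(\Con\Aon)$ and $\tr(\Don\Aon)$ after normalization, because $g^{\mathrm o}_\alpha$ decays like $|u|^{-2}$ while the $(\sigma,H)$ profiles live on the $|u|^{-1}$ scale.

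The stochastic part is a centered Gaussian quadratic form $Q_n(B_n)$. Its variance is $2\|B_n\|_\F^2$, which I will bound by $\|B_n\|_{\op}^2$ times the rank-type trace bound. The target is $\|B_n\|_\F=o(1)$ after normalization, obtained by showing that each normalized Hessian entry has Frobenius norm of the order of the normalized traces divided by an extra factor $\sqrt{n\Delta_n}$ or $L_n^{1/2}$.

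\textbf{Main obstacle: the remainder.} The hardest step is the third-derivative remainder. It must be controlled uniformly over $t\in[0,1]$, that is, over a shrinking neighbourhood in which $H$ moves by $O((n\Delta_n)^{-1/2}L_n^{-3/2})$ and $\sigma$ moves by $O((n\Delta_nL_n)^{-1/2})$.

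The difficulty is that $\partial_H\Sigma_n$ carries the factor $\log\Delta_n$. Third derivatives therefore produce powers $L_n^3$ together with the $\log|u|$ weights, and the cancellation exploited in the first transformation must persist to third order.

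My approach is to express $\varphi'''(t)$ entirely through the transformed directions: differentiate along $(L_n^{\mathrm o})^\top D_n^{-1}h$ directly, so that the singular $\sigma L_n$ coupling cancels inside each derivative of $\Sigma_n$. I will then bound
\[
|\varphi'''(t)|\le C\bigl(\|Z\|^2+\tr\bigr)\sup_t\|\Sigma_n^{-1/2}\partial^3\Sigma_n\Sigma_n^{-1/2}\|
\]
using the operator-norm bounds for Toeplitz matrices with symbols $f_H$ and $\partial_H^kf_H$. These symbols are locally uniform in $H$ near $3/4$, with the $\alpha$-derivatives being smooth. The aim is an $O_P\bigl((n\Delta_n)^{-1/2}\,\mathrm{polylog}\bigr)$ bound, which tends to zero because $n\Delta_n\to\infty$ dominates any power of $L_n$. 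I expect this polylog bookkeeping, especially ensuring that the $\tr$ of third-order products grows only like $n\Delta_n L_n^{k}$, to be the technical core, deferred to the appendix remainder bounds.
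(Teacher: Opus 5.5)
Your first- and second-order steps are correct, and exact. Since $\sigma L_n(n\Delta_n)^{-1/2}L_n^{-3/2}=\sigma(n\Delta_nL_n)^{-1/2}$, one has $r_{n,\mathrm o}^{-1}=(L_n^{\mathrm o})^\top D_n^{-1}$ with your diagonal $D_n$ and no adjustment, so $\varphi'(0)=h^\top\Xi_n^{\mathrm o}$. Also $\varphi''(0)=-\tfrac12\tr(B^2)+\tfrac12Q_n(\ddot S_0)-Q_n(B^2)$ with $B=B_n^{\mathrm o}(h)$. Its centered part vanishes by $\tr(B^4)\le\|B\|_{\mathrm{op}}^2\tr(B^2)$ and the direct Frobenius bound $\|\ddot S_0\|_{\F}^2=O((n\Delta_n)^{-1})$ from Proposition~\ref{prop:critical-linearization}; an operator-norm-times-rank bound would not suffice there.

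The genuine gap is the third-order remainder, which you identify as the core but handle with an argument that fails. Your bound on $|\varphi'''(t)|$, namely $C(\|Z\|^2+\tr)$ times an operator norm, pairs that norm with $\|Z\|^2\asymp n$, not $n\Delta_n$. It also discards the exact cancellation between $-\tfrac12\tr(\Sigma_t^{-1}\dddot\Sigma_t)$ and the mean of $\tfrac12X^\top\Sigma_t^{-1}\dddot\Sigma_t\Sigma_t^{-1}X$. The cubic drift term alone then contributes $n\,\delta_{\alpha,n}^3\|N^{\mathrm o}_{(\alpha\alpha\alpha),n}\|_{\mathrm{op}}$, of order $n(n\Delta_n)^{-3/2}=(n\Delta_n^3)^{-1/2}$. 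This diverges, for instance, when $\Delta_n=n^{-1/2}$, a design the paper itself uses. Moreover, your closing step, that $n\Delta_n\to\infty$ dominates any power of $L_n$, is false under the paper's assumptions. Taking $\Delta_n=\log\log n/n$ gives $n\Delta_n=\log\log n$ while $L_n\sim\log n$. So a bound of the form $(n\Delta_n)^{-1/2}\mathrm{polylog}(L_n)$ need not vanish. The logarithms must cancel exactly against the factors $L_n^{-1/2}$ and $L_n^{-3/2}$ in $r_{n,\mathrm o}^{-1}$: each $H$-derivative adds $L_n^2$ to $\tr(N^2)$ in Proposition~\ref{prop:higher-order-derivatives}, and each $H$-shift removes $L_n^3$ through $\delta_{H,n}^2$.

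To close your route you need the centered structure. Let $\Sigma_t:=\Sigma_n^{\mathrm o}(\theta_n(t))$, let $\dot S_t,\ddot S_t,\dddot S_t$ be its path derivatives sandwiched by $\Sigma_t^{-1/2}$, and let $Z_t:=\Sigma_t^{-1/2}X_n^{\mathrm o}$. Then
\[
\varphi'''(t)=2\tr(\dot S_t^3)-\tfrac32\tr(\dot S_t\ddot S_t)+Z_t^\top M_tZ_t-\tr(M_t),
\qquad
M_t=3\dot S_t^3-\tfrac32(\dot S_t\ddot S_t+\ddot S_t\dot S_t)+\tfrac12\dddot S_t.
\]
The traces are bounded by $\|\dot S_t\|_{\mathrm{op}}\|\dot S_t\|_{\F}^2$ and $\|\dot S_t\|_{\F}\|\ddot S_t\|_{\F}$. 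Under $P_{\vartheta_{\mathrm o}}$ one has $Z_t\sim\mathcal N(0,G_t)$ with $G_t=\Sigma_t^{-1/2}\Sigma_n^{\mathrm o}(\vartheta_{\mathrm o})\Sigma_t^{-1/2}$. The quadratic part therefore has $L^1$ norm at most $\|M_t\|_{\F}(\|G_t-I_n\|_{\F}+\sqrt2\|G_t\|_{\mathrm{op}})$, uniformly in $t$, and Fubini handles the integral over $t$. With $\|\ddot S_t\|_{\F}^2=O((n\Delta_n)^{-1})$ and $\|\dddot S_t\|_{\F}^2=O((n\Delta_n)^{-2})$ (Corollary~\ref{cor:cubic-remainder}, uniformly along the segment), the remainder is $o_P(1)$ without comparing $n\Delta_n$ with powers of $L_n$.

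This also shows what the paper's route buys. It writes the likelihood ratio exactly as $-\tfrac12\log\det(I_n+\mathcal S_n(h))-\tfrac12Z_n^\top((I_n+\mathcal S_n(h))^{-1}-I_n)Z_n$ and applies Lemma~\ref{lem:matrix-taylor}. It then needs only $\mathcal S_n=B_n^{\mathrm o}(h)+\mathcal R_n^{\mathrm o}(h)$, with all remainders controlled by $\|\mathcal S_n\|_{\mathrm{op}}\tr(\mathcal S_n^2)$ and $\tr(\mathcal S_n^6)$ under the fixed law of $Z_n$. It never differentiates $\ell_n^{\mathrm o}$ three times and never changes the reference measure.
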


\section{Technical proof of the score CLTs} \label{sec:proofofcltscore}
This section establishes the score central limit theorems announced above. The argument follows the logic already suggested by the critical decomposition: the Toeplitz structure first converts the relevant quadratic forms into explicit trace asymptotics, and those asymptotics are then paired with a Gaussian quadratic-form CLT to obtain the limiting normal laws.
\subsection{Critical trace asymptotics}\label{sec:trace-asymptotics}

\begin{proposition}\label{prop:mfbm-traces}
Consider the mixed fractional Brownian motion model at the critical point
\[
\vartheta_{\mathrm m}=(\sigma,3/4),\qquad \sigma>0,
\]
under the high-frequency asymptotic regime
\[
\Delta_n\downarrow 0,\qquad n\Delta_n\to\infty,\qquad L_n:=\log(1/\Delta_n).
\]
Then the following trace asymptotics hold:
\begin{align}
\tr\bigl((\Cmn)^2\bigr) &\sim \Xi_{\sigma\sigma}^{\mathrm m}\,n\Delta_n\Ln, \label{eq:mfbm-c2}\\
\tr(\Cmn\Dmn) &\sim \Xi_{\sigma H}^{\mathrm m}\,n\Delta_n\Ln^2, \label{eq:mfbm-cd}\\
\tr\bigl((\Dmn)^2\bigr) &\sim \Xi_{HH}^{\mathrm m}\,n\Delta_n\Ln^3, \label{eq:mfbm-d2}
\end{align}
and
\begin{equation}\label{eq:mfbm-op}
\norm{\Cmn}_{\op}=O(1),
\qquad
\norm{\Dmn}_{\op}=O(\log n).
\end{equation}
Here and throughout, the notation is as in Section~2.
\end{proposition}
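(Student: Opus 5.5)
The plan is to reduce every trace to a Riemann integral of the critical profiles over the rescaled frequency $u=\lambda/\Delta_n$, with $L_n$ tracking the logarithmic divergences. Write $\Sigma_n^{\mathrm m}=\Delta_n\Toe{\phi_n}$ with symbol $\phi_n(\lambda)=1+\sigma^2\Delta_n^{1/2}f_{3/4}(\lambda)$. Then
\[
\partial_\sigma\Sigma_n^{\mathrm m}=\Delta_n\Toe{2\sigma\Delta_n^{1/2}f_{3/4}},
\]
and $\partial_H\Sigma_n^{\mathrm m}$ has symbol $\sigma^2\Delta_n^{1/2}f_{3/4}\bigl(-2\Ln+\partial_H\log f_H\bigr)$. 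Recombining this with $\sigma\Ln\,\partial_\sigma\Sigma$ shows that $\Dmn$ corresponds to the symbol $\sigma^2\Delta_n^{1/2}f_{3/4}\bigl(\beta_{\mathrm m}-2\log|\lambda|+\rho_{\mathrm m}\bigr)$.

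The first step is a Toeplitz trace approximation of Szeg\H{o}/Taniguchi type for products $\Toe{\phi}^{-1}\Toe{a}\Toe{\phi}^{-1}\Toe{b}$. It should give
\[
\tr\bigl(\Sigma^{-1}\partial_a\Sigma\,\Sigma^{-1}\partial_b\Sigma\bigr)=\frac{n}{2\pi}\int_{-\pi}^{\pi}\frac{a(\lambda)b(\lambda)}{\phi_n(\lambda)^2}\,\dd\lambda\,(1+o(1)).
\]
This is where the operator bounds \eqref{eq:mfbm-op} are needed. For $\Cmn$, note that $0\le\partial_\sigma\Sigma\le(2/\sigma)\Sigma$ in the Loewner order, since the fractional part of the covariance is dominated by the full covariance. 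Hence $\norm{\Cmn}_{\op}\le 2/\sigma$ exactly.

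For $\Dmn$, the logarithmic factor $\beta_{\mathrm m}-2\log|\lambda|$ is bounded on $[2\pi/n,\pi]$ by $O(\log n)$. I would sandwich $|\partial_H\Sigma+\sigma\Ln\partial_\sigma\Sigma|$ between Toeplitz forms whose symbols are $O(\log n)$ times $\sigma^2\Delta_n^{1/2}f_{3/4}$, with a small $\lambda$-cutoff at scale $1/n$. This comparison with a symbol singular at $0$ needs care. I would use the Gaussian-process representation: the bilinear form is $\int |\hat x(\lambda)|^2(\cdots)\,\dd\lambda$, and $|\log|\lambda||$ exceeds $\log n$ only on a set carrying negligible mass of $\Toe{f}$ at finite $n$. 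That set can be handled by a direct variance bound, accepting an $O(\log n)$ rather than a sharp constant.

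The second step is the computation itself. Substitute $\lambda=\Delta_n u$ in $\frac{n}{2\pi}\int\frac{ab}{\phi_n^2}\,\dd\lambda$. Using \eqref{eq:mfbm-lowfreq}, $\sigma^2\Delta_n^{1/2}f_{3/4}(\Delta_n u)\to\eta_{\mathrm m}|u|^{-1/2}$, so the ratio $a/\phi_n$ becomes the profile $g_\sigma^{\mathrm m}(u)$. For the $H$-direction, $-2\log|\lambda|=2\Ln-2\log|u|$ gives $g_{H,n}^{\mathrm m}(u)$. The traces become
\[
\frac{n\Delta_n}{2\pi}\int_{|u|\le\pi/\Delta_n}g_a g_b\,\dd u\,(1+o(1)).
\]
By Lemma~\ref{lem:tail-orders} the integrands behave like $|u|^{-1}$ times a polynomial in $\Ln-\log|u|$ of degree $0,1,2$. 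Integrating over $1\le|u|\le\pi/\Delta_n$ with $t=\log|u|\in[0,\Ln]$ gives $\int_0^{\Ln}(2\Ln-2t)^k\,\dd t=2^k\Ln^{k+1}/(k+1)$, which produces the powers $\Ln,\Ln^2,\Ln^3$. Multiplying by $2\eta_{\mathrm m}^2$ (the two signs of $u$), the prefactors $4/\sigma^2$, $2/\sigma$, $1$, and $1/(2\pi)$ yields $\Xi_{\sigma\sigma}^{\mathrm m}=4\sigma^2K_{\mathrm m}^2/\pi$ and the other two entries of \eqref{eq:xi-m-main}. The region $|u|\le1$ and the constant $\beta_{\mathrm m}$ contribute only lower powers of $\Ln$.

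The main obstacle is justifying the Toeplitz trace approximation uniformly as the symbol degenerates with $n$. The symbol $\phi_n$ is close to $1$ except on $|\lambda|\lesssim\Delta_n$, and the contributing range $\Delta_n\lesssim|\lambda|\le\pi$ is exactly where the logarithm accumulates. I would first try to control $\Toe{\phi_n}^{-1}-\Toe{1/\phi_n}$ in Hilbert--Schmidt norm against $\sqrt{n\Delta_n\Ln^3}$, using that $\phi_n-1$ has small $\ell^2$ Fourier energy away from the origin. The remaining error is then bounded by Cauchy--Schwarz, and the condition $n\Delta_n\to\infty$ places the low-frequency scale $\Delta_n$ well above the grid spacing $1/n$. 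The $o(\lambda)$ remainders $r_{\mathrm m},\rho_{\mathrm m}$ are handled by dominated convergence on each $t$-slice.
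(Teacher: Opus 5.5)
Your proposal is correct and takes essentially the paper's route. A Toeplitz trace reduction (cited in the paper, deferred by you, and rightly flagged as the load-bearing step) turns each trace into $\frac{n\Delta_n}{2\pi}\int g_ag_b\,\dd u$ in $u=\lambda/\Delta_n$; the region $\abs{u}\le1$ is lower order; and $\int_0^{\Ln}(2\Ln-2t)^k\,\dd t=2^k\Ln^{k+1}/(k+1)$ reproduces the constants in \eqref{eq:xi-m-main}.

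Your operator-norm arguments are more explicit than the paper's, since the Loewner bound gives $\norm{\Cmn}_{\op}\le 2/\sigma$ outright. For $\Dmn$, however, put the cutoff at $\abs{\lambda}<n^{-2}$ rather than $1/n$. The band $\abs{\lambda}<1/n$ is not negligible: it carries a fixed fraction of $x^\top\Toe{f_{3/4}}x$ for the constant vector. Below $n^{-2}$, the bound $\abs{\sum_k x_ke^{\mathrm{i}k\lambda}}^2\le n\norm{x}^2$ makes the excluded part $O(\Delta_n^{1/2}\log n)\norm{x}^2$, and $\abs{\log\abs{\lambda}}\le 2\log n$ on the rest, so the $O(\log n)$ bound still holds.
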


\begin{proof}
At \(H=3/4\), after the low frequency rescaling \(\lambda=\Delta_n u\), the critical profiles corresponding to \(\Cmn\) and \(\Dmn\) are \(g_\sigma^{\mathrm m}(u)\) and \(g_{H,n}^{\mathrm m}(u)\). Therefore,
\[
\tr\bigl((\Cmn)^2\bigr),\qquad \tr(\Cmn\Dmn),\qquad \tr\bigl((\Dmn)^2\bigr)
\]
are reduced to the corresponding one-dimensional critical integrals via the standard Toeplitz trace reduction (see \cite{Cai2026}). \\
For \eqref{eq:mfbm-c2}, split the integral into $\abs{u}\le1$ and $1\le\abs{u}\le 1/\Delta_n$. The bounded part is $O(1)$. By lemma \ref{lem:tail-orders},
\[
\int_1^{1/\Delta_n}\bigl(g_{\sigma}^{\mathrm m}(u)\bigr)^2\,\dd u
\sim \frac{4\eta_{\mathrm m}^2}{\sigma^2}\int_1^{1/\Delta_n}u^{-1}\,\dd u
=\frac{4\eta_{\mathrm m}^2}{\sigma^2}\Ln.
\]
Doubling by evenness and multiplying by $n\Delta_n/(2\pi)$ gives
\[
\int_{-1/\Delta_n}^{1/\Delta_n}\bigl(g_{\sigma}^{\mathrm m}(u)\bigr)^2\,\dd u
\sim \frac{8\eta_{\mathrm m}^2}{\sigma^2}\Ln,
\]
hence \eqref{eq:mfbm-c2}.\\
For \eqref{eq:mfbm-cd}, the bounded region contributes only $O(\Ln)$ and is therefore negligible after division by $\Ln^2$. On $[1,1/\Delta_n]$,
\[
\int_1^{1/\Delta_n}g_{\sigma}^{\mathrm m}(u)g_{H,n}^{\mathrm m}(u)\,\dd u
\sim \frac{2\eta_{\mathrm m}^2}{\sigma}
\int_1^{1/\Delta_n}u^{-1}\bigl(2\Ln+\beta_{\mathrm m}-2\log u\bigr)\,\dd u.
\]
With $t=\log u$,
\[
\int_1^{1/\Delta_n}u^{-1}\bigl(2\Ln+\beta_{\mathrm m}-2\log u\bigr)\,\dd u
=\int_0^{\Ln}(2\Ln+\beta_{\mathrm m}-2t)\,\dd t
=\Ln^2+\beta_{\mathrm m}\Ln.
\]
After doubling by evenness, \eqref{eq:mfbm-cd} follows.\\
For \eqref{eq:mfbm-d2},
\[
\int_1^{1/\Delta_n}\bigl(g_{H,n}^{\mathrm m}(u)\bigr)^2\,\dd u
\sim \eta_{\mathrm m}^2\int_1^{1/\Delta_n}u^{-1}\bigl(2\Ln+\beta_{\mathrm m}-2\log u\bigr)^2\,\dd u.
\]
Again $t=\log u$ gives
\begin{align*}
\int_1^{1/\Delta_n}u^{-1}\bigl(2\Ln+\beta_{\mathrm m}-2\log u\bigr)^2\,\dd u
&=\int_0^{\Ln}(2\Ln+\beta_{\mathrm m}-2t)^2\,\dd t\\
&=\frac{4}{3}\Ln^3+2\beta_{\mathrm m}\Ln^2+\beta_{\mathrm m}^2\Ln.
\end{align*}
Hence the leading even integral has coefficient $8\eta_{\mathrm m}^2/3$, which yields \eqref{eq:mfbm-d2}.

For $C_n^m$, the operator bound follows from the boundedness of the corresponding reduced symbol. For $D_n^m$, however, the reduced profile $g_{H,n}$ is not bounded near the origin because of the logarithmic term. Hence the bound $||D_n^m||_{op}=O(\log n)$ requires a separate argument at the level of the original sandwiched symbol, which we omit here and the proof follows the similar argument as in \cite{Cai2026,Cai2026mfou}.
\end{proof}
\begin{proposition}\label{prop:mfou-traces}
Consider the mixed fractional Ornstein--Uhlenbeck model at the critical point
\[
\vartheta_{\mathrm o}=(\sigma,3/4,\alpha),\qquad \sigma>0,\ \alpha>0,
\]
under the high-frequency asymptotic regime
\[
\Delta_n\downarrow 0,\qquad n\Delta_n\to\infty,\qquad L_n:=\log(1/\Delta_n).
\]
Then the following trace asymptotics hold:
\begin{align}
\tr\bigl((\Con)^2\bigr) &\sim \Xi_{\sigma\sigma}^{\mathrm o}\,n\Delta_n\Ln, \label{eq:mfou-c2}\\
\tr(\Con\Don) &\sim \Xi_{\sigma H}^{\mathrm o}\,n\Delta_n\Ln^2, \label{eq:mfou-cd}\\
\tr\bigl((\Don)^2\bigr) &\sim \Xi_{HH}^{\mathrm o}\,n\Delta_n\Ln^3, \label{eq:mfou-d2}\\
\tr\bigl((\Aon)^2\bigr) &\sim \Xi_{\alpha\alpha}^{\mathrm o}\,n\Delta_n, \label{eq:mfou-a2}
\end{align}
and
\begin{equation}\label{eq:mfou-op}
\norm{\Con}_{\op}=O(1),\qquad
 \norm{\Don}_{\op}=O(\log n),\qquad
\norm{\Aon}_{\op}=O(1).
\end{equation}
Moreover,
\begin{equation}\label{eq:mfou-alpha-mixed}
\tr(\Con\Aon)=O(n\Delta_n),
\qquad
\tr(\Don\Aon)=O(n\Delta_n\Ln).
\end{equation}
Here and throughout, the notation is as in Section~2.
\end{proposition}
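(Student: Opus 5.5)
The argument mirrors the proof of Proposition~\ref{prop:mfbm-traces}. The $\alpha$-direction and the mixed traces are handled separately, since their profile $g_\alpha^{\mathrm o}$ has a different tail. The plan has three steps.

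\emph{Step 1: reduction to profiles.} First I would invoke the Toeplitz trace reduction for the stationary mfOU covariance (as in \cite{Cai2026mfou}), with the rescaling $\lambda=\Delta_n u$. Each trace $\tr(M_1M_2)$ for $M_1,M_2\in\{\Con,\Don,\Aon\}$ is then written as
\[
\frac{n\Delta_n}{2\pi}\int_{-1/\Delta_n}^{1/\Delta_n} g_1(u)g_2(u)\,\dd u\,(1+o(1))+\text{remainder}.
\]
Here the profiles are $g_\sigma^{\mathrm o}$, $g_{H,n}^{\mathrm o}$ and $g_\alpha^{\mathrm o}$ from \eqref{eq:sigma-profiles}--\eqref{eq:alpha-profile}. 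The justification for the $\sigma$ and $H$ profiles is the one already sketched before \eqref{eq:kappa-o}. The OU transfer factor $|\alpha+iu|^{-2}$ multiplies both the Brownian and fractional parts of the spectral density, so it cancels in the sandwiched symbols $f^{-1}\partial_\sigma f$ and $f^{-1}\partial_H f$. These symbols therefore coincide at leading order with their mfBm counterparts, with $K_{\mathrm o}=K_{\mathrm m}$ and $\beta_{\mathrm o}=\beta_{\mathrm m}$. By contrast, $f^{-1}\partial_\alpha f=\partial_\alpha\log|\alpha+iu|^{-2}=-2\alpha/(\alpha^2+u^2)$, which is exactly $g_\alpha^{\mathrm o}$.

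\emph{Step 2: the integrals.} Given Step 1, \eqref{eq:mfou-c2}--\eqref{eq:mfou-d2} follow verbatim from the computation in Proposition~\ref{prop:mfbm-traces}, using Lemma~\ref{lem:tail-orders} and the integrals $\int_0^{\Ln}(2\Ln+\beta-2t)^k\,\dd t$ for $k=0,1,2$. For \eqref{eq:mfou-a2}, $g_\alpha^{\mathrm o}$ is integrable, and
\[
\int_{\R}\frac{4\alpha^2}{(\alpha^2+u^2)^2}\,\dd u=\frac{2\pi}{\alpha}.
\]
Multiplying by $n\Delta_n/(2\pi)$ gives $n\Delta_n/\alpha$. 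For the mixed terms \eqref{eq:mfou-alpha-mixed}, $|g_\sigma^{\mathrm o}|\le 2/\sigma$ and $|g_{H,n}^{\mathrm o}(u)|\lesssim \Ln+|\log|u||$. Since $\int(1+|\log|u||)\,|g_\alpha^{\mathrm o}|\,\dd u<\infty$, we obtain $O(n\Delta_n)$ and $O(n\Delta_n\Ln)$ respectively. This is the statement that the $\alpha$-block decouples at leading order.

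\emph{Step 3: operator norms and remainders.} The bounds $\norm{\Con}_{\op}=O(1)$ and $\norm{\Aon}_{\op}=O(1)$ follow from the boundedness of the sandwiched symbols: $0\le g_\sigma\le 2/\sigma$ and $|g_\alpha|\le 2/\alpha$. The bound for $\Don$ comes from bounding its unreduced symbol. The term $-2\log|\lambda|$ is at most $O(\log n)$ on the Fourier grid, because frequencies below $1/n$ are not resolved, and $\Ln\le\log n$ when $n\Delta_n\to\infty$.

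I expect the main obstacle to be controlling the reduction remainder uniformly. Near $u=0$ the logarithmic singularity of $g_{H,n}^{\mathrm o}$ interacts with the aliasing sum in $f_\Delta^{\mathrm o}$. I would first try the approach of \cite{Cai2026,Cai2026mfou}: bound the remainder by $O(n\Delta_n\Ln^2)$ for the $DD$ term and $O(n\Delta_n\Ln)$ for the $CD$ term, using Fejér-kernel estimates. These bounds are negligible relative to $\Ln^3$ and $\Ln^2$ respectively.
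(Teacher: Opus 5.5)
Your proposal is correct and follows essentially the same route as the paper: reduce each trace to $\frac{n\Delta_n}{2\pi}\int g_1g_2\,\dd u$ via the Toeplitz trace reduction, reuse the mfBm logarithmic integrals for the $(\sigma,H)$ block, compute $\int_{\R}(g_\alpha^{\mathrm o})^2\,\dd u=2\pi/\alpha$ explicitly, and obtain the mixed traces and operator bounds from integrability and sup bounds of the profiles. The paper also omits the argument for the $\Don$ operator bound, so your heuristic there is at the same level, and two of your details improve on its sketch: you explain that the OU transfer factor cancels in $f^{-1}\partial_\sigma f$ and $f^{-1}\partial_H f$ while $\partial_\alpha\log|\alpha+iu|^{-2}$ produces $g_\alpha^{\mathrm o}$, and you dominate $g_{H,n}^{\mathrm o}$ near $u=0$ by $\Ln+|\log|u||$ rather than by a constant.
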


\begin{proof}
For the $(\sigma,H)$-block, the critical rescaling of the sampled mfOU spectral density yields the same integrals as in the mfBm case (see \cite{Cai2026mfou}), with $\eta_{\mathrm m}$ replaced by $\eta_{\mathrm o}$ (see Appendix~\ref{app:profiles} for the explicit low frequency reduction). This proves \eqref{eq:mfou-c2}--\eqref{eq:mfou-d2}.\\
For the drift score,
\[
\int_{\R}\bigl(g_{\alpha}^{\mathrm o}(u)\bigr)^2\,\dd u
=4\alpha^2\int_{\R}\frac{\dd u}{(\alpha^2+u^2)^2}
=\frac{2\pi}{\alpha},
\]
which gives \eqref{eq:mfou-a2} after multiplication by $n\Delta_n/(2\pi)$.\\
For the mixed drift traces, $g_{\sigma}^{\mathrm o}$ is bounded near the origin and behaves like $\abs{u}^{-1/2}$ at infinity, whereas $g_{\alpha}^{\mathrm o}(u)$ behaves like $\abs{u}^{-2}$. Hence
\[
\int_{\R}\abs{g_{\sigma}^{\mathrm o}(u)g_{\alpha}^{\mathrm o}(u)}\,\dd u<\infty,
\]
so $\tr(\Con\Aon)=O(n\Delta_n)$. Likewise,
\[
\int_{\R}\abs{g_{H,n}^{\mathrm o}(u)g_{\alpha}^{\mathrm o}(u)}\,\dd u
\le C(1+\Ln)\int_{\R}\frac{\dd u}{1+u^2}<\infty,
\]
which yields $\tr(\Don\Aon)=O(n\Delta_n\Ln)$. The operator bounds in \eqref{eq:mfou-op} follow from the same profile bounds as in the mfBm case.
\end{proof}

\subsection{CLT for the transformed score vector}\label{sec:critical-clts}
The proof is based on the operator/Frobenius criterion for Gaussian quadratic forms.

\begin{lemma} \label{lem:qf-clt}
Let $A_n$ be real symmetric matrices and let $Z_n\sim \mathcal{N}(0,I_n)$. If
\[
\tr(A_n^2)\to\tau^2\in(0,\infty),
\qquad
\frac{\norm{A_n}_{\op}}{\norm{A_n}_{\F}}\to0,
\]
then
\[
\frac12Q_n(A_n)\dto \mathcal{N}\!\left(0,\frac{\tau^2}{2}\right).
\]
\end{lemma}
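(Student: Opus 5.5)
The plan is to diagonalize the quadratic form and reduce the statement to a Lindeberg-type CLT for a weighted sum of independent centered chi-square variables. Since $A_n$ is real symmetric, write $A_n=U_n^\top\Lambda_nU_n$ with $U_n$ orthogonal and $\Lambda_n=\mathrm{diag}(\lambda_{n,1},\dots,\lambda_{n,n})$. Rotation invariance of the standard Gaussian gives $\tilde Z_n:=U_nZ_n\sim\mathcal N(0,I_n)$, so
\[
\frac12Q_n(A_n)=\frac12\sum_{j=1}^n\lambda_{n,j}\bigl(\tilde Z_{n,j}^2-1\bigr),
\]
a row sum of a triangular array of independent, mean-zero variables. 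The exact variance is $\frac14\sum_j\lambda_{n,j}^2\cdot 2=\frac12\tr(A_n^2)\to\tau^2/2$, which already identifies the limiting variance.

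For asymptotic normality I would use the cumulant method, which is cleanest here. The $k$-th cumulant of $Z_n^\top A_nZ_n$ equals $2^{k-1}(k-1)!\,\tr(A_n^k)$. For $k\ge3$,
\[
\abs{\tr(A_n^k)}\le\norm{A_n}_{\op}^{k-2}\tr(A_n^2)=\norm{A_n}_{\op}^{k-2}\norm{A_n}_{\F}^2 .
\]
Since $\norm{A_n}_{\F}^2=\tr(A_n^2)\to\tau^2$ is bounded and $\norm{A_n}_{\op}/\norm{A_n}_{\F}\to0$, we get $\norm{A_n}_{\op}\to0$. Hence all cumulants of order $k\ge3$ vanish in the limit, while the second cumulant of $\frac12Q_n(A_n)$ tends to $\tau^2/2$ and the first is zero. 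Because the normal law is determined by its moments, convergence of cumulants (equivalently, of moments) gives $\frac12Q_n(A_n)\dto\mathcal N(0,\tau^2/2)$.

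As a cross-check, or as an alternative route, the Lyapunov condition can be verified directly. We have $\sum_j\E\abs{\lambda_{n,j}(\tilde Z_j^2-1)}^4\le C\sum_j\lambda_{n,j}^4\le C\max_j\lambda_{n,j}^2\sum_j\lambda_{n,j}^2=C\norm{A_n}_{\op}^2\norm{A_n}_{\F}^2\to0$, which is what the Lyapunov CLT needs once the variance is bounded away from zero. The only point needing care is the case $\tau^2>0$ with normalization: the hypothesis gives a fixed positive limit, so no rescaling is required. The only real obstacle is noting that $\norm{A_n}_{\op}\to0$ follows from the ratio condition combined with bounded Frobenius norm. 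In the paper's applications, the $A_n$ will be linear combinations of the normalized matrices $\Cmn/\sqrt{n\Delta_n\Ln}$ and $\Dmn/(\sqrt{n\Delta_n}\Ln^{3/2})$ (via Cramér–Wold). There the ratio condition follows from \eqref{eq:mfbm-op} against the Frobenius growth in Proposition~\ref{prop:mfbm-traces}, provided $\log n=o(\sqrt{n\Delta_n\Ln})$.
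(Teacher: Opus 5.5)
Your proof is correct. After diagonalization, the bound $\abs{\tr(A_n^k)}\le\norm{A_n}_{\op}^{k-2}\tr(A_n^2)$, combined with $\norm{A_n}_{\op}\to0$, makes all cumulants of order $k\ge3$ vanish; $\norm{A_n}_{\op}\to0$ follows from the ratio condition because $\tr(A_n^2)\to\tau^2$. The second cumulant converges to $\tau^2/2$, and the Lyapunov check is an equally valid alternative.

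The paper gives no argument of its own for this lemma and only points to \cite{Cai2026}. Your diagonalization plus cumulant (or Lyapunov) route is the standard self-contained proof of this operator/Frobenius criterion.

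One minor remark concerns your closing aside, which is not part of the lemma. The proviso $\log n=o\bigl(\sqrt{n\Delta_n\Ln}\bigr)$ is stronger than needed and does not hold automatically in the regime $\Delta_n\downarrow0$, $n\Delta_n\to\infty$. For example, it fails for $\Delta_n=(\log n)/n$. The paper instead writes $\log n=\Ln+\log(n\Delta_n)$, which gives $\log n/(\sqrt{n\Delta_n}\,\Ln^{3/2})\to0$ with no extra condition.
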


\begin{proof}
 See \cite{Cai2026}.
\end{proof}

\begin{proof}[\textbf{Proof of theorem \ref{thm:mfbm-main}}]
Fix $u=(u_1,u_2)\in\R^2$ and define
\[
A_n^{\mathrm m}(u):=
\frac{u_1}{\sqrt{n\Delta_n\Ln}}\Cmn
+
\frac{u_2}{\sqrt{n\Delta_n}\,\Ln^{3/2}}\Dmn
\]
where $C_n^m$ and $D_n^m$ are defined in \eqref{eq:Cm-def} and \eqref{eq:mfbm-split}. Then
\[
\frac{u_1S_{\sigma,n}^{\mathrm m}}{\sqrt{n\Delta_n\Ln}}
+
\frac{u_2R_{H,n}^{\mathrm m}}{\sqrt{n\Delta_n}\,\Ln^{3/2}}
=\frac12Q_n\bigl(A_n^{\mathrm m}(u)\bigr).
\]
By proposition \ref{prop:mfbm-traces},
\[
\tr\Bigl(\bigl(A_n^{\mathrm m}(u)\bigr)^2\Bigr)
\to u^\top\bigl(2\Gamma_{\mathrm m}^{\mathrm{crit}}\bigr)u.
\]
Moreover,
\[
\|A_n^{\mathrm m}(u)\|_{\op}
\le
\frac{|u_1|}{\sqrt{n\Delta_nL_n}}\|C_n^{\mathrm m}\|_{\op}
+
\frac{|u_2|}{\sqrt{n\Delta_n}L_n^{3/2}}\|D_n^{\mathrm m}\|_{\op}
=
O\!\left(\frac1{\sqrt{n\Delta_nL_n}}\right)
+
O\!\left(\frac{\log n}{\sqrt{n\Delta_n}\,L_n^{3/2}}\right).
\]
Since
\[
\log n = L_n + \log(n\Delta_n),
\]
we have
\[
\frac{\log n}{\sqrt{n\Delta_n}\,L_n^{3/2}}
=
O\!\left(\frac1{\sqrt{n\Delta_nL_n}}\right)
+
\frac{\log(n\Delta_n)}{\sqrt{n\Delta_n}\,L_n^{3/2}}
=o(1),
\]
because \(n\Delta_n\to\infty\) and \(L_n\to\infty\). Therefore
\[
\|A_n^{\mathrm m}(u)\|_{\op}\to0.
\]
In addition,
\[
\|A_n^{\mathrm m}(u)\|_{\mathrm F}^2
\to
2u^\top \Gamma_{\mathrm m}^{\mathrm{crit}}u.
\] Since $\Gamma_{\mathrm m}^{\mathrm{crit}}$ is positive definite, the operator/Frobenius ratio tends to $0$ for every $u\neq0$. By lemma \ref{lem:qf-clt},
\[
\frac12Q_n\bigl(A_n^{\mathrm m}(u)\bigr)
\dto\mathcal{N}\bigl(0,u^\top\Gamma_{\mathrm m}^{\mathrm{crit}}u\bigr).
\]
The Cram\'er--Wold device proves the theorem.
\end{proof}

\begin{proof}[\textbf{Proof of theorem \ref{thm:mfou-main}}]
Fix $u=(u_1,u_2,u_3)\in\R^3$ and define
\[
A_n^{\mathrm o}(u):=
\frac{u_1}{\sqrt{n\Delta_n\Ln}}\Con
+
\frac{u_2}{\sqrt{n\Delta_n}\,\Ln^{3/2}}\Don
+
\frac{u_3}{\sqrt{n\Delta_n}}\Aon.
\]
Then
\[
\frac{u_1S_{\sigma,n}^{\mathrm o}}{\sqrt{n\Delta_n\Ln}}
+
\frac{u_2R_{H,n}^{\mathrm o}}{\sqrt{n\Delta_n}\,\Ln^{3/2}}
+
\frac{u_3S_{\alpha,n}^{\mathrm o}}{\sqrt{n\Delta_n}}
=\frac12Q_n\bigl(A_n^{\mathrm o}(u)\bigr).
\]
By proposition \ref{prop:mfou-traces}, the diagonal terms contribute $2u^\top\Gamma_{\mathrm o}^{\mathrm{crit}}u$ at the trace level. The mixed traces with $\Aon$ vanish after normalization because
\[
\frac{n\Delta_n}{\sqrt{n\Delta_n\Ln}\sqrt{n\Delta_n}}=\Ln^{-1/2}\to0,
\qquad
\frac{n\Delta_n\Ln}{\sqrt{n\Delta_n}\,\Ln^{3/2}\sqrt{n\Delta_n}}=\Ln^{-1/2}\to0.
\]
Hence
\[
\tr\Bigl(\bigl(A_n^{\mathrm o}(u)\bigr)^2\Bigr)
\to 2u^\top\Gamma_{\mathrm o}^{\mathrm{crit}}u.
\]
Also,
\[
\|A_n^{\mathrm o}(u)\|_{\op}
\le
\frac{|u_1|}{\sqrt{n\Delta_nL_n}}\|C_n^{\mathrm o}\|_{\op}
+
\frac{|u_2|}{\sqrt{n\Delta_n}L_n^{3/2}}\|D_n^{\mathrm o}\|_{\op}
+
\frac{|u_3|}{\sqrt{n\Delta_n}}\|A_n^{\mathrm o}\|_{\op}
\]
\[
=
O\!\left(\frac1{\sqrt{n\Delta_nL_n}}\right)
+
O\!\left(\frac{\log n}{\sqrt{n\Delta_n}\,L_n^{3/2}}\right)
+
O\!\left(\frac1{\sqrt{n\Delta_n}}\right)\xrightarrow[]{n\rightarrow\infty}0.
\].

Since $\Gamma_{\mathrm o}^{\mathrm{crit}}$ is positive definite, the Frobenius norm of $A_n^{\mathrm o}(u)$ stays away from $0$ for every $u\neq0$, and therefore
\[
\frac{\norm{A_n^{\mathrm o}(u)}_{\op}}{\norm{A_n^{\mathrm o}(u)}_{\F}}\to0.
\]
Lemma \ref{lem:qf-clt} and the Cram\'er--Wold device conclude the proof.
\end{proof}
\section{Technical proof of the LAN theorems}\label{sec:proofofLANTHM}
In this section we combine the CLTs for the transformed score vectors with a second-order expansion of the Gaussian likelihood ratio. We first state the matrix Taylor bounds. We then linearize the local covariance perturbation. Finally, we identify the linear score term, the quadratic information term and the remainder terms. At the critical point \(H=3/4\), a single triangular rate matrix already yields a non-degenerate quadratic term.
\subsection{Matrix Taylor expansions}\label{subsc:matrix-taylor}
Subsection \ref{subsc:matrix-taylor} collects the matrix expansions used in the LAN proof.

\begin{lemma} \label{lem:matrix-taylor}
Let $S$ be a real symmetric matrix with $\norm{S}_{\op}\le 1/2$. Then
\[
\log\det(I+S)=\tr(S)-\frac12\tr(S^2)+R_{\log}(S),
\qquad
\abs{R_{\log}(S)}\le C\norm{S}_{\op}\tr(S^2),
\]
and
\[
(I+S)^{-1}=I-S+S^2+R_{\mathrm{inv}}(S),
\qquad
R_{\mathrm{inv}}(S)=\sum_{k\ge3}(-1)^kS^k,
\]
with
\[
\norm{R_{\mathrm{inv}}(S)}_{\op}\le C\norm{S}_{\op}^3,
\qquad
\tr\bigl(R_{\mathrm{inv}}(S)^2\bigr)\le C\tr(S^6).
\]
Here $C>0$ is a universal constant.
\end{lemma}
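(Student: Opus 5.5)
The plan is to diagonalize $S$ and reduce every statement to scalar power series. Since $S$ is real symmetric, we write $S=U\Lambda U^\top$ with $U$ orthogonal and $\Lambda=\mathrm{diag}(\lambda_1,\dots,\lambda_n)$. The hypothesis $\norm{S}_{\op}\le1/2$ means $\max_j\abs{\lambda_j}\le1/2$. Then
\[
\log\det(I+S)=\sum_j\log(1+\lambda_j),\qquad \tr(S)=\sum_j\lambda_j,\qquad \tr(S^2)=\sum_j\lambda_j^2 .
\]

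For the log-determinant, we use the scalar Taylor expansion with remainder. For $\abs{x}\le1/2$ we have $\log(1+x)=x-\tfrac{x^2}{2}+r(x)$, where
\[
r(x)=\sum_{k\ge3}\frac{(-1)^{k+1}x^k}{k}.
\]
Bounding termwise gives
\[
\abs{r(x)}\le \abs{x}^3\sum_{m\ge0}\frac{(1/2)^m}{3}=\tfrac23\abs{x}^3 .
\]
Alternatively, one can use the Lagrange form $r(x)=x^3/\bigl(3(1+\xi)^3\bigr)$ with $\abs{1+\xi}\ge1/2$. Setting $R_{\log}(S)=\sum_j r(\lambda_j)$, we obtain
\[
\abs{R_{\log}(S)}\le \tfrac23\sum_j\abs{\lambda_j}^3\le\tfrac23\max_j\abs{\lambda_j}\sum_j\lambda_j^2=\tfrac23\norm{S}_{\op}\tr(S^2).
\]

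For the inverse, the Neumann series $(I+S)^{-1}=\sum_{k\ge0}(-1)^kS^k$ converges in operator norm because $\norm{S}_{\op}\le1/2<1$. This identifies $R_{\mathrm{inv}}(S)=\sum_{k\ge3}(-1)^kS^k$ exactly. The operator bound follows from the triangle inequality:
\[
\norm{R_{\mathrm{inv}}(S)}_{\op}\le\sum_{k\ge3}\norm{S}_{\op}^k=\frac{\norm{S}_{\op}^3}{1-\norm{S}_{\op}}\le2\norm{S}_{\op}^3 .
\]

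For the trace bound we work spectrally again. Here $R_{\mathrm{inv}}(S)=U\,\mathrm{diag}\bigl(\phi(\lambda_j)\bigr)U^\top$ with
\[
\phi(x)=\sum_{k\ge3}(-x)^k=\frac{-x^3}{1+x}.
\]
For $\abs{x}\le1/2$ this gives $\abs{\phi(x)}\le2\abs{x}^3$. Hence
\[
\tr\bigl(R_{\mathrm{inv}}(S)^2\bigr)=\sum_j\phi(\lambda_j)^2\le4\sum_j\lambda_j^6=4\tr(S^6).
\]
All the constants obtained ($2/3$, $2$, $4$) are universal, so we may take $C=4$.

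There is no genuine obstacle. The only point needing care is to keep the remainder bounds \emph{relative}, namely $\norm{S}_{\op}\tr(S^2)$ rather than $n\norm{S}_{\op}^3$. The spectral reduction handles this automatically, since each cubic eigenvalue term is bounded by $\max\abs{\lambda}$ times a quadratic one. This dimension-free form is exactly what will be needed later, when $\tr(S^2)$ stays bounded while $\norm{S}_{\op}\to0$ along the local perturbations.
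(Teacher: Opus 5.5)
Your proof is correct and takes the paper's approach: diagonalize $S$, then sum the scalar Taylor expansions of $\log(1+x)$ and $(1+x)^{-1}$ over eigenvalues in $[-1/2,1/2]$. Your explicit constants ($2/3$, $2$, $4$) and the step $\sum_j\abs{\lambda_j}^3\le\max_j\abs{\lambda_j}\sum_j\lambda_j^2$ simply write out the details that the paper's one-line proof leaves implicit.
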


\begin{proof}
Diagonalize $S$. Since every eigenvalue belongs to $[-1/2,1/2]$, the scalar Taylor expansions of $\log(1+x)$ and $(1+x)^{-1}$ can be summed over the spectrum term by term (see \cite{Cai2026mfou}).
\end{proof}

\subsection{Critical local perturbation}
This subsection linearizes the local covariance perturbation under the critical local parameterization at \(H=3/4\). For the local parameter shift \(\vartheta_n(h)\), we write the sandwiched perturbation \(S_n(h)\)
as its first-order part \(B_n(h)\) plus a negligible remainder. This is the key  for the
likelihood expansion in the next subsection.
\begin{proposition} \label{prop:critical-linearization}
Fix $h\in\R^2$ and set
\[
\vartheta_{\mathrm m,n}(h)=\vartheta_{\mathrm m}+r_{n,\mathrm m}^{-1}h,
\qquad
B_n^{\mathrm m}(h):=
\frac{h_1}{\sqrt{n\Delta_n\Ln}}\Cmn
+
\frac{h_2}{\sqrt{n\Delta_n}\,\Ln^{3/2}}\Dmn.
\]
Define the sandwiched covariance perturbation
\[
\mathcal S_n^{\mathrm m}(h):=
\Sigma_n^{\mathrm m}(\vartheta_{\mathrm m})^{-1/2}
\bigl(\Sigma_n^{\mathrm m}(\vartheta_{\mathrm m,n}(h))-\Sigma_n^{\mathrm m}(\vartheta_{\mathrm m})\bigr)
\Sigma_n^{\mathrm m}(\vartheta_{\mathrm m})^{-1/2}.
\]
Then
\[
\mathcal S_n^{\mathrm m}(h)=B_n^{\mathrm m}(h)+\mathcal R_n^{\mathrm m}(h),
\qquad
\norm{\mathcal S_n^{\mathrm m}(h)}_{\op}\to0,
\qquad
\tr\Bigl(\bigl(\mathcal R_n^{\mathrm m}(h)\bigr)^2\Bigr)\to0.
\]

Fix $h\in\R^3$ and set
\[
\vartheta_{\mathrm o,n}(h)=\vartheta_{\mathrm o}+r_{n,\mathrm o}^{-1}h,
\qquad
B_n^{\mathrm o}(h):=
\frac{h_1}{\sqrt{n\Delta_n\Ln}}\Con
+
\frac{h_2}{\sqrt{n\Delta_n}\,\Ln^{3/2}}\Don
+
\frac{h_3}{\sqrt{n\Delta_n}}\Aon.
\]
Define $\mathcal S_n^{\mathrm o}(h)$ analogously. Then
\[
\mathcal S_n^{\mathrm o}(h)=B_n^{\mathrm o}(h)+\mathcal R_n^{\mathrm o}(h),
\qquad
\norm{\mathcal S_n^{\mathrm o}(h)}_{\op}\to0,
\qquad
\tr\Bigl(\bigl(\mathcal R_n^{\mathrm o}(h)\bigr)^2\Bigr)\to0.
\]
\end{proposition}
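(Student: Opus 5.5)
We will Taylor expand the covariance $\Sigma_n^{\mathrm m}(\vartheta)$ along the local path $\vartheta_{\mathrm m,n}(h)$ and use the triangular rate matrix to identify the linear part with $B_n^{\mathrm m}(h)$.

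\textbf{Step 1: the increments.} Write $\delta_\sigma:=h_1(n\Delta_nL_n)^{-1/2}+\sigma h_2(n\Delta_nL_n)^{-1/2}$ and $\delta_H:=h_2(n\Delta_n)^{-1/2}L_n^{-3/2}$. Note $\sigma L_n\delta_H=\sigma h_2(n\Delta_n)^{-1/2}L_n^{-1/2}$, which matches the off-diagonal correction in $\delta_\sigma$. Hence
\[
\delta_\sigma M_{\sigma,n}+\delta_H M_{H,n}
=\frac{h_1}{\sqrt{n\Delta_nL_n}}\Cmn+\delta_H\bigl(M_{H,n}^{\mathrm m}+\sigma L_n\Cmn\bigr)
=B_n^{\mathrm m}(h),
\]
by the definition of $\Dmn$ in \eqref{eq:mfbm-split}. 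This is exactly the purpose of the rate matrix $r^{-1}_{n,\mathrm m}$. Thus the first-order Taylor term of $\mathcal S_n^{\mathrm m}(h)$ is $B_n^{\mathrm m}(h)$. Since $\Sigma_n^{\mathrm m}=\Delta_n(I_n+\sigma^2\Delta_n^{2H-1}\Toe{f_H})$ is explicit and smooth in $(\sigma,H)$, the remainder is the integral form of the second-order term:
\[
\mathcal R_n^{\mathrm m}(h)=\int_0^1(1-t)\,\Sigma^{-1/2}\,\partial_t^2\Sigma_n^{\mathrm m}(\vartheta_{\mathrm m}+t\,r_{n,\mathrm m}^{-1}h)\,\Sigma^{-1/2}\,\dd t .
\]

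\textbf{Step 2: operator norm.} First, $\|B_n^{\mathrm m}(h)\|_{\op}\to0$ follows verbatim from the proof of Theorem~\ref{thm:mfbm-main}, using \eqref{eq:mfbm-op} and the identity $\log n=L_n+\log(n\Delta_n)$. It then suffices to show $\|\mathcal R_n^{\mathrm m}(h)\|_{\op}\to0$.

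\textbf{Step 3: Hilbert--Schmidt bound on the remainder.} The main term is the $\delta_H^2$ contribution. Differentiating twice in $H$ brings down a factor $(\beta_{\mathrm m}-2\log|\lambda|+2L_n)^2$ in the critical profile, giving a profile of size $w(u)(2L_n+\beta-2\log|u|)^2$. The $\delta_\sigma^2$ and $\delta_\sigma\delta_H$ terms give analogous profiles with lower logarithmic powers.

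We must also control the uniformity of the sandwich when $\Sigma$ is evaluated at intermediate points $\vartheta_{\mathrm m}+t r_{n,\mathrm m}^{-1}h$ but sandwiched by $\Sigma(\vartheta_{\mathrm m})^{-1/2}$. Because $\delta_H L_n\to0$, the factor $\Delta_n^{2H-1}$ changes only by a multiplicative $1+o(1)$, so the two sandwiches agree up to $1+o(1)$ uniformly in $t$.

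Using the Toeplitz trace reduction as in Proposition~\ref{prop:mfbm-traces}:
\[
\tr\bigl((\partial_H^2\text{-sandwich})^2\bigr)\asymp n\Delta_n\int_1^{1/\Delta_n}u^{-1}(2L_n-2\log u+\beta)^4\,\dd u\asymp n\Delta_nL_n^5 .
\]
Multiplying by $\delta_H^4\asymp (n\Delta_n)^{-2}L_n^{-6}$ gives $O\bigl((n\Delta_nL_n)^{-1}\bigr)\to0$. Similarly, the $\delta_\sigma^2$ term contributes $\delta_\sigma^4\, n\Delta_nL_n=O\bigl((n\Delta_n L_n)^{-1}\bigr)$, and the cross term gives $\delta_\sigma^2\delta_H^2\, n\Delta_nL_n^3=O\bigl((n\Delta_n)^{-1}L_n^{-1}\bigr)$. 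Hence $\tr(\mathcal R_n^2)\to0$, and in particular $\|\mathcal R_n\|_{\op}\le\|\mathcal R_n\|_{\F}\to0$. This completes the mfBm statement.

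\textbf{mfOU.} The same argument applies with the extra coordinate $\delta_\alpha=h_3(n\Delta_n)^{-1/2}$. The second derivatives in $\alpha$ have integrable profiles of order $|u|^{-2}$, so they contribute $\delta_\alpha^4 n\Delta_n\to0$. Mixed $\alpha$ terms are handled by Cauchy--Schwarz on the corresponding traces, in the same way as \eqref{eq:mfou-alpha-mixed}.

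\textbf{Main obstacle.} The hardest step is justifying the Toeplitz trace reduction for the second-derivative sandwiches, since the profile carries powers of $L_n$ and is unbounded near the origin. This is the same issue as the omitted $\|\Dmn\|_{\op}=O(\log n)$ bound. I would control it with the exact Toeplitz symbol bound $|\partial_H^k f_H|\le C f_H(1+|\log|\lambda||)^k$ and the Frobenius inequality $\|\Sigma^{-1/2}T\Sigma^{-1/2}\|_{\F}$ estimated via the symbol ratio, following \cite{Cai2026}.
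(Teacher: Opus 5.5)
Your proposal is correct and is essentially the paper's argument. Both use a second-order Taylor expansion along $r_{n,\mathrm m}^{-1}h$, with the triangular rate matrix turning $\delta_\sigma M_{\sigma,n}^{\mathrm m}+\delta_H M_{H,n}^{\mathrm m}$ into $B_n^{\mathrm m}(h)$ via \eqref{eq:mfbm-split}. Both then bound the second-derivative sandwiches by squared Frobenius norms of order $n\Delta_n\Ln$, $n\Delta_n\Ln^3$, $n\Delta_n\Ln^5$, which gives $\tr\bigl((\mathcal R_n^{\mathrm m}(h))^2\bigr)=O\bigl((n\Delta_n\Ln)^{-1}\bigr)$, with the same bookkeeping for mfOU.

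Two of your refinements are harmless improvements. The integral-form remainder is cleaner, since the paper's single intermediate point $\widetilde\vartheta_n$ is not literally available for a matrix-valued map. The bound $\norm{\mathcal R_n^{\mathrm m}(h)}_{\op}\le\norm{\mathcal R_n^{\mathrm m}(h)}_{\F}$ makes the operator-norm half of Proposition~\ref{prop:higher-order-derivatives} unnecessary here.

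One small correction: uniformity at the intermediate points needs $\delta_H\log n\to0$, not just $\delta_H\Ln\to0$, because $f_H$ itself moves at frequencies down to order $1/n$. This holds by $\log n=\Ln+\log(n\Delta_n)$.
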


\begin{proof}
We start with mfBm. Write $\delta_n^{\mathrm m}(h)=r_{n,\mathrm m}^{-1}h$, so that
\[
\delta_{\sigma,n}^{\mathrm m}=O\bigl((n\Delta_n\Ln)^{-1/2}\bigr),
\qquad
\delta_{H,n}^{\mathrm m}=O\bigl((n\Delta_n)^{-1/2}\Ln^{-3/2}\bigr).
\]
A second-order Taylor expansion around $\vartheta_{\mathrm m}$ yields
\[
\Sigma_n^{\mathrm m}(\vartheta_{\mathrm m,n}(h))-
\Sigma_n^{\mathrm m}(\vartheta_{\mathrm m})
=
\delta_{\sigma,n}^{\mathrm m}\,\partial_\sigma\Sigma_n^{\mathrm m}
+
\delta_{H,n}^{\mathrm m}\,\partial_H\Sigma_n^{\mathrm m}
+
\frac12\sum_{i,j\in\{\sigma,H\}}\delta_{i,n}^{\mathrm m}\delta_{j,n}^{\mathrm m}
\partial_{ij}^2\Sigma_n^{\mathrm m}(\widetilde\vartheta_n),
\]
for some $\widetilde\vartheta_n$ on the segment joining $\vartheta_{\mathrm m}$ and $\vartheta_{\mathrm m,n}(h)$. Sandwiching by $\Sigma_n^{\mathrm m}(\vartheta_{\mathrm m})^{-1/2}$ and using the exact decomposition \eqref{eq:mfbm-split} gives
\[
\mathcal S_n^{\mathrm m}(h)=B_n^{\mathrm m}(h)+\mathcal R_n^{\mathrm m}(h),
\qquad
\mathcal R_n^{\mathrm m}(h)=\frac12\sum_{i,j\in\{\sigma,H\}}\delta_{i,n}^{\mathrm m}\delta_{j,n}^{\mathrm m}
M_{ij,n}^{\mathrm m}(\widetilde\vartheta_n),
\]
where
\[
M_{ij,n}^{\mathrm m}(\theta):=
\Sigma_n^{\mathrm m}(\vartheta_{\mathrm m})^{-1/2}
\partial_{ij}^2\Sigma_n^{\mathrm m}(\theta)
\Sigma_n^{\mathrm m}(\vartheta_{\mathrm m})^{-1/2}.
\]
Differentiating
\[
\Sigma_n^{\mathrm m}(\sigma,H)=\Delta_n\Bigl(I_n+\sigma^2\Delta_n^{2H-1}\Toe{f_H}\Bigr)
\]
term by term, and using the higher-order derivative bounds proved later in proposition \ref{prop:higher-order-derivatives}, we obtain uniformly on shrinking neighborhoods of $\vartheta_{\mathrm m}$
\begin{align*}
\norm{M_{\sigma\sigma,n}^{\mathrm m}}_{\op}&=O(1), &
\norm{M_{\sigma H,n}^{\mathrm m}}_{\op}&=O(\log n), &
\norm{M_{HH,n}^{\mathrm m}}_{\op}&=O((\log n)^2), \\
\tr\bigl((M_{\sigma\sigma,n}^{\mathrm m})^2\bigr)&=O(n\Delta_n\Ln), &
\tr\bigl((M_{\sigma H,n}^{\mathrm m})^2\bigr)&=O(n\Delta_n\Ln^3), &
\tr\bigl((M_{HH,n}^{\mathrm m})^2\bigr)&=O(n\Delta_n\Ln^5).
\end{align*}
Equivalently, each second-order Taylor contribution is already below the LAN scale:
\begin{align*}
(\delta_{\sigma,n}^{\mathrm m})^2\|M_{\sigma\sigma,n}^{\mathrm m}\|_{\op}
&=O\!\left((n\Delta_nL_n)^{-1}\right),\\
|\delta_{\sigma,n}^{\mathrm m}\delta_{H,n}^{\mathrm m}|\|M_{\sigma H,n}^{\mathrm m}\|_{\op}
&=O\!\left(\frac{\log n}{n\Delta_n\,L_n^{2}}\right),\\
(\delta_{H,n}^{\mathrm m})^2\|M_{HH,n}^{\mathrm m}\|_{\op}
&=O\!\left(\frac{(\log n)^2}{n\Delta_n\,L_n^{3}}\right).
\end{align*}

and, at the Frobenius level,
\begin{align*}
(\delta_{\sigma,n}^{\mathrm m})^4\tr\bigl((M_{\sigma\sigma,n}^{\mathrm m})^2\bigr)
&=O\bigl((n\Delta_n\Ln)^{-1}\bigr), \\
(\delta_{\sigma,n}^{\mathrm m}\delta_{H,n}^{\mathrm m})^2\tr\bigl((M_{\sigma H,n}^{\mathrm m})^2\bigr)
&=O\bigl((n\Delta_n\Ln)^{-1}\bigr), \\
(\delta_{H,n}^{\mathrm m})^4\tr\bigl((M_{HH,n}^{\mathrm m})^2\bigr)
&=O\bigl((n\Delta_n\Ln)^{-1}\bigr).
\end{align*}
The $HH$-term is generated by the three critical profiles
\[
\Delta_n^{3/2}\Ln^2\Toe{f_{3/4}},
\qquad
\Delta_n^{3/2}\Ln\Toe{\partial_Hf_H|_{H=3/4}},
\qquad
\Delta_n^{3/2}\Toe{\partial_{HH}^2f_H|_{H=3/4}},
\]
and the last one has low frequency order $|\lambda|^{-1/2}(1+|\log|\lambda||^2)$, so it falls under exactly the same critical trace mechanism as the first-order score matrices. Therefore
\[
\norm{\mathcal R_n^{\mathrm m}(h)}_{\op}=O\Bigl(\frac{(\log n)^2}{n\Delta_n\,L_n^{3}} \Bigr),
\qquad
\tr\Bigl(\bigl(\mathcal R_n^{\mathrm m}(h)\bigr)^2\Bigr)=O\bigl((n\Delta_n\Ln)^{-1}\bigr)\to0.
\]
Since equation \eqref{eq:mfbm-op} implies $\norm{B_n^{\mathrm m}(h)}_{\op}\to0$, we also obtain $\norm{\mathcal S_n^{\mathrm m}(h)}_{\op}\to0$.

For mfOU we write $\delta_n^{\mathrm o}(h)=r_{n,\mathrm o}^{-1}h$ and repeat the same argument. The local Taylor expansion gives
\[
\mathcal S_n^{\mathrm o}(h)=B_n^{\mathrm o}(h)+\mathcal R_n^{\mathrm o}(h),
\qquad
\mathcal R_n^{\mathrm o}(h)=\frac12\sum_{i,j\in\{\sigma,H,\alpha\}}\delta_{i,n}^{\mathrm o}\delta_{j,n}^{\mathrm o}
M_{ij,n}^{\mathrm o}(\widetilde\vartheta_n).
\]
Because the sampled mfOU symbol has the same critical $|u|^{-1/2}$ singularity as in the mfBm case, each $H$-derivative contributes one logarithmic factor, whereas $\alpha$-derivatives only act on the smooth OU correction and therefore preserve the integrability class. Thus
\begin{align*}
\norm{M_{\sigma\sigma,n}^{\mathrm o}}_{\op}&=O(1), &
\norm{M_{\sigma H,n}^{\mathrm o}}_{\op}&=O(\log n), &
\norm{M_{HH,n}^{\mathrm o}}_{\op}&=O((\log n)^2), \\
\norm{M_{\sigma\alpha,n}^{\mathrm o}}_{\op}&=O(1), &
\norm{M_{H\alpha,n}^{\mathrm o}}_{\op}&=O(\log n), &
\norm{M_{\alpha\alpha,n}^{\mathrm o}}_{\op}&=O(1),
\end{align*}
while
\begin{align*}
\tr\bigl((M_{\sigma\sigma,n}^{\mathrm o})^2\bigr)&=O(n\Delta_n\Ln), &
\tr\bigl((M_{\sigma H,n}^{\mathrm o})^2\bigr)&=O(n\Delta_n\Ln^3), &
\tr\bigl((M_{HH,n}^{\mathrm o})^2\bigr)&=O(n\Delta_n\Ln^5), \\
\tr\bigl((M_{\sigma\alpha,n}^{\mathrm o})^2\bigr)&=O(n\Delta_n), &
\tr\bigl((M_{H\alpha,n}^{\mathrm o})^2\bigr)&=O(n\Delta_n\Ln^2), &
\tr\bigl((M_{\alpha\alpha,n}^{\mathrm o})^2\bigr)&=O(n\Delta_n).
\end{align*}
Hence the pure $(\sigma,H)$ contributions are treated exactly as in mfBm, whereas the additional mixed terms satisfy
\begin{align*}
(\delta_{\sigma,n}^{\mathrm o}\delta_{\alpha,n}^{\mathrm o})\norm{M_{\sigma\alpha,n}^{\mathrm o}}_{\op}
&=O\Bigl(\frac{1}{n\Delta_n L^{1/2}_n}\Bigr), \\
(\delta_{H,n}^{\mathrm o}\delta_{\alpha,n}^{\mathrm o})\norm{M_{H\alpha,n}^{\mathrm o}}_{\op}
&=O\Bigl(\frac{\log n }{n\Delta_n L^{3/2}_n}\Bigr), \\
(\delta_{\alpha,n}^{\mathrm o})^2\norm{M_{\alpha\alpha,n}^{\mathrm o}}_{\op}
&=O\Bigl(\frac{1}{n\Delta_n  }\Bigr),
\end{align*}
with the same orders for the corresponding squared Frobenius contributions. Consequently
\[
\norm{\mathcal R_n^{\mathrm o}(h)}_{\op}=O\!\left(
\frac{
1+ {(\log n)^2}/{L_n^3}
}{n\Delta_n}
\right),
\qquad
\tr\Bigl(\bigl(\mathcal R_n^{\mathrm o}(h)\bigr)^2\Bigr)=O\bigl((n\Delta_n)^{-1}\bigr)\to0.
\]

Since equation \eqref{eq:mfou-op} gives $\norm{B_n^{\mathrm o}(h)}_{\op}\to0$, the mfOU claim follows as well.

\end{proof}

\subsection{Final proof of the LAN theorems}

\begin{proof}[\textbf{Proof of theorem \ref{thm:mfbm-lan}}]
Fix $h\in\R^2$ and write
\[
\Sigma_n:=\Sigma_n^{\mathrm m}(\vartheta_{\mathrm m}),
\qquad
\Sigma_{n,h}:=\Sigma_n^{\mathrm m}(\vartheta_{\mathrm m,n}(h)),
\qquad
Z_n:=\Sigma_n^{-1/2}X_n^{\mathrm m}\sim \mathcal{N}(0,I_n).
\]
Define
\[
\mathcal S_n(h):=\Sigma_n^{-1/2}(\Sigma_{n,h}-\Sigma_n)\Sigma_n^{-1/2}.
\]
The standard Gaussian likelihood identity gives
\begin{equation}\label{eq:mfbm-likelihood-identity}
\ell_n^{\mathrm m}(\vartheta_{\mathrm m,n}(h))-
\ell_n^{\mathrm m}(\vartheta_{\mathrm m})
=
-\frac12\log\det\bigl(I_n+\mathcal S_n(h)\bigr)
-\frac12 Z_n^\top\Bigl((I_n+\mathcal S_n(h))^{-1}-I_n\Bigr)Z_n.
\end{equation}
By proposition \ref{prop:critical-linearization} and corollary \ref{cor:cubic-remainder}, the sandwiched local covariance admits a second-order approximation with an additional cubic remainder that is already negligible, and in particular $\norm{\mathcal S_n(h)}_{\op}\to0$. Hence for $n$ large enough we may apply lemma \ref{lem:matrix-taylor} to obtain
\begin{align}
\ell_n^{\mathrm m}(\vartheta_{\mathrm m,n}(h))-
\ell_n^{\mathrm m}(\vartheta_{\mathrm m})
&=\frac12Q_n\bigl(\mathcal S_n(h)\bigr)-\frac14\tr\bigl(\mathcal S_n(h)^2\bigr) \label{eq:mfbm-lan-expand}\\
&\quad-\frac12\Bigl(Z_n^\top\mathcal S_n(h)^2Z_n-\tr\bigl(\mathcal S_n(h)^2\bigr)\Bigr)
-\frac12R_{\log}\bigl(\mathcal S_n(h)\bigr)\nonumber\\
&\quad-\frac12 Z_n^\top R_{\mathrm{inv}}\bigl(\mathcal S_n(h)\bigr)Z_n.
\end{align}
Now write $\mathcal S_n(h)=B_n^{\mathrm m}(h)+\mathcal R_n^{\mathrm m}(h)$. Since
\[
\frac12Q_n\bigl(B_n^{\mathrm m}(h)\bigr)=h^\top\Xi_n^{\mathrm m}
\]
and
\[
\Var\!\left(\frac12Q_n\bigl(\mathcal R_n^{\mathrm m}(h)\bigr)\right)
=\frac12\tr\Bigl(\bigl(\mathcal R_n^{\mathrm m}(h)\bigr)^2\Bigr)\to0,
\]
we have
\begin{equation}\label{eq:mfbm-linear-term}
\frac12Q_n\bigl(\mathcal S_n(h)\bigr)=h^\top\Xi_n^{\mathrm m}+o_{P_{\vartheta_{\mathrm m}}}(1).
\end{equation}
For the quadratic term,
\[
\tr\bigl(\mathcal S_n(h)^2\bigr)
=
\tr\bigl(B_n^{\mathrm m}(h)^2\bigr)
+2\tr\bigl(B_n^{\mathrm m}(h)\mathcal R_n^{\mathrm m}(h)\bigr)
+\tr\bigl((\mathcal R_n^{\mathrm m}(h))^2\bigr).
\]
Because $\tr(B_n^{\mathrm m}(h)^2)=O(1)$ by proposition \ref{prop:mfbm-traces}, the Cauchy--Schwarz inequality and the local linearization result yield
\[
\tr\bigl(\mathcal S_n(h)^2\bigr)=\tr\bigl(B_n^{\mathrm m}(h)^2\bigr)+o(1).
\]
Applying proposition \ref{prop:mfbm-traces} once more gives
\begin{equation}\label{eq:mfbm-quadratic-term}
\frac14\tr\bigl(\mathcal S_n(h)^2\bigr)
=\frac14\tr\bigl(B_n^{\mathrm m}(h)^2\bigr)+o(1)
=\frac12 h^\top\Gamma_{\mathrm m}^{\mathrm{crit}}h+o(1).
\end{equation}
Moreover,
\[
\Var\Bigl(Z_n^\top\mathcal S_n(h)^2Z_n-\tr\bigl(\mathcal S_n(h)^2\bigr)\Bigr)
=2\tr\bigl(\mathcal S_n(h)^4\bigr)
\le 2\norm{\mathcal S_n(h)}_{\op}^2\tr\bigl(\mathcal S_n(h)^2\bigr)\to0,
\]
so
\begin{equation}\label{eq:mfbm-quadratic-fluct}
Z_n^\top\mathcal S_n(h)^2Z_n-\tr\bigl(\mathcal S_n(h)^2\bigr)=o_P(1).
\end{equation}
Next, lemma \ref{lem:matrix-taylor} and the previous bounds imply
\begin{equation}\label{eq:mfbm-log-rem}
R_{\log}\bigl(\mathcal S_n(h)\bigr)
=O\!\left(\norm{\mathcal S_n(h)}_{\op}\tr\bigl(\mathcal S_n(h)^2\bigr)\right)
=o(1).
\end{equation}
Also,
\[
\tr\Bigl(R_{\mathrm{inv}}\bigl(\mathcal S_n(h)\bigr)^2\Bigr)
\le C\tr\bigl(\mathcal S_n(h)^6\bigr)
\le C\norm{\mathcal S_n(h)}_{\op}^4\tr\bigl(\mathcal S_n(h)^2\bigr)\to0,
\]
while
\[
\abs{\tr\Bigl(R_{\mathrm{inv}}\bigl(\mathcal S_n(h)\bigr)\Bigr)}
\le C\norm{\mathcal S_n(h)}_{\op}\tr\bigl(\mathcal S_n(h)^2\bigr)=o(1).
\]
Therefore centered Gaussian quadratic-form bounds give
\begin{equation}\label{eq:mfbm-inv-rem}
Z_n^\top R_{\mathrm{inv}}\bigl(\mathcal S_n(h)\bigr)Z_n=o_P(1).
\end{equation}
Combining \eqref{eq:mfbm-lan-expand}--\eqref{eq:mfbm-inv-rem} yields
\[
\ell_n^{\mathrm m}(\vartheta_{\mathrm m,n}(h))-
\ell_n^{\mathrm m}(\vartheta_{\mathrm m})
=
h^\top\Xi_n^{\mathrm m}-\frac12 h^\top\Gamma_{\mathrm m}^{\mathrm{crit}}h+o_{P_{\vartheta_{\mathrm m}}}(1).
\]
Finally, theorem \ref{thm:mfbm-main} gives $\Xi_n^{\mathrm m}\dto \mathcal{N}(0,\Gamma_{\mathrm m}^{\mathrm{crit}})$, so the mfBm experiment is LAN at $\vartheta_{\mathrm m}$.
\end{proof}

\begin{proof}[\textbf{Proof of theorem \ref{thm:mfou-lan}}]
Fix $h\in\R^3$ and write
\[
\Sigma_n:=\Sigma_n^{\mathrm o}(\vartheta_{\mathrm o}),
\qquad
\Sigma_{n,h}:=\Sigma_n^{\mathrm o}(\vartheta_{\mathrm o,n}(h)),
\qquad
Z_n:=\Sigma_n^{-1/2}X_n^{\mathrm o}\sim \mathcal{N}(0,I_n).
\]
Define $\mathcal S_n(h)=\Sigma_n^{-1/2}(\Sigma_{n,h}-\Sigma_n)\Sigma_n^{-1/2}$. As in \eqref{eq:mfbm-likelihood-identity},
\[
\ell_n^{\mathrm o}(\vartheta_{\mathrm o,n}(h))-
\ell_n^{\mathrm o}(\vartheta_{\mathrm o})
=
-\frac12\log\det\bigl(I_n+\mathcal S_n(h)\bigr)
-\frac12 Z_n^\top\Bigl((I_n+\mathcal S_n(h))^{-1}-I_n\Bigr)Z_n.
\]
By proposition \ref{prop:critical-linearization}, corollary \ref{cor:cubic-remainder} and lemma \ref{lem:matrix-taylor},
\begin{align}\label{eq:mfou-lan-expand}
\ell_n^{\mathrm o}(\vartheta_{\mathrm o,n}(h))-
\ell_n^{\mathrm o}(\vartheta_{\mathrm o})
&=\frac12Q_n\bigl(\mathcal S_n(h)\bigr)-\frac14\tr\bigl(\mathcal S_n(h)^2\bigr)\nonumber\\
&\quad-\frac12\Bigl(Z_n^\top\mathcal S_n(h)^2Z_n-\tr\bigl(\mathcal S_n(h)^2\bigr)\Bigr)\nonumber\\
&\quad-\frac12R_{\log}\bigl(\mathcal S_n(h)\bigr)
-\frac12 Z_n^\top R_{\mathrm{inv}}\bigl(\mathcal S_n(h)\bigr)Z_n.
\end{align}
Write $\mathcal S_n(h)=B_n^{\mathrm o}(h)+\mathcal R_n^{\mathrm o}(h)$. Since
\[
\frac12Q_n\bigl(B_n^{\mathrm o}(h)\bigr)=h^\top\Xi_n^{\mathrm o},
\qquad
\tr\Bigl(\bigl(\mathcal R_n^{\mathrm o}(h)\bigr)^2\Bigr)\to0,
\]
we obtain
\begin{equation}\label{eq:mfou-linear-term}
\frac12Q_n\bigl(\mathcal S_n(h)\bigr)=h^\top\Xi_n^{\mathrm o}+o_{P_{\vartheta_{\mathrm o}}}(1).
\end{equation}
Furthermore,
\[
\tr\bigl(\mathcal S_n(h)^2\bigr)=\tr\bigl(B_n^{\mathrm o}(h)^2\bigr)+o(1).
\]
Now proposition \ref{prop:mfou-traces} gives
\[
\frac14\tr\bigl(B_n^{\mathrm o}(h)^2\bigr)
=\frac12 h^\top\Gamma_{\mathrm o}^{\mathrm{crit}}h+o(1),
\]
because the mixed traces with $\Aon$ are negligible under the critical normalization exactly as in the proof of theorem \ref{thm:mfou-main}. Hence
\begin{equation}\label{eq:mfou-quadratic-term}
\frac14\tr\bigl(\mathcal S_n(h)^2\bigr)=\frac12 h^\top\Gamma_{\mathrm o}^{\mathrm{crit}}h+o(1).
\end{equation}
The higher-order terms are controlled exactly as in the mfBm case since $\norm{\mathcal S_n(h)}_{\op}\to0$ and $\tr(\mathcal S_n(h)^2)=O(1)$:
\[
Z_n^\top\mathcal S_n(h)^2Z_n-\tr\bigl(\mathcal S_n(h)^2\bigr)=o_P(1),
\qquad
R_{\log}\bigl(\mathcal S_n(h)\bigr)=o(1),
\qquad
Z_n^\top R_{\mathrm{inv}}\bigl(\mathcal S_n(h)\bigr)Z_n=o_P(1).
\]
Combining these estimates with \eqref{eq:mfou-lan-expand}--\eqref{eq:mfou-quadratic-term} yields
\[
\ell_n^{\mathrm o}(\vartheta_{\mathrm o,n}(h))-
\ell_n^{\mathrm o}(\vartheta_{\mathrm o})
=
h^\top\Xi_n^{\mathrm o}-\frac12 h^\top\Gamma_{\mathrm o}^{\mathrm{crit}}h+o_{P_{\vartheta_{\mathrm o}}}(1).
\]
Finally, theorem \ref{thm:mfou-main} implies $\Xi_n^{\mathrm o}\dto \mathcal{N}(0,\Gamma_{\mathrm o}^{\mathrm{crit}})$, so the mfOU experiment is LAN at the critical point.
\end{proof}

\begin{remark}
The limiting information matrices $\Gamma_{\mathrm m}^{\mathrm{crit}}$ and $\Gamma_{\mathrm o}^{\mathrm{crit}}$ are deterministic and non-singular. Hence the critical experiments are genuinely LAN, not merely LAMN. In particular, the critical point is resolved by a single triangular local reparametrization.\end{remark}
\section{Boundary score tests and feasible implementation}\label{sec:score-tests}
The LAN expansions obtained above are useful only if they can be turned into statistics whose meaning is clear at the boundary. At $H=3/4$, this means more than writing down a standardized central sequence. The null hypothesis is composite on the left and nuisance estimation cannot be avoided in any serious application. What the critical geometry gives us is a particularly transparent solution to the first of these problems.

Once the score is expressed in the transformed coordinates of Section~\ref{sec:preliminaries}, the $H$-direction can be separated from the nuisance tangent space in exactly the same way as in regular Gaussian LAN problems, except that the relevant normalization now carries the logarithmic critical rate.\\
Throughout this section,
\[
z_p:=\Phi^{-1}(p)
\]
denotes the $p$-quantile of the standard normal law. Because the local experiment is centered at the boundary itself, all critical values are calibrated at $H=3/4$. The econometric object of interest is therefore the $H$-specific part of the critical central sequence after removing the contribution of the nuisance parameter $\sigma$ in the mfBm model, and of the nuisance parameters $\sigma$ and $\alpha$ in the mfOU model.

In the mfBm experiment this construction is especially direct, because the only nuisance parameter is the scale $\sigma$. Let
\[
\Xi_n^{\mathrm m}
=
\begin{pmatrix}
\Xi_{\sigma,n}^{\mathrm m}\\[0.3em]
\Xi_{H,n}^{\mathrm m}
\end{pmatrix}
\]
be the normalized score vector in theorem \ref{thm:mfbm-lan}, and write
\[
\Gamma_{\mathrm m}^{\mathrm{crit}}
=
\begin{pmatrix}
\Gamma_{\sigma\sigma}^{\mathrm m} & \Gamma_{\sigma H}^{\mathrm m}\\
\Gamma_{H\sigma}^{\mathrm m} & \Gamma_{HH}^{\mathrm m}
\end{pmatrix}.
\]
Projecting the $H$-component away from the nuisance direction yields the efficient central sequence
\[
\Delta_{H,n}^{\mathrm{eff,m}}
:=
\Xi_{H,n}^{\mathrm m}
-
\Gamma_{H\sigma}^{\mathrm m}
\bigl(\Gamma_{\sigma\sigma}^{\mathrm m}\bigr)^{-1}
\Xi_{\sigma,n}^{\mathrm m}
=
\Xi_{H,n}^{\mathrm m}-\frac{\sigma}{2}\Xi_{\sigma,n}^{\mathrm m},
\]
with efficient information
\[
I_H^{\mathrm{eff,m}}
:=
\Gamma_{HH}^{\mathrm m}
-
\Gamma_{H\sigma}^{\mathrm m}
\bigl(\Gamma_{\sigma\sigma}^{\mathrm m}\bigr)^{-1}
\Gamma_{\sigma H}^{\mathrm m}
=
\frac{3\sigma^4}{64}.
\]
Accordingly, the natural boundary statistic is
\[
T_n^{\mathrm m}
:=
\frac{\Delta_{H,n}^{\mathrm{eff,m}}}{\sqrt{I_H^{\mathrm{eff,m}}}}
=
\frac{8}{\sqrt3\,\sigma^2}
\left(
\Xi_{H,n}^{\mathrm m}-\frac{\sigma}{2}\Xi_{\sigma,n}^{\mathrm m}
\right).
\]
The formula is simple, but its interpretation matters. The statistic does not test whether the raw $H$-score is large in isolation. It tests whether the component of that score which remains after removing the critical scale effect points into the supercritical region.

The mfOU experiment is formally richer but conceptually no different. There the nuisance parameter is two-dimensional, with $\eta:=(\sigma,\alpha)$, and one might expect the additional drift direction to complicate the efficient score. The critical information matrix shows that this does not happen at the leading order. If
\[
\Xi_n^{\mathrm o}
=
\begin{pmatrix}
\Xi_{\sigma,n}^{\mathrm o}\\[0.3em]
\Xi_{H,n}^{\mathrm o}\\[0.3em]
\Xi_{\alpha,n}^{\mathrm o}
\end{pmatrix}
\]
denotes the normalized score vector in theorem \ref{thm:mfou-lan}, and if we write
\[
\Xi_{\eta,n}^{\mathrm o}
:=
\begin{pmatrix}
\Xi_{\sigma,n}^{\mathrm o}\\[0.3em]
\Xi_{\alpha,n}^{\mathrm o}
\end{pmatrix},
\qquad
\Gamma_{\mathrm o}^{\mathrm{crit}}
=
\begin{pmatrix}
\Gamma_{\eta\eta}^{\mathrm o} & \Gamma_{\eta H}^{\mathrm o}\\
\Gamma_{H\eta}^{\mathrm o} & \Gamma_{HH}^{\mathrm o}
\end{pmatrix},
\]
then the efficient $H$-score is
\[
\Delta_{H,n}^{\mathrm{eff,o}}
:=
\Xi_{H,n}^{\mathrm o}
-
\Gamma_{H\eta}^{\mathrm o}
\bigl(\Gamma_{\eta\eta}^{\mathrm o}\bigr)^{-1}
\Xi_{\eta,n}^{\mathrm o}.
\]
Because \eqref{eq:gamma-o-main} gives $\Gamma_{H\alpha}^{\mathrm o}=0$, the drift parameter falls out of the leading efficient projection and one obtains the same critical direction as in mfBm,
\[
\Delta_{H,n}^{\mathrm{eff,o}}
=
\Xi_{H,n}^{\mathrm o}-\frac{\sigma}{2}\Xi_{\sigma,n}^{\mathrm o},
\qquad
I_H^{\mathrm{eff,o}}
=
\frac{3\sigma^4}{64},
\]
so that
\[
T_n^{\mathrm o}
:=
\frac{\Delta_{H,n}^{\mathrm{eff,o}}}{\sqrt{I_H^{\mathrm{eff,o}}}}
=
\frac{8}{\sqrt3\,\sigma^2}
\left(
\Xi_{H,n}^{\mathrm o}-\frac{\sigma}{2}\Xi_{\sigma,n}^{\mathrm o}
\right).
\]
This coincidence is not merely algebraic convenience. It shows that the critical inferential problem is driven by the same transformed $(\sigma,H)$ block in both models, while the additional OU drift enters only beyond the leading boundary geometry.

\begin{corollary}
\label{cor:one-sided-tests}
Assume the critical LAN expansions in theorems \ref{thm:mfbm-lan} and \ref{thm:mfou-lan}. Under the critical null hypothesis $H=3/4$,
\[
T_n^{\mathrm m}\dto \mathcal{N}(0,1),
\qquad
T_n^{\mathrm o}\dto \mathcal{N}(0,1).
\]
Under local alternatives with positive $H$-component, namely $h=(\cdot,h_H)$ in the mfBm case and $h=(\cdot,h_H,\cdot)$ in the mfOU case with $h_H>0$,
\[
T_n^{\mathrm m}\dto \mathcal{N}\!\bigl(h_H\sqrt{I_H^{\mathrm{eff,m}}},1\bigr),
\qquad
T_n^{\mathrm o}\dto \mathcal{N}\!\bigl(h_H\sqrt{I_H^{\mathrm{eff,o}}},1\bigr).
\]
Consequently, an asymptotic level-$\alpha$ boundary-calibrated one-sided test of
\[
H_0:\ H\le \frac34
\qquad\text{vs}\qquad
H_1:\ H>\frac34
\]
rejects for large values of the statistic, that is, whenever
\[
T_n^{\mathrm m}>z_{1-\alpha}
\quad\text{or}\quad
T_n^{\mathrm o}>z_{1-\alpha},
\]
according to the model under consideration.
\end{corollary}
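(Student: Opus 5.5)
The proof uses only the LAN expansions (Theorems~\ref{thm:mfbm-lan} and~\ref{thm:mfou-lan}), the score CLTs (Theorems~\ref{thm:mfbm-main} and~\ref{thm:mfou-main}), and Le Cam's third lemma. I will treat the mfBm case in full and then indicate the changes for mfOU.

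\textbf{Null distribution.} Under $\vartheta_{\mathrm m}$, Theorem~\ref{thm:mfbm-main} gives $\Xi_n^{\mathrm m}\dto\mathcal N(0,\Gamma_{\mathrm m}^{\mathrm{crit}})$. Write $T_n^{\mathrm m}=a^\top\Xi_n^{\mathrm m}$ with
\[
a:=\frac{1}{\sqrt{I_H^{\mathrm{eff,m}}}}\bigl(-\Gamma_{H\sigma}^{\mathrm m}/\Gamma_{\sigma\sigma}^{\mathrm m},\,1\bigr)^\top .
\]
By the continuous mapping theorem, $T_n^{\mathrm m}$ is asymptotically normal with variance $a^\top\Gamma_{\mathrm m}^{\mathrm{crit}}a$. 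The identity $\Gamma_{HH}-2\Gamma_{H\sigma}^2/\Gamma_{\sigma\sigma}+\Gamma_{H\sigma}^2/\Gamma_{\sigma\sigma}=I_H^{\mathrm{eff}}$ shows that this variance equals $1$. As a consistency check, $\Gamma_{H\sigma}/\Gamma_{\sigma\sigma}=(9\sigma^3/32)/(9\sigma^2/16)=\sigma/2$, which recovers the displayed formula for $T_n^{\mathrm m}$.

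\textbf{Local alternatives.} Under $P_{\vartheta_{\mathrm m,n}(h)}$, Theorem~\ref{thm:mfbm-lan} together with the CLT shows that the pair $(T_n^{\mathrm m},\log dP_{\vartheta_{\mathrm m,n}(h)}/dP_{\vartheta_{\mathrm m}})$ is asymptotically jointly Gaussian under the null. Its limit has mean $(0,-\tfrac12h^\top\Gamma h)$, the log-likelihood component has variance $h^\top\Gamma h$, and the covariance is $a^\top\Gamma h$. Mutual contiguity follows from Le Cam's first lemma. Le Cam's third lemma then gives $T_n^{\mathrm m}\dto\mathcal N(a^\top\Gamma h,1)$ under the alternative.

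It remains to compute the shift. The vector $(-\Gamma_{H\sigma}/\Gamma_{\sigma\sigma},1)\Gamma$ has first component $0$ and second component $I_H^{\mathrm{eff}}$. Hence $a^\top\Gamma h=h_H I_H^{\mathrm{eff}}/\sqrt{I_H^{\mathrm{eff}}}=h_H\sqrt{I_H^{\mathrm{eff,m}}}$, whatever the value of $h_\sigma$. This is the step where the efficient projection matters: it is what removes the dependence on the nuisance component of $h$. The identity should be verified explicitly rather than assumed.

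\textbf{mfOU.} The argument is identical, with $a$ now three-dimensional and $\alpha$-entry $0$. Because $\Gamma_{\mathrm o}^{\mathrm{crit}}$ is block-diagonal, $\Gamma_{H\alpha}^{\mathrm o}=0$. Consequently the $\alpha$-component of $a^\top\Gamma_{\mathrm o}^{\mathrm{crit}}$ vanishes, and the remaining $(\sigma,H)$ computation is the same as for mfBm.

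\textbf{Level and power of the test.} The level statement follows from the null limit: $P(T_n>z_{1-\alpha})\to\alpha$ at $H=3/4$. The local power is then $1-\Phi(z_{1-\alpha}-h_H\sqrt{I_H^{\mathrm{eff}}})>\alpha$ for $h_H>0$.

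\textbf{Main obstacle.} The composite null $H<3/4$ is the delicate point. The LAN expansion is only local at $H=3/4$, so the size over the whole null cannot be derived from it directly. My plan is to interpret ``asymptotic level $\alpha$'' as boundary calibration and to invoke Le Cam's third lemma with $h_H\le0$. This gives the limiting rejection probability $1-\Phi(z_{1-\alpha}-h_H\sqrt{I_H^{\mathrm{eff}}})\le\alpha$ along local null sequences, which establishes the level claim in the local sense that the corollary intends. Fixed $H<3/4$ lies outside the local experiment and would require the off-boundary theory of \cite{Cai2026}, so I would state that restriction explicitly.
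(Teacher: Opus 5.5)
Your proposal is correct and follows the same route as the paper. For the null limit you combine the score CLT with the Schur-complement variance computation, and for local alternatives you apply Le Cam's third lemma to the joint Gaussian limit of $T_n$ and the log-likelihood ratio given by the LAN expansion. You are more explicit than the paper in two places. First, you verify that $a^\top\Gamma h=h_H\sqrt{I_H^{\mathrm{eff}}}$ regardless of the nuisance components of $h$. Second, you point out that the level claim over the composite null $H\le 3/4$ is justified only locally at the boundary, which the paper leaves implicit under the label ``boundary-calibrated''.
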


\begin{proof}
Under the critical null, the LAN central sequences in theorems~\ref{thm:mfbm-lan} and~\ref{thm:mfou-lan} converge to centered Gaussian limits with covariance matrices $\Gamma_{\mathrm m}^{\mathrm{crit}}$ and $\Gamma_{\mathrm o}^{\mathrm{crit}}$. In the mfBm experiment, efficient projection removes the scale direction from the $H$-component of the central sequence and leaves
\[
\Delta_{H,n}^{\mathrm{eff,m}}
=
\Xi^{\mathrm m}_{H,n}-\frac{\sigma}{2}\Xi^{\mathrm m}_{\sigma,n}.
\]
Its asymptotic variance is the Schur complement
\[
I_{H}^{\mathrm{eff,m}}
=
\Gamma^{\mathrm m}_{HH}
-
\Gamma^{\mathrm m}_{H\sigma}
\bigl(\Gamma^{\mathrm m}_{\sigma\sigma}\bigr)^{-1}
\Gamma^{\mathrm m}_{\sigma H},
\]
so standardization yields $T_n^{\mathrm m}\dto\mathcal N(0,1)$.

The mfOU case follows the same argument, except that the nuisance vector is $\eta=(\sigma,\alpha)$. Because the leading critical information satisfies $\Gamma^{\mathrm o}_{H\alpha}=0$, the efficient projection again reduces to
\[
\Delta_{H,n}^{\mathrm{eff,o}}
=
\Xi^{\mathrm o}_{H,n}-\frac{\sigma}{2}\Xi^{\mathrm o}_{\sigma,n},
\]
and its variance is the corresponding Schur complement $I_H^{\mathrm{eff,o}}$. This gives $T_n^{\mathrm o}\dto\mathcal N(0,1)$ under the boundary null.

Under local alternatives, Le Cam's third lemma (\cite{vdV1998}) transports the LAN drift into the efficient score. A positive $H$-component therefore shifts the mean of the efficient statistic to the right while leaving the asymptotic variance unchanged, yielding the limits
\[
T_n^m \xrightarrow{d} N\!\bigl(h_H\sqrt{I_H^{\mathrm{eff},m}},1\bigr),
\qquad
T_n^o \xrightarrow{d} N\!\bigl(h_H\sqrt{I_H^{\mathrm{eff},o}},1\bigr).
\]
The one-sided rejection region follows immediately from this directional mean shift.
\end{proof}

The one-sided version is the economically relevant test throughout the paper, because the question of interest is whether the data have crossed into the supercritical region. A two-sided statistic can be written down just as easily, but it addresses a different question. We record it only for completeness.

\begin{corollary}
\label{cor:two-sided-tests}
Under the assumptions of corollary \ref{cor:one-sided-tests}, an asymptotic
level-$\alpha$ two-sided test of
\[
H_0:\ H=\frac34
\qquad\text{vs}\qquad
H_1:\ H\neq \frac34
\]
rejects whenever
\[
|T_n^{\mathrm m}|>z_{1-\alpha/2}
\quad\text{or}\quad
|T_n^{\mathrm o}|>z_{1-\alpha/2},
\]
according to the model under consideration.
\end{corollary}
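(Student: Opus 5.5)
The statement is a direct consequence of Corollary~\ref{cor:one-sided-tests}, and I would prove it by reusing the null limits $T_n^{\mathrm m}\dto\mathcal N(0,1)$ and $T_n^{\mathrm o}\dto\mathcal N(0,1)$ established there under $\vartheta_{\mathrm m}$ and $\vartheta_{\mathrm o}$. Recall that these limits come from Theorems~\ref{thm:mfbm-main} and~\ref{thm:mfou-main}. The efficient statistic is a fixed linear functional of the normalized score vector $\Xi_n^\bullet$, namely
\[
T_n^\bullet=\frac{8}{\sqrt3\,\sigma^2}\Bigl(\Xi_{H,n}^\bullet-\frac{\sigma}{2}\Xi_{\sigma,n}^\bullet\Bigr),
\]
so it is asymptotically Gaussian with variance
\[
\frac{64}{3\sigma^4}\Bigl(\Gamma_{HH}^\bullet-\sigma\Gamma_{\sigma H}^\bullet+\frac{\sigma^2}{4}\Gamma_{\sigma\sigma}^\bullet\Bigr).
\]
Substituting the entries of \eqref{eq:gamma-m-main} and \eqref{eq:gamma-o-main}, this variance equals $\frac{64}{3\sigma^4}\cdot\frac{3\sigma^4}{64}=1$. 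I would state this check once, because it is what gives the exact unit variance rather than merely some Gaussian limit.

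Next I would apply the continuous mapping theorem to $x\mapsto|x|$, which gives $|T_n^\bullet|\dto|Z|$ with $Z\sim\mathcal N(0,1)$. The point $z_{1-\alpha/2}$ is a continuity point of the law of $|Z|$, so the portmanteau theorem yields
\[
P_{\vartheta}\bigl(|T_n^\bullet|>z_{1-\alpha/2}\bigr)\to P(|Z|>z_{1-\alpha/2})=2\bigl(1-\Phi(z_{1-\alpha/2})\bigr)=\alpha.
\]
This holds for each $\bullet\in\{\mathrm m,\mathrm o\}$ at the boundary point with the nuisance parameters fixed at their true values. Since $H_0:H=3/4$ is a simple hypothesis in $H$, this is exactly the asymptotic level-$\alpha$ claim.

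The only delicate point is interpretive: $T_n^\bullet$ depends on $\sigma$, and for $\mathrm o$ implicitly on $\alpha$ through the score matrices. I would therefore state the result, as the corollary does, for the oracle statistic evaluated at the true nuisance values, which is the setting of Corollary~\ref{cor:one-sided-tests}. Any feasible version with estimated nuisances would need a separate plug-in argument and is outside this statement. Optionally, I would add consistency against local alternatives using Le Cam's third lemma as in Corollary~\ref{cor:one-sided-tests}. Under $h_H\neq0$ the limit is $\mathcal N(h_H\sqrt{I_H^{\mathrm{eff}}},1)$, so the power is $P(|Z+h_H\sqrt{I_H^{\mathrm{eff}}}|>z_{1-\alpha/2})>\alpha$, which shows the test has nontrivial power in both directions.
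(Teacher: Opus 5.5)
Your argument is correct and matches the paper. The paper gives no separate proof of this corollary and treats it as an immediate consequence of the null limits $T_n^{\mathrm m},T_n^{\mathrm o}\dto\mathcal N(0,1)$ from Corollary~\ref{cor:one-sided-tests} together with the symmetric rejection region $\{|T_n^\bullet|>z_{1-\alpha/2}\}$; your unit-variance check and the continuous-mapping/portmanteau step just spell out that one-line deduction rigorously.
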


The oracle statistics $T_n^{\bullet}$, $\bullet\in\{\mathrm m,\mathrm o\}$, obtained when $\sigma$ is known, are the right theoretical benchmarks, but they are not yet the objects that an applied researcher would compute. In practice one would estimate nuisance parameters under the boundary restriction $H=3/4$, evaluate the transformed score at that restricted estimator, and then standardize the efficient projection with the corresponding plug-in information. For mfBm this means replacing $\sigma$ by a restricted estimator $\widehat\sigma_0$, for mfOU one analogously works with $(\widehat\sigma_0,\widehat\alpha_0)$. The resulting statistic is the feasible boundary score.

This feasible version is where the composite null issue becomes genuinely econometric. The LAN calculations already show that the critical score itself has a clean nuisance orthogonal representation. What remains is to justify that estimating the nuisance parameters under the restriction $H=3/4$ does not feed back into the first-order null law. Put differently, one needs a result showing that the plug-in error enters only at a smaller order than the logarithmically normalized efficient score. In regular likelihood settings such a statement is often routine, here it is precisely the sort of point that deserves separate treatment because the critical rate is slow and the boundary experiment is singular before transformation.

\begin{lemma}
\label{lem:plugin-validity-mfbm}
Assume that under the boundary null $H=3/4$ the restricted mfBm estimator satisfies
\[
\widehat\sigma_0-\sigma = O_P\bigl((n\Delta_n)^{-1/2}\bigr),
\]
and that there exists a deterministic sequence $\varepsilon_n\downarrow0$ such that
\[
\sup_{|\widetilde\sigma-\sigma|\le \varepsilon_n}
\left|\partial_\sigma \widehat T_n^{\mathrm m}(\widetilde\sigma)\right|
=O_P\bigl((n\Delta_n)^{1/2}L_n^{-3/2}\bigr).
\]
Then
\[
\widehat T_n^{\mathrm m}(\widehat\sigma_0)=\widehat T_n^{\mathrm m}(\sigma)+o_P(1).
\]
In particular, once the oracle statistic has the standard normal null limit, the feasible mfBm statistic has the same first-order null law.
\end{lemma}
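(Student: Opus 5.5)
The argument is a one-dimensional mean value expansion of the feasible statistic in the nuisance argument, followed by a rate comparison. Regard $\widetilde\sigma\mapsto\widehat T_n^{\mathrm m}(\widetilde\sigma)$ as a random function on a neighbourhood of the true $\sigma>0$. For fixed data it is smooth there. Indeed, $\Sigma_n^{\mathrm m}(\widetilde\sigma,3/4)=\Delta_n\bigl(I_n+\widetilde\sigma^2\Delta_n^{1/2}\Toe{f_{3/4}}\bigr)$ is a polynomial in $\widetilde\sigma$ and is uniformly positive definite (it dominates $\Delta_nI_n$). Hence the scores $S_{\sigma,n}^{\mathrm m}$ and $R_{H,n}^{\mathrm m}$ evaluated at $(\widetilde\sigma,3/4)$, the normalizations, and the plug-in factors $8/(\sqrt3\,\widetilde\sigma^2)$ and $\widetilde\sigma/2$ are all $C^1$ in $\widetilde\sigma$ on compact subsets of $(0,\infty)$.

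\emph{Step 1: localize.} Introduce the event $E_n:=\{|\widehat\sigma_0-\sigma|\le\varepsilon_n\}$. By the assumed rate $\widehat\sigma_0-\sigma=O_P((n\Delta_n)^{-1/2})$, we get $P(E_n)\to1$ as soon as $\varepsilon_n(n\Delta_n)^{1/2}\to\infty$. This is the one delicate point. The hypothesis only says that $\varepsilon_n\downarrow0$, so I would read the lemma with $\varepsilon_n$ decaying more slowly than $(n\Delta_n)^{-1/2}$, for instance $\varepsilon_n=(n\Delta_n)^{-1/4}$. This costs nothing, because the derivative bound is a condition on a shrinking neighbourhood and one is free to take it as large as the estimator rate requires. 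If $\varepsilon_n$ were allowed to shrink faster, the sup condition would not cover $\widehat\sigma_0$ and the argument would break. I expect making this compatibility explicit to be the main, though modest, obstacle.

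\emph{Step 2: mean value bound.} On $E_n$, the whole segment between $\sigma$ and $\widehat\sigma_0$ lies in $\{|\widetilde\sigma-\sigma|\le\varepsilon_n\}$. The mean value theorem therefore gives
\[
\bigl|\widehat T_n^{\mathrm m}(\widehat\sigma_0)-\widehat T_n^{\mathrm m}(\sigma)\bigr|
\le |\widehat\sigma_0-\sigma|\,
\sup_{|\widetilde\sigma-\sigma|\le\varepsilon_n}\bigl|\partial_\sigma\widehat T_n^{\mathrm m}(\widetilde\sigma)\bigr|
=O_P\bigl((n\Delta_n)^{-1/2}\bigr)\,O_P\bigl((n\Delta_n)^{1/2}L_n^{-3/2}\bigr)=O_P\bigl(L_n^{-3/2}\bigr).
\]
Since $L_n=\log(1/\Delta_n)\to\infty$, this is $o_P(1)$. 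Off $E_n$ the difference is irrelevant because $P(E_n^c)\to0$.

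\emph{Step 3: transfer the null law.} At the true $\sigma$, the statistic $\widehat T_n^{\mathrm m}(\sigma)$ is the oracle $T_n^{\mathrm m}$, which satisfies $T_n^{\mathrm m}\dto\mathcal N(0,1)$ by Corollary~\ref{cor:one-sided-tests}. Slutsky's lemma then gives $\widehat T_n^{\mathrm m}(\widehat\sigma_0)\dto\mathcal N(0,1)$. The same conclusion holds under contiguous local alternatives, since an $o_P(1)$ term under $P_{\vartheta_{\mathrm m}}$ remains $o_P(1)$ under contiguous alternatives by the LAN property of Theorem~\ref{thm:mfbm-lan}.
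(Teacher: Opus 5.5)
Your proposal is correct and follows the paper's proof essentially verbatim: localize $\widehat\sigma_0$ in the $\varepsilon_n$-neighbourhood, apply the mean value theorem, and multiply the two rates to get $O_P(L_n^{-3/2})=o_P(1)$. The compatibility condition $\varepsilon_n(n\Delta_n)^{1/2}\to\infty$ that you flag is exactly what the paper's ``with probability tending to one'' tacitly uses. It is a genuine, if harmless, strengthening of the stated hypothesis rather than a free choice, because enlarging the neighbourhood makes the sup bound harder to satisfy.
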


\begin{proof}
With probability tending to one, $|\widehat\sigma_0-\sigma|\le \varepsilon_n$, so the mean-value theorem yields
\[
\widehat T_n^{\mathrm m}(\widehat\sigma_0)-\widehat T_n^{\mathrm m}(\sigma)
=(\widehat\sigma_0-\sigma)\,\partial_\sigma \widehat T_n^{\mathrm m}(\widetilde\sigma_n)
\]
for some random $\widetilde\sigma_n$ between $\widehat\sigma_0$ and $\sigma$. The first factor is $O_P((n\Delta_n)^{-1/2})$ by assumption, whereas the second is uniformly $O_P((n\Delta_n)^{1/2}L_n^{-3/2})$ on that neighborhood. Their product is therefore $O_P(L_n^{-3/2})=o_P(1)$, which proves the claim.
\end{proof}

This lemma gives the minimal condition needed to complete the feasible mfBm argument. Once the restricted estimator has the usual boundary rate and the feasible score is locally smooth in $\sigma$, the slow logarithmic normalization pushes the plug-in contribution below the first-order null scale. The mfOU case is handled in an analogous manner. There the nuisance vector is $(\sigma,\alpha)$ rather than $\sigma$ alone, but the same Taylor expansion applies componentwise and no new first-order term is created by the drift direction because the critical $(H,\alpha)$ block has already been projected out. This is why the Monte Carlo analysis below focuses on the feasible mfBm statistic rather than on the oracle object. The numerical question is not whether the LAN formulas can be standardized on paper, but whether the plug-in score already behaves like a practical diagnostic for entry into the supercritical regime.

The same construction can be turned around if one wishes to study movement below the boundary rather than above it. In that case the rejection region is placed in the left tail, that is, $T_n^{\bullet}<z_{\alpha}$ for $\bullet\in\{\mathrm m,\mathrm o\}$.

\section{Monte Carlo diagnostics for the feasible boundary statistic}\label{sec:simulation}
The numerical question at the boundary is not whether the transformed score has the right sign on paper, but whether the feasible statistic can already be computed in a stable way and how conservative its null calibration remains once the logarithmic factor $L_n=\log(1/\Delta_n)$ enters the normalization. We consider two mfBm experiments with 1000 replications each, both built around the feasible one-sided statistic.
\[
\widehat T_n^{\mathrm m}
=
\frac{8}{\sqrt3\,\widehat{\sigma}_0^2}
\left(
\widehat{\Xi}_{H,n}^{\mathrm m}
-\frac{\widehat{\sigma}_0}{2}\widehat{\Xi}_{\sigma,n}^{\mathrm m}
\right),
\]
where the nuisance parameter is estimated under the boundary restriction
\[
\widehat{\sigma}_0
:=
\arg\max_{\sigma>0}\ell_n^{\mathrm m}(\sigma,3/4).
\]
In every replication we simulate the mfBm increment vector under the relevant data-generating law, re-estimate $\widehat{\sigma}_0$ under $H=3/4$, and then evaluate $\widehat T_n^{\mathrm m}$ exactly as in Section~\ref{sec:score-tests}. Large positive values support $H>3/4$.

The first experiment studies directional power at a fixed high-frequency design,
\[
n=160,\qquad \Delta_n=10^{-2},\qquad \sigma=3,
\]
over the grid
\[
H\in\{0.75,0.78,0.80,0.85,0.90\}.
\]
The second experiment keeps the model at the boundary,
\[
H=3/4,\qquad \sigma=3,
\]
and follows the asymptotic sequence
\[
\Delta_n=n^{-1/2},
\qquad
n\in\{64,128,256,512\},
\]
again with $1{,}000$ replications at each design point.

\begin{table}[t]
\centering
\caption{Experiment I: power curve of the feasible one-sided mfBm statistic at a fixed design}
\label{tab:sim-power-grid}
\begin{tabular}{ccccccc}
\hline
True $H$ & mean$(\widehat T_n^{\mathrm m})$ & sd$(\widehat T_n^{\mathrm m})$ & rej. 10\% & rej. 5\% & mean$(\widehat\sigma_0)$ & sd$(\widehat\sigma_0)$\\
\hline
$0.75$ & $0.013$ & $0.528$ & $0.022$ & $0.009$ & $2.982$ & $0.397$\\
$0.78$ & $0.190$ & $0.761$ & $0.074$ & $0.038$ & $2.540$ & $0.392$\\
$0.80$ & $0.386$ & $0.854$ & $0.134$ & $0.080$ & $2.339$ & $0.392$\\
$0.85$ & $0.854$ & $1.375$ & $0.321$ & $0.245$ & $1.839$ & $0.444$\\
$0.90$ & $1.422$ & $1.846$ & $0.474$ & $0.394$ & $1.472$ & $0.494$\\
\hline
\end{tabular}

\end{table}

\begin{table}[t]
\centering
\caption{Experiment II: critical-null behavior of the feasible mfBm statistic along an asymptotic sequence}
\label{tab:sim-null-sequence}
\begin{tabular}{cccccccc}
\hline
$n$ & $\Delta_n$ & mean$(\widehat T_n^{\mathrm m})$ & sd$(\widehat T_n^{\mathrm m})$ & rej. 10\% & rej. 5\% & mean$(\widehat\sigma_0)$ & sd$(\widehat\sigma_0)$\\
\hline
$64$ & $0.1250$ & $0.018$ & $0.688$ & $0.044$ & $0.023$ & $2.990$ & $0.378$\\
$128$ & $0.0884$ & $-0.013$ & $0.593$ & $0.029$ & $0.009$ & $2.973$ & $0.283$\\
$256$ & $0.0625$ & $0.011$ & $0.549$ & $0.026$ & $0.006$ & $2.995$ & $0.211$\\
$512$ & $0.0442$ & $-0.034$ & $0.511$ & $0.007$ & $0.002$ & $2.995$ & $0.152$\\
\hline
\end{tabular}

\end{table}

Table~\ref{tab:sim-power-grid} shows that the feasible statistic is directionally informative in the way the theory predicts. Under the boundary law it remains essentially centered and heavily conservative, but its mean and rejection frequency both increase as the true Hurst index moves deeper into the supercritical region. The power gain is gradual near $H=0.78$ and $H=0.80$, then becomes much more visible at $H=0.85$ and $H=0.90$.

Table~\ref{tab:sim-null-sequence} shows equally clearly that the conservative null behavior is not a small-sample accident. Even as $n$ increases from $64$ to $512$, the empirical standard deviation of $\widehat T_n^{\mathrm m}$ stays far below the asymptotic unit benchmark and the rejection rates at nominal $10\%$ and $5\%$ levels remain well below target. The statistic is therefore numerically stable and already useful as a one-sided diagnostic, but first-order Gaussian critical values still under-reject substantially at realistic sample sizes.

Figure~\ref{fig:mc-power} and Figure~\ref{fig:mc-null} summarize the same two messages graphically. The first is that the feasible score has a genuine power curve once the data move above the boundary. The second is that the critical logarithmic normalization makes convergence slow, so we should eventually supplement the asymptotic critical values with a finite-sample correction such as a null-based bootstrap.

\begin{figure}[h]
\centering
\includegraphics[width=0.72\textwidth]{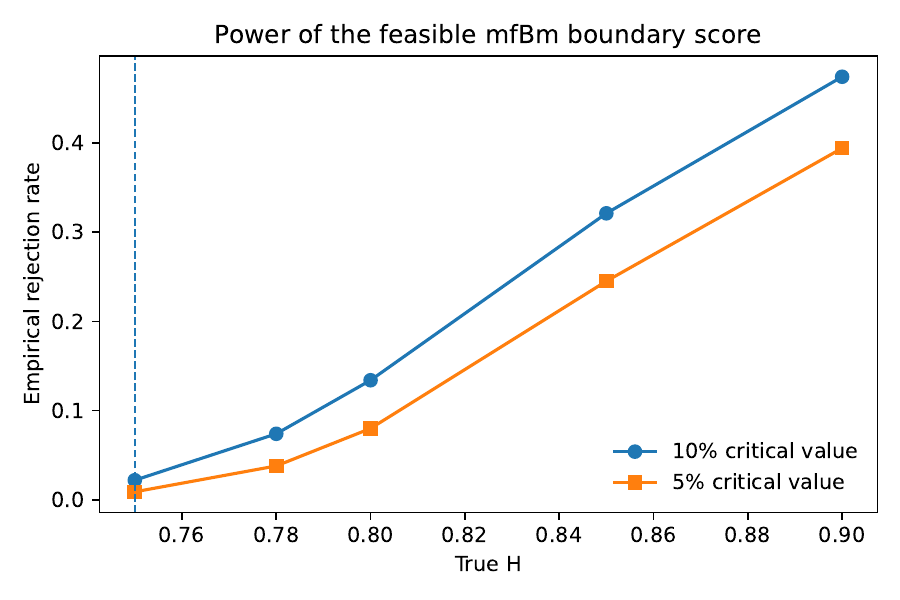}
\caption{Empirical rejection rates of the feasible mfBm boundary score across the supercritical power grid in Experiment~I.}
\label{fig:mc-power}
\end{figure}

\begin{figure}[h]
\centering
\includegraphics[width=0.72\textwidth]{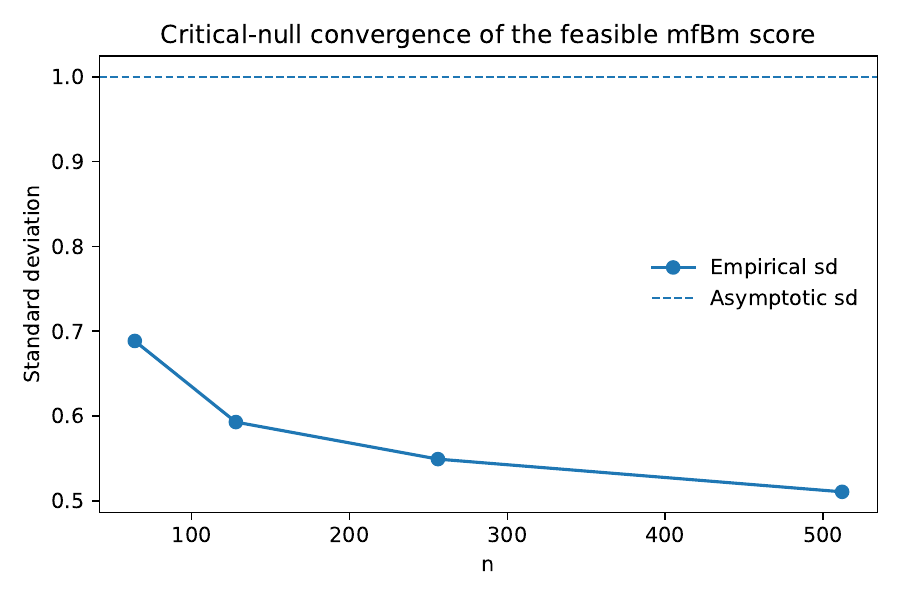}
\caption{Empirical standard deviation of the feasible mfBm boundary score under the critical null across the asymptotic sequence in Experiment~II.}
\label{fig:mc-null}
\end{figure}

 The Monte Carlo section is not definitive, but the upgraded simulations already show that the feasible mfBm statistic is stable, its power increases as expected, and its main shortcoming is conservative null calibration, not numerical instability.

\section{Empirical implementation of the exact feasible boundary score}\label{sec:empirical}
We apply the exact feasible mfBm statistic to one-minute SPY data from 2008-01-22 to 2021-05-06. Because overnight gaps break the equally spaced increment scheme, we work day by day on the regular session only, keeping observations from 07{:}30 to 13{:}59 and discarding incomplete days. This leaves $2{,}621$ trading days, each with $390$ prices and therefore $n=389$ close-to-close log returns.

For each day we first normalize the intraday return vector so that its empirical diffusive variance matches the Brownian benchmark $\Delta_n=1/389$. Under the boundary restriction $H=3/4$ we then estimate $\sigma$ by the exact Gaussian likelihood,
\[
\widehat\sigma_0:=\arg\max_{\sigma>0}\,\ell_n^{\mathrm m}(\sigma,3/4),
\]
and evaluate
\[
\widehat\Xi_{\sigma,n}^{\mathrm m}:=\frac{S_{\sigma,n}^{\mathrm m}(\widehat\sigma_0,3/4)}{\sqrt{n\Delta_nL_n}},
\qquad
\widehat\Xi_{H,n}^{\mathrm m}:=\frac{R_{H,n}^{\mathrm m}(\widehat\sigma_0,3/4)}{\sqrt{n\Delta_n}\,L_n^{3/2}}.
\]
The empirical boundary statistic is therefore
\begin{equation}\label{eq:empirical-feasible-score}
\widehat T_n^{\mathrm m}
:=\frac{8}{\sqrt3\,\widehat\sigma_0^2}
\left(
\widehat\Xi_{H,n}^{\mathrm m}-\frac{\widehat\sigma_0}{2}\widehat\Xi_{\sigma,n}^{\mathrm m}
\right).
\end{equation}
Large positive values support the one-sided alternative $H>3/4$.

Table~\ref{tab:spy-full} summarizes the daily statistic over the full sample. The center of the distribution stays close to zero, the empirical dispersion is slightly below one, and the rejection shares are modest: $10.4\%$ at the nominal $10\%$ threshold and $6.5\%$ at the nominal $5\%$ threshold. In other words, the exact feasible score can be computed stably on genuine intraday data, but the SPY sample does not display persistent evidence for a supercritical regime.

\begin{table}[h]
\centering
\caption{One-minute SPY boundary score: full-sample daily summary}
\label{tab:spy-full}
\begin{tabular}{lr}
\toprule
Statistic & Value \\
\midrule
Trading days & 2621 \\
Mean $\widehat T_n^{\mathrm m}$ & 0.005 \\
Median $\widehat T_n^{\mathrm m}$ & -0.185 \\
Std. dev. $\widehat T_n^{\mathrm m}$ & 0.916 \\
Rejection share (10\%) & 10.4\% \\
Rejection share (5\%) & 6.5\% \\
Median $\widehat\sigma_0$ & 0.0002 \\
\bottomrule
\end{tabular}
\end{table}

Table~\ref{tab:spy-sub} breaks the sample into broad subperiods. The 2008--2009 block is somewhat more positive on average, while the 2020--2021 block is mildly negative, but none of the subperiod summaries suggests a sustained right-tail shift large enough to indicate repeated entry into $H>3/4$.

\begin{table}[h]
\centering
\caption{One-minute SPY boundary score: subperiod summaries}
\label{tab:spy-sub}
\begin{tabular}{lrrrrrr}
\toprule
Period & Days & Mean $T$ & Median $T$ & Reject 10\% & Reject 5\% & Median $\widehat{\sigma}_0$ \\
\midrule
2008-2009 & 385 & 0.118 & -0.091 & 13.0\% & 7.3\% & 0.0002 \\
2010-2019 & 1968 & 0.006 & -0.188 & 10.3\% & 6.6\% & 0.0002 \\
2020-2021 & 268 & -0.158 & -0.301 & 7.5\% & 4.9\% & 0.0002 \\
\bottomrule
\end{tabular}
\end{table}

Figure~\ref{fig:spy-daily} plots the daily statistic together with its 60-day rolling mean. Crossings of the one sided critical lines occur, but they are not persistent and do not cumulate into a persistent upward regime. Figure~\ref{fig:spy-rollrej} reports the 60-day rolling rejection share at the $5\%$ level. It fluctuates around the nominal benchmark and never points to a long interval of systematic supercritical rejection.

\begin{figure}[t]
\centering
\includegraphics[width=0.76\textwidth]{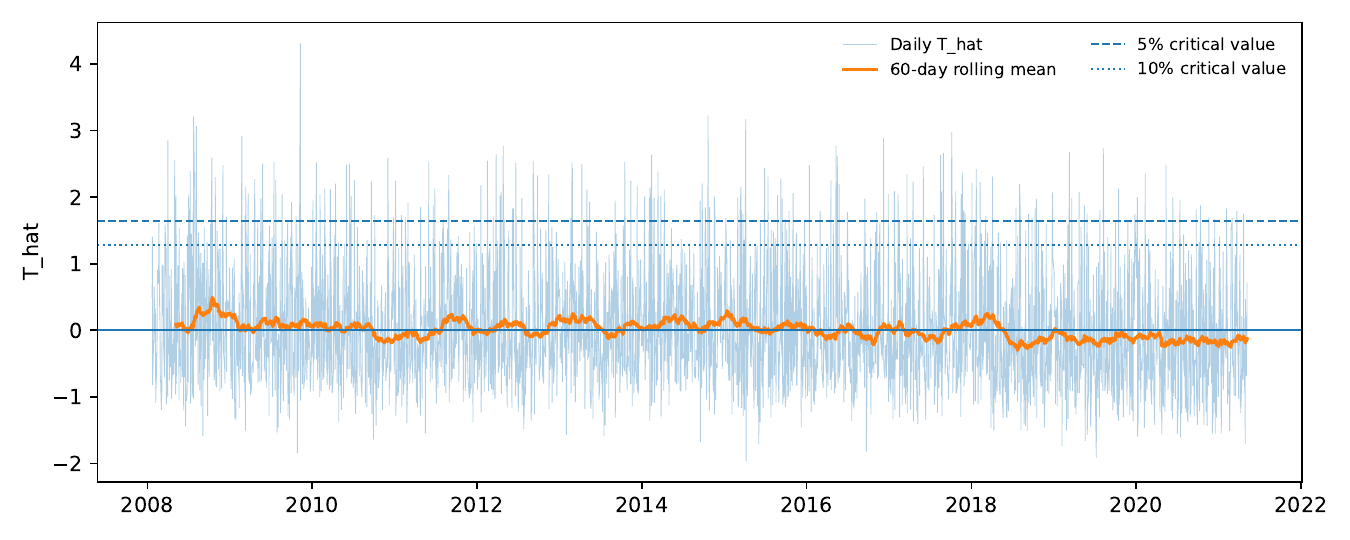}
\caption{Daily exact feasible mfBm statistic on one-minute SPY returns, with 60-day rolling mean and one-sided critical lines.}
\label{fig:spy-daily}
\end{figure}

\begin{figure}[t]
\centering
\includegraphics[width=0.76\textwidth]{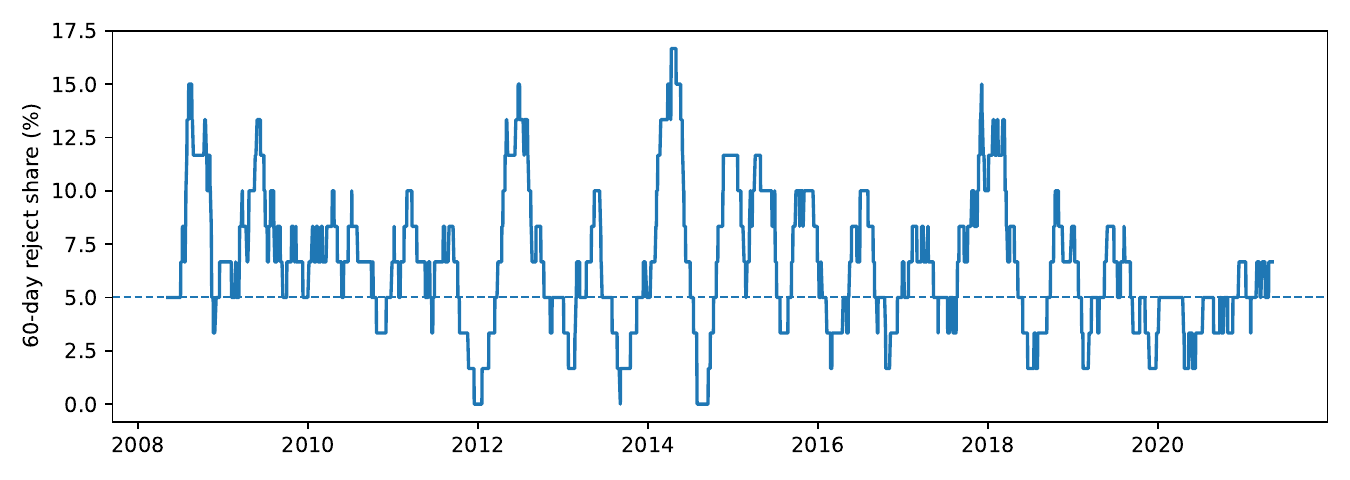}
\caption{60-day rolling rejection share of the exact feasible mfBm statistic at the nominal $5\%$ level.}
\label{fig:spy-rollrej}
\end{figure}

The empirical conclusion is therefore narrow but concrete. Once the section is built around the statistic developed earlier in the paper, the main message is not a recycled roughness estimate but a direct boundary statement: in this one-minute SPY sample, the exact feasible score does not deliver persistent evidence that the data live on the supercritical side of the semimartingale threshold.

This is the right interpretation of the empirical exercise. The object of interest in the present paper is not whether the data exhibit roughness in the now standard sense of very small Hurst exponents, but whether they cross the much higher boundary $H=3/4$ that separates the critical and supercritical regimes in mixed fractional models. In that sense, non-rejection in favor of $H>3/4$ should not be read as evidence against the broader rough-volatility literature. Rather, it says that the SPY sample stays on the critical or subcritical side of the critical boundary. Once that supercritical possibility is ruled out, the remaining question is how to estimate $H$ inside the subcritical region, and that is precisely the question treated by the roughness and fractional volatility literatures; see
\citet{GatheralJaissonRosenbaum2018}, \citet{BolkoEtAl2023}, \citet{WangXiaoYu2023}, and \citet{ShiYuZhang2024}.

\FloatBarrier
\section{Conclusion}
This paper develops a boundary theory for high-frequency inference in mixed fractional models at the critical point $H=3/4$. Exactly there, the usual off-boundary normalizations fail when $H>3/4$, yet the transformed $(\sigma,H)$ block remains non-degenerate once the explicit linear component of the $H$-score has been removed. That single observation is enough to recover a workable local geometry and, from it, to derive exact critical score CLTs, explicit LAN expansions, and boundary-calibrated score tests for both the mfBm and mfOU experiments.

The broader message is that the critical boundary is statistically accessible. In a literature that has understandably concentrated on rough regimes with small Hurst exponents, our results address the complementary question of how to conduct disciplined inference when the object of interest lies near the semimartingale threshold itself. In that sense, the paper is meant to complement rather than compete with the rough-volatility literature represented by \citet{BayerFrizGatheral2016}, \citet{GatheralJaissonRosenbaum2018}, \citet{Fukasawa2021}, and with the recent \emph{Journal of Econometrics} contributions of \citet{WangXiaoYu2023}, \citet{BolkoEtAl2023}, and \citet{ShiYuZhang2024}. Those papers show why fractional and rough dynamics matter empirically and how one estimates $H$ once the process is already understood to lie in a rough or otherwise subcritical region. The present one asks the prior question of whether a mixed fractional specification has crossed into the supercritical side of the boundary at all.

The score test analysis shows that the boundary experiment yields a feasible statistic with a clear nuisance orthogonal interpretation, and the Monte Carlo evidence shows why finite-sample calibration is delicate at the critical point. The intraday SPY illustration completes that picture on the data side. Once the exact feasible score is applied directly to regular-session one-minute returns, the empirical message is simple: the statistic is operational, but the sample does not exhibit persistent supercritical evidence. That is precisely the kind of statement the paper is designed to deliver. Therefore, the main remaining step is no longer about theory but about empirical scope: we need richer data sets and broader market panels to determine whether this largely subcritical pattern is unique to SPY or common across high-frequency equity benchmarks.

\FloatBarrier
\appendix

\section{Technical appendix: critical low frequency reductions and explicit constants}\label{app:profiles}
 This appendix records the one-dimensional reductions that lead to the critical constants in \eqref{eq:xi-m-main}--\eqref{eq:xi-o-main}. For the mfBm model, we start from the exact spectral density given in \eqref{eq:fgn-sd-exact}. From it, we extract two key quantities $K_{\mathrm{m}}$ and $\beta_{\mathrm{m}}$ at $H = 3/4$.

We then perform a low frequency rescaling by setting $\lambda = \Delta_n u$, which defines the critical window on which the spectral behavior simplifies. On this window, we derive the  reduced symbols which are the functions of the scaled frequency $u$ that approximate the original spectral density and its derivatives.

Using these reduced symbols, we evaluate certain logarithmic integrals (see Appendix B). These integrals produce the three asymptotic orders
$n\Delta_n L_n$, $n\Delta_n L_n^2$, and $n\Delta_n L_n^3$, which determine the scaling of the score covariances.

Finally, the last subsection of this appendix provides bounds for higher-order derivatives and for the remainder terms that appear in the local expansion. These bounds are essential for the LAN proof in this paper.

\subsection{mfBm: critical window and reduced profiles}
At $H=3/4$ the signal-to-noise ratio is
\[
\gamma_n:=\sigma^2\Delta_n^{2H-1}=\sigma^2\Delta_n^{1/2}.
\]
Using \eqref{eq:mfbm-lowfreq}, for every fixed $A>0$ and uniformly on
$1\le |u|\le A/\Delta_n$ we have
\begin{equation}\label{eq:app-mfbm-lowfreq-1}
\gamma_n f_{3/4}(\Delta_n u)
=\sigma^2\Delta_n^{1/2}K_{\mathrm m}|\Delta_n u|^{-1/2}\bigl(1+r_{\mathrm m}(\Delta_n u)\bigr)
=\eta_{\mathrm m}|u|^{-1/2}\bigl(1+o(1)\bigr),
\end{equation}
where $\eta_{\mathrm m}=\sigma^2K_{\mathrm m}$. Likewise,
\begin{align}
\gamma_n\,\partial_H f_H\big|_{H=3/4}(\Delta_n u)
&=\eta_{\mathrm m}|u|^{-1/2}\Bigl(\beta_{\mathrm m}-2\log|\Delta_n u|+o(1)\Bigr)\notag\\
&=\eta_{\mathrm m}|u|^{-1/2}\Bigl(2\Ln+\beta_{\mathrm m}-2\log|u|+o(1)\Bigr).
\label{eq:app-mfbm-lowfreq-2}
\end{align}
Therefore the reduced covariance symbol
\[
a_{\mathrm m,n}(u):=\frac{\gamma_n f_{3/4}(\Delta_n u)}{1+\gamma_n f_{3/4}(\Delta_n u)}
\]
converges locally uniformly on $\R\setminus\{0\}$ to
\[
w_{\mathrm m}(u)=\frac{\eta_{\mathrm m}|u|^{-1/2}}{1+\eta_{\mathrm m}|u|^{-1/2}}.
\]
Consequently the sandwiched score symbol for $\Cmn$ is
\[
g_{\sigma}^{\mathrm m}(u)=\frac{2}{\sigma}w_{\mathrm m}(u),
\]
and, after removing the exact linear term $-\sigma\Ln\,S_{\sigma,n}^{\mathrm m}$,
the reduced symbol associated with $\Dmn$ is
\[
g_{H,n}^{\mathrm m}(u)=w_{\mathrm m}(u)\bigl(2\Ln+\beta_{\mathrm m}-2\log|u|\bigr)+o\bigl(w_{\mathrm m}(u)\bigr),
\]
which is precisely the profile used in \eqref{eq:h-profiles}. In particular,
for large $|u|$,
\begin{equation}\label{eq:app-mfbm-tail}
w_{\mathrm m}(u)\sim \eta_{\mathrm m}|u|^{-1/2},\qquad
g_{\sigma}^{\mathrm m}(u)\sim \frac{2\eta_{\mathrm m}}{\sigma}|u|^{-1/2},\qquad
g_{H,n}^{\mathrm m}(u)\sim \eta_{\mathrm m}|u|^{-1/2}\bigl(2\Ln+\beta_{\mathrm m}-2\log|u|\bigr).
\end{equation}

\subsection{Explicit critical integrals for the mfBm block}
Introduce the three basic logarithmic integrals
\begin{align}
J_0(\Ln)&:=2\int_1^{1/\Delta_n}u^{-1}\,\dd u=2\Ln,\label{eq:J0}\\
J_1(\beta,\Ln)&:=2\int_1^{1/\Delta_n}u^{-1}\bigl(2\Ln+\beta-2\log u\bigr)\,\dd u
=2\Ln^2+2\beta\Ln,\label{eq:J1}\\
J_2(\beta,\Ln)&:=2\int_1^{1/\Delta_n}u^{-1}\bigl(2\Ln+\beta-2\log u\bigr)^2\,\dd u
=\frac{8}{3}\Ln^3+4\beta\Ln^2+2\beta^2\Ln.\label{eq:J2}
\end{align}
The bounded region $|u|\le1$ contributes only lower-order terms, so by
\eqref{eq:app-mfbm-tail},
\begin{align}
\frac{n\Delta_n}{2\pi}\int_{-1/\Delta_n}^{1/\Delta_n}\bigl(g_{\sigma}^{\mathrm m}(u)\bigr)^2\,\dd u
&\sim \frac{n\Delta_n}{2\pi}\frac{4\eta_{\mathrm m}^2}{\sigma^2}J_0(\Ln)
=\frac{4\sigma^2K_{\mathrm m}^2}{\pi}\,n\Delta_n\Ln,
\label{eq:app-mfBm-Iss}\\
\frac{n\Delta_n}{2\pi}\int_{-1/\Delta_n}^{1/\Delta_n}g_{\sigma}^{\mathrm m}(u)g_{H,n}^{\mathrm m}(u)\,\dd u
&\sim \frac{n\Delta_n}{2\pi}\frac{2\eta_{\mathrm m}^2}{\sigma}J_1(\beta_{\mathrm m},\Ln)
=\frac{2\sigma^3K_{\mathrm m}^2}{\pi}\,n\Delta_n\Ln^2,
\label{eq:app-mfBm-Ish}\\
\frac{n\Delta_n}{2\pi}\int_{-1/\Delta_n}^{1/\Delta_n}\bigl(g_{H,n}^{\mathrm m}(u)\bigr)^2\,\dd u
&\sim \frac{n\Delta_n}{2\pi}\eta_{\mathrm m}^2J_2(\beta_{\mathrm m},\Ln)
=\frac{4\sigma^4K_{\mathrm m}^2}{3\pi}\,n\Delta_n\Ln^3.
\label{eq:app-mfBm-Ihh}
\end{align}
These are exactly the constants in \eqref{eq:xi-m-main}; after substituting $K_{\mathrm m}=3\sqrt{2\pi}/8$, the entries reduce to the explicit values $9\sigma^2/8$, $9\sigma^3/16$, and $3\sigma^4/8$. The point is that the leading term
of $J_1$ is $2\Ln^2$ and the leading term of $J_2$ is $(8/3)\Ln^3$, so one obtains the
critical covariance orders without any further projection.

\subsection{mfOU: critical window and the drift profile}
For the stationary mfOU model, write the sampled spectral density near the origin in the form
\[
f_{\Delta}^{\mathrm o}(\lambda;\sigma,3/4,\alpha)
=K_{\mathrm o}|\lambda|^{-1/2}\bigl(1+r_{\mathrm o,\Delta}(\lambda)\bigr),
\]
where $r_{\mathrm o,\Delta}(\lambda)\to0$ on low frequency windows. Then the same calculation as
in \eqref{eq:app-mfbm-lowfreq-1}--\eqref{eq:app-mfbm-lowfreq-2} gives, uniformly on
$1\le |u|\le A/\Delta_n$,
\begin{align}
\gamma_n f_{\Delta}^{\mathrm o}(\Delta_n u;\vartheta_{\mathrm o})
&=\eta_{\mathrm o}|u|^{-1/2}\bigl(1+o(1)\bigr),
\label{eq:app-mfou-lowfreq-1}\\
\gamma_n\,\partial_H f_{\Delta}^{\mathrm o}(\Delta_n u;\vartheta_{\mathrm o})
&=\eta_{\mathrm o}|u|^{-1/2}\bigl(2\Ln+\beta_{\mathrm o}-2\log|u|+o(1)\bigr),
\label{eq:app-mfou-lowfreq-2}
\end{align}
with $\eta_{\mathrm o}=\sigma^2K_{\mathrm o}$. Hence the $(\sigma,H)$-block has exactly the same
critical reduced profiles as in the mfBm case, after replacing $(\eta_{\mathrm m},\beta_{\mathrm m})$
by $(\eta_{\mathrm o},\beta_{\mathrm o})$. In particular,
\begin{align}
\frac{n\Delta_n}{2\pi}\int_{-1/\Delta_n}^{1/\Delta_n}\bigl(g_{\sigma}^{\mathrm o}(u)\bigr)^2\,\dd u
&\sim \frac{4\sigma^2K_{\mathrm o}^2}{\pi}\,n\Delta_n\Ln,\notag\\
\frac{n\Delta_n}{2\pi}\int_{-1/\Delta_n}^{1/\Delta_n}g_{\sigma}^{\mathrm o}(u)g_{H,n}^{\mathrm o}(u)\,\dd u
&\sim \frac{2\sigma^3K_{\mathrm o}^2}{\pi}\,n\Delta_n\Ln^2,\notag\\
\frac{n\Delta_n}{2\pi}\int_{-1/\Delta_n}^{1/\Delta_n}\bigl(g_{H,n}^{\mathrm o}(u)\bigr)^2\,\dd u
&\sim \frac{4\sigma^4K_{\mathrm o}^2}{3\pi}\,n\Delta_n\Ln^3.
\label{eq:app-mfou-block}
\end{align}
For the drift parameter, the reduced profile is
\[
g_{\alpha}^{\mathrm o}(u)=-\frac{2\alpha}{\alpha^2+u^2},
\]
so
\begin{equation}\label{eq:app-mfou-alpha}
\int_{\R}\bigl(g_{\alpha}^{\mathrm o}(u)\bigr)^2\,\dd u
=4\alpha^2\int_{\R}\frac{\dd u}{(\alpha^2+u^2)^2}
=\frac{2\pi}{\alpha}.
\end{equation}
Thus
\[
\frac{n\Delta_n}{2\pi}\int_{\R}\bigl(g_{\alpha}^{\mathrm o}(u)\bigr)^2\,\dd u
\sim \frac{1}{\alpha}\,n\Delta_n,
\]
which yields the fully explicit value $\Xi_{\alpha\alpha}^{\mathrm o}=\alpha^{-1}$. Moreover, using the tails from \eqref{eq:app-mfou-lowfreq-1}--\eqref{eq:app-mfou-lowfreq-2},
\[
|g_{\sigma}^{\mathrm o}(u)g_{\alpha}^{\mathrm o}(u)|\lesssim \frac{|u|^{-1/2}}{1+u^2},
\qquad
|g_{H,n}^{\mathrm o}(u)g_{\alpha}^{\mathrm o}(u)|\lesssim \frac{|u|^{-1/2}(\Ln+|\log|u||)}{1+u^2}.
\]
Both dominating functions are integrable on $\R$; the second one has integral $O(\Ln)$ because only the explicit prefactor $\Ln$ grows with $n$. Therefore
\[
\frac{n\Delta_n}{2\pi}\int_{\R}|g_{\sigma}^{\mathrm o}(u)g_{\alpha}^{\mathrm o}(u)|\,\dd u=O(n\Delta_n),
\qquad
\frac{n\Delta_n}{2\pi}\int_{\R}|g_{H,n}^{\mathrm o}(u)g_{\alpha}^{\mathrm o}(u)|\,\dd u=O(n\Delta_n\Ln),
\]
which is exactly the mixed-trace order stated in \eqref{eq:mfou-alpha-mixed}.

\subsection{Higher-order critical profiles and LAN remainders}\label{app:higher-orders}
The LAN proof itself only uses the first-order local perturbation $B_n^{\bullet}(h)$ and the quadratic information term, but the critical window calculus in fact controls one order deeper. The point of this subsection is to record those higher-order bounds in a form that can be reused later.

\begin{proposition}\label{prop:higher-order-derivatives}
For either model $\bullet\in\{\mathrm m,\mathrm o\}$, let
\[
N_{\beta,n}^{\bullet}(\theta)
:=
\Sigma_n^{\bullet}(\vartheta_{\bullet})^{-1/2}
\partial_{\theta}^{\beta}\Sigma_n^{\bullet}(\theta)
\Sigma_n^{\bullet}(\vartheta_{\bullet})^{-1/2},
\qquad |\beta|=2,3,
\]
and let $m(\beta)$ denote the number of $H$-derivatives appearing in $\beta$. Then, uniformly on shrinking neighborhoods of the critical point,
\begin{equation}\label{eq:app-derivative-orders}
\norm{N_{\beta,n}^{\bullet}}_{\op}=O\bigl((\log n)^{m(\beta)}\bigr),
\qquad
\tr\Bigl(\bigl(N_{\beta,n}^{\bullet}\bigr)^2\Bigr)
=O\bigl(n\Delta_n\Ln^{2m(\beta)+1}\bigr),
\qquad |\beta|=2,3.
\end{equation}
In particular, every additional $H$-derivative contributes at most one extra logarithmic factor at the operator level and two at the trace level.
\end{proposition}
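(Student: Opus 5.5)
We would prove Proposition~\ref{prop:higher-order-derivatives} by reducing every derivative matrix to a Toeplitz matrix with an explicit symbol and then reusing the critical-window calculus behind Propositions~\ref{prop:mfbm-traces} and~\ref{prop:mfou-traces}.

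\textbf{Step 1: explicit symbols (mfBm).} Write $\Sigma_n^{\mathrm m}(\theta)=\Delta_n(I_n+\sigma^2\Delta_n^{2H-1}\Toe{f_H})$. By Leibniz's rule, $\partial_\theta^\beta\Sigma_n^{\mathrm m}$ is a finite linear combination of
\[
\Delta_n\,\sigma^{2-j}\Delta_n^{2H-1}(\log\Delta_n)^{a}\,\Toe{\partial_H^{b}f_H},
\qquad j\le 2,\qquad a+b=m(\beta).
\]
Differentiating the series \eqref{eq:fgn-sd-exact} termwise, as in Proposition~\ref{prop:Km-from-sd}, gives the low-frequency bound
\[
|\partial_H^b f_H(\lambda)|\le C|\lambda|^{1-2H}\bigl(1+|\log|\lambda||\bigr)^b .
\]
This bound holds uniformly for $H$ in a small compact neighbourhood of $3/4$. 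On the window $\lambda=\Delta_n u$, each factor $\log\Delta_n$ or $\log|\lambda|$ becomes $L_n+|\log|u||$. Hence the symbol of $\partial_\theta^\beta\Sigma_n^{\mathrm m}$, divided by $\Delta_n$, is dominated by
\[
\Delta_n^{2H-3/2}\,\eta|u|^{1-2H}\bigl(L_n+|\log|u||\bigr)^{m(\beta)} .
\]

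\textbf{Step 2: operator-norm bound.} Sandwich by $\Sigma_n^{\mathrm m}(\vartheta_{\mathrm m})^{-1/2}$. Because both matrices are Toeplitz with nonnegative symbols, the operator norm of the sandwiched matrix is controlled by the sup of the ratio of the two symbols. This uses the standard Toeplitz symbol-ratio inequality from \cite{Cai2026}, which also underlies $\norm{\Dmn}_{\op}=O(\log n)$. The ratio is bounded by
\[
\frac{\gamma_n|\lambda|^{-1/2-2\epsilon_n}\bigl(1+|\log|\lambda||\bigr)^{m}}{1+\gamma_n|\lambda|^{-1/2}},
\]
where $\epsilon_n=H-3/4$ shrinks along the neighbourhood.

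\emph{Main obstacle.} The logarithm $|\log|\lambda||$ is unbounded as $\lambda\to0$, so it cannot be absorbed directly. We use that the smallest frequency resolvable by an $n\times n$ Toeplitz matrix is of order $1/n$. We would first truncate or regularise the symbol below $|\lambda|\lesssim 1/n$ and bound the excised part by a rank or Frobenius argument. That caps $|\log|\lambda||$ by $\log n$ and yields $O((\log n)^{m(\beta)})$.

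We also need the perturbation $|\lambda|^{-2\epsilon_n}$ to be harmless. On shrinking neighbourhoods we have $\epsilon_n\log n\to0$, because the local scales give $\epsilon_n=O((n\Delta_n)^{-1/2}L_n^{-3/2})$. So this factor is $1+o(1)$ uniformly, and the bound is uniform over the neighbourhood. We would make both points rigorous first, since the operator bound drives everything in Proposition~\ref{prop:critical-linearization}.

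\textbf{Step 3: trace bound.} Use the Toeplitz trace reduction exactly as in Proposition~\ref{prop:mfbm-traces}:
\[
\tr\bigl((N^{\bullet}_{\beta,n})^2\bigr)\lesssim \frac{n\Delta_n}{2\pi}\int_{-1/\Delta_n}^{1/\Delta_n} w(u)^2\bigl(L_n+|\log|u||\bigr)^{2m}\,\dd u .
\]
Split the integral at $|u|=1$. The part $|u|\le1$ is $O(L_n^{2m})$, since $w$ is bounded and $\log$-powers are integrable near $0$. The part $|u|\ge1$ equals, after $t=\log u$,
\[
\int_0^{L_n}(L_n+t)^{2m}\,\dd t=O\bigl(L_n^{2m+1}\bigr),
\]
generalising the integrals $J_0,J_1,J_2$. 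This gives $O(n\Delta_nL_n^{2m(\beta)+1})$.

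\textbf{Step 4: mfOU.} For mfOU, the sampled density is the fractional factor times a smooth OU transfer factor. Derivatives in $\alpha$ fall on the smooth factor and do not change the integrability class, as already noted in Proposition~\ref{prop:critical-linearization}. Derivatives in $\sigma$ only change constants. So the same two estimates hold with $m(\beta)$ counting only the $H$-derivatives, and the argument covers both $|\beta|=2$ and $|\beta|=3$ verbatim.
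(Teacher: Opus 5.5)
Your plan is the paper's own argument: one extra logarithm per $H$-derivative in the reduced symbol, the operator norm from the sup of the symbol ratio, the Frobenius norm from $\int_1^{1/\Delta_n}u^{-1}(1+\Ln+\abs{\log u})^{2m}\,\dd u=O(\Ln^{2m+1})$, and the same reasoning for mfOU. You are more careful than the paper on two points it passes over: the cutoff at $\abs{\lambda}\sim 1/n$, which is what actually yields $(\log n)^{m(\beta)}$ (the paper's "$L^\infty$ size of the reduced symbol" is unbounded near $u=0$), and the explicit shrinkage rate $\abs{H-3/4}\log n\to0$ (satisfied at the local scales) needed for uniformity; one small fix is that for $m(\beta)\ge1$ the symbol $g$ of $\partial_\theta^\beta\Sigma_n^\bullet$ changes sign, so you should apply the ratio inequality to $\abs{g}$ via $-\Toe{\abs{g}}\preceq\Toe{g}\preceq\Toe{\abs{g}}$.
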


\begin{proof}
At the critical point, every differentiation with respect to $H$ inserts one extra logarithmic factor into the reduced low frequency symbol, whereas $\sigma$- and $\alpha$-derivatives only modify smooth prefactors. Concretely, on the critical window $\lambda=\Delta_n u$ one has for mfBm
\[
\gamma_n\,\partial_H^k f_H\big|_{H=3/4}(\Delta_n u)
=
\eta_{\mathrm m}|u|^{-1/2}\Bigl(P_{k,\mathrm m}(2\Ln-2\log|u|)+o\bigl((1+\Ln+|\log|u||)^k\bigr)\Bigr),
\qquad k=1,2,3,
\]
where $P_{k,\mathrm m}$ is a polynomial of degree $k$ with leading term $x^k$. The same description holds for the sampled mfOU symbol, with polynomials $P_{k,\mathrm o}$ of the same degree and different lower-order coefficients. Hence every squared reduced symbol contributing to $N_{\beta,n}^{\bullet}$ is dominated by
\[
|u|^{-1}\bigl(1+\Ln+|\log|u||\bigr)^{2m(\beta)}.
\]
The Toeplitz operator norm is governed by the $L^{\infty}$ size of the reduced symbol, while the Frobenius norm is governed by its $L^2$ mass. Therefore it is enough to record the elementary logarithmic integral
\begin{equation}\label{eq:app-log-power}
\int_1^{1/\Delta_n}u^{-1}\bigl(1+\Ln+|\log u|\bigr)^{2m}\,\dd u
=O\bigl(\Ln^{2m+1}\bigr),
\qquad m\in\{0,1,2,3\},
\end{equation}
which is immediate after the change of variables $v=\log u$. The bound \eqref{eq:app-derivative-orders} follows by the same Toeplitz trace reduction as in the proof of the first-order score asymptotics.
\end{proof}

\begin{corollary}\label{cor:cubic-remainder}
Write the sandwiched local covariance expansion one order deeper as
\[
\mathcal S_n^{\bullet}(h)=B_n^{\bullet}(h)+\mathcal R_{2,n}^{\bullet}(h)+\mathcal R_{3,n}^{\bullet}(h),
\]
where $\mathcal R_{2,n}^{\bullet}(h)$ is quadratic and $\mathcal R_{3,n}^{\bullet}(h)$ is cubic in the local shift. Then, for every fixed $h$,
 \begin{align}
\norm{\mathcal R_{2,n}^{\mathrm m}(h)}_{\op}
&=
O\!\left(\frac{(\log n)^2}{n\Delta_n\,L_n^3}\right), &
\tr\Bigl(\bigl(\mathcal R_{2,n}^{\mathrm m}(h)\bigr)^2\Bigr)
&=
O\!\left((n\Delta_nL_n)^{-1}\right),
\label{eq:app-mfBm-second}\\[0.3em]
\norm{\mathcal R_{2,n}^{\mathrm o}(h)}_{\op}
&=
O\!\left(
\frac{1+(\log n)^2/L_n^3}{n\Delta_n}
\right), &
\tr\Bigl(\bigl(\mathcal R_{2,n}^{\mathrm o}(h)\bigr)^2\Bigr)
&=
O\!\left((n\Delta_n)^{-1}\right),
\label{eq:app-mfou-second}\\[0.3em]
\norm{\mathcal R_{3,n}^{\mathrm m}(h)}_{\op}
&=
O\!\left(
\frac{(\log n)^3}{(n\Delta_n)^{3/2}L_n^{9/2}}
\right), &
\tr\Bigl(\bigl(\mathcal R_{3,n}^{\mathrm m}(h)\bigr)^2\Bigr)
&=
O\!\left((n\Delta_n)^{-2}L_n^{-2}\right),
\label{eq:app-mfBm-third}\\[0.3em]
\norm{\mathcal R_{3,n}^{\mathrm o}(h)}_{\op}
&=
O\!\left(
\frac{1+(\log n)^3/L_n^{9/2}}{(n\Delta_n)^{3/2}}
\right), &
\tr\Bigl(\bigl(\mathcal R_{3,n}^{\mathrm o}(h)\bigr)^2\Bigr)
&=
O\!\left((n\Delta_n)^{-2}\right).
\label{eq:app-mfou-third}
\end{align}
Consequently the cubic local covariance contribution is already negligible before the matrix Taylor expansion for the log-determinant and the inverse is applied.
\end{corollary}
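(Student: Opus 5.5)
Write the local shift as $\delta_n=r_{n,\bullet}^{-1}h$. Its components satisfy
\[
\delta_{\sigma,n}=O\bigl((n\Delta_n\Ln)^{-1/2}\bigr)\ \text{(including the $\sigma\Ln$ coupling entry, which is also of this order)},\qquad
\delta_{H,n}=O\bigl((n\Delta_n)^{-1/2}\Ln^{-3/2}\bigr),\qquad
\delta_{\alpha,n}=O\bigl((n\Delta_n)^{-1/2}\bigr).
\]
The plan is to Taylor expand $\Sigma_n^\bullet(\vartheta_\bullet+\delta_n)$ to third order with integral remainder, sandwich by $\Sigma_n^\bullet(\vartheta_\bullet)^{-1/2}$, and write
\[
\mathcal R_{2,n}^\bullet(h)=\tfrac12\sum_{|\beta|=2}\binom{2}{\beta}\delta_n^\beta N_{\beta,n}^\bullet(\vartheta_\bullet),\qquad
\mathcal R_{3,n}^\bullet(h)=\sum_{|\beta|=3}c_\beta\,\delta_n^\beta\int_0^1(1-t)^2N_{\beta,n}^\bullet(\vartheta_\bullet+t\delta_n)\,\dd t .
\]
Because Proposition~\ref{prop:higher-order-derivatives} holds uniformly on shrinking neighborhoods and $\delta_n\to0$, the cubic integrand can be bounded uniformly in $t$. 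So $\mathcal R_{3,n}^\bullet$ is controlled by the same bounds as the evaluated derivatives.

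Next, bound each term by the triangle inequality. This works both for $\norm{\cdot}_{\op}$ and for the Frobenius norm, with $\tr(X^2)=\norm{X}_\F^2$ and $(\sum_j a_j)^2\le C\sum_j a_j^2$. For a multi-index $\beta$ with $m=m(\beta)$ $H$-derivatives, Proposition~\ref{prop:higher-order-derivatives} gives the operator contribution
\[
|\delta_n^\beta|\,(\log n)^m,
\]
and the squared Frobenius contribution
\[
|\delta_n^\beta|^2\, n\Delta_n\Ln^{2m+1}.
\]
In mfBm each $H$-factor of $\delta$ carries an extra $\Ln^{-1}$ relative to a $\sigma$-factor. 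Hence the Frobenius contribution is $(n\Delta_n)^{-|\beta|}\Ln^{-|\beta|}\cdot n\Delta_n\Ln^{-2m}\Ln^{2m+1}$ up to constants. The log powers cancel exactly, leaving $(n\Delta_n)^{1-|\beta|}\Ln^{1-|\beta|}$. This is $(n\Delta_n\Ln)^{-1}$ for $|\beta|=2$ and $(n\Delta_n\Ln)^{-2}$ for $|\beta|=3$, which is at least as good as the stated bounds.

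On the operator side the worst case is pure $H$. For $|\beta|=2$ it gives $(\log n)^2/(n\Delta_n\Ln^3)$. For $|\beta|=3$ it gives $(\log n)^3/((n\Delta_n)^{3/2}\Ln^{9/2})$. The mixed $\sigma$–$H$ terms are dominated by these, using $\log n\ge \Ln$ (since $n\Delta_n\to\infty$).

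For mfOU the $\alpha$-factors carry no $\Ln^{-1/2}$ gain, which explains the extra "$1+$" and the loss of the $\Ln^{-1}$ factor in the trace bounds. For pure $\alpha$ or mixed $\sigma\alpha$ derivatives, $m=0$ and the operator contribution is $O((n\Delta_n)^{-|\beta|/2})$. For terms with both $H$ and $\alpha$, the bound is bounded by the pure-$H$ or pure-$\alpha$ extremes via AM–GM on the log powers. The trace bounds are then at most $(n\Delta_n)^{1-|\beta|}$ times a nonpositive power of $\Ln$.

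The "consequently" statement follows because all four $\mathcal R_{3,n}$ bounds tend to zero. The operator bounds vanish because $\log n/\Ln^{3/2}\le 1/\Ln^{1/2}+\log(n\Delta_n)/\Ln^{3/2}$, and this is multiplied by $(n\Delta_n)^{-1/2}$, exactly as in the proof of Theorem~\ref{thm:mfbm-main}. The trace bounds vanish directly. Thus $\mathcal R_{3,n}$ does not affect $\norm{\mathcal S_n}_{\op}\to0$ or $\tr(\mathcal S_n^2)=\tr(B_n^2)+o(1)$.

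The main obstacle is the trace bookkeeping in mfOU. There the $\alpha$-scale has no logarithmic gain, and one must check that the $H\alpha$ cross terms, with Frobenius order $n\Delta_n\Ln^{3}$ from $m=1$, are compensated by $\delta_H^2$. This reduces to the arithmetic $\Ln^{-3}\Ln^{3}=1$, so it is fine. The only genuine input beyond arithmetic is the uniformity in Proposition~\ref{prop:higher-order-derivatives} needed for the integral remainder; that is why the statement is phrased via shrinking neighborhoods.
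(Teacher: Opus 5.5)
Your bookkeeping is the same as the paper's: you multiply the shift orders against the derivative-matrix bounds monomial by monomial, and handle the cubic term by uniformity on shrinking neighborhoods. It reproduces every mfBm bound and all the mfOU operator bounds. The gap is in the mfOU trace bounds. If you use Proposition~\ref{prop:higher-order-derivatives} as stated, a monomial with $k$ $\sigma$-, $m$ $H$- and $j$ $\alpha$-derivatives contributes
\[
|\delta_n^\beta|^2\,n\Delta_n\Ln^{2m+1}=(n\Delta_n)^{1-|\beta|}\Ln^{1-k-m}.
\]
So your claim that the power of $\Ln$ is nonpositive fails exactly for the pure drift monomials, $k=m=0$. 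The $(\alpha,\alpha)$ term gives $\Ln/(n\Delta_n)$ and the $(\alpha,\alpha,\alpha)$ term gives $\Ln/(n\Delta_n)^2$, each a factor $\Ln$ above \eqref{eq:app-mfou-second}--\eqref{eq:app-mfou-third}. Your ``main obstacle'' check only examines the $H\alpha$ cross terms, which are harmless; the problem is the terms with no $\sigma$ or $H$ factor. This is not cosmetic, because under $n\Delta_n\to\infty$ alone these quantities need not vanish. With $n\Delta_n=\log\log n$ one has $\Ln\sim\log n$, so $\Ln/(n\Delta_n)^2\to\infty$, and the sentence ``the trace bounds vanish directly'' is not justified for mfOU.

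The missing ingredient is a sharper trace order for derivative matrices involving $\alpha$. An $\alpha$-derivative falls on the OU transfer factor and multiplies the reduced symbol by something decaying like $|u|^{-2}$ (compare $g_\alpha^{\mathrm o}$). The squared symbol is then integrable on $\R$, so the divergence $\int_1^{1/\Delta_n}u^{-1}\,\dd u=\Ln$ behind the ``$+1$'' in Proposition~\ref{prop:higher-order-derivatives} does not occur. Hence $\tr\bigl((N_{\beta,n}^{\mathrm o})^2\bigr)=O(n\Delta_n\Ln^{2m(\beta)})$ whenever $\beta$ contains an $\alpha$. This is what the paper's proof actually uses. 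It takes the $(\sigma,\alpha)$, $(H,\alpha)$ and $(\alpha,\alpha)$ squared-trace orders from the refined list in the proof of Proposition~\ref{prop:critical-linearization}:
\[
\tr\bigl((M^{\mathrm o}_{\sigma\alpha,n})^2\bigr)=O(n\Delta_n),\qquad
\tr\bigl((M^{\mathrm o}_{H\alpha,n})^2\bigr)=O(n\Delta_n\Ln^2),\qquad
\tr\bigl((M^{\mathrm o}_{\alpha\alpha,n})^2\bigr)=O(n\Delta_n).
\]
These give contributions $(n\Delta_n\Ln)^{-1}$, $(n\Delta_n\Ln)^{-1}$ and $(n\Delta_n)^{-1}$, and in the cubic case the pure-$\alpha$ monomial gives exactly $(n\Delta_n)^{-2}$. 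You need this refinement stated for second and third derivatives, uniformly along the segment. Your formula then becomes $(n\Delta_n)^{1-|\beta|}\Ln^{-k-m}$ for every $\alpha$-containing $\beta$. The rest of your argument then goes through, including the AM--GM step $\log n/\Ln^{3/2}\le\frac12\bigl(1+(\log n)^2/\Ln^3\bigr)$ for the operator bounds and the integral form of the cubic remainder.
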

\begin{proof}
The local shifts satisfy
\[
\delta_{\sigma,n}=O\bigl((n\Delta_n\Ln)^{-1/2}\bigr),
\qquad
\delta_{H,n}=O\bigl((n\Delta_n)^{-1/2}\Ln^{-3/2}\bigr),
\qquad
\delta_{\alpha,n}=O\bigl((n\Delta_n)^{-1/2}\bigr).
\]
For mfBm, the quadratic remainder contains only the three terms
\[
(\delta_{\sigma,n})^2N_{(\sigma\sigma),n}^{\mathrm m},
\qquad
\delta_{\sigma,n}\delta_{H,n}N_{(\sigma H),n}^{\mathrm m},
\qquad
(\delta_{H,n})^2N_{(HH),n}^{\mathrm m},
\]
and proposition \ref{prop:higher-order-derivatives} shows that these three terms are respectively of operator orders
\[
O\bigl((n\Delta_n\Ln)^{-1}\bigr),\qquad
O\!\left(\frac{\log n}{n\Delta_n\Ln^{2}}\right),\qquad
O\!\left(\frac{(\log n)^2}{n\Delta_n\Ln^{3}}\right),
\]
while all three have squared-trace order \(O\bigl((n\Delta_n\Ln)^{-1}\bigr)\). This yields \eqref{eq:app-mfBm-second}. For mfOU the additional \((\sigma,\alpha)\), \((H,\alpha)\) and \((\alpha,\alpha)\) terms have operator orders
\[
O\!\left(\frac{1}{n\Delta_n\Ln^{1/2}}\right),\qquad
O\!\left(\frac{\log n}{n\Delta_n\Ln^{3/2}}\right),\qquad
O\bigl((n\Delta_n)^{-1}\bigr),
\]
and squared-trace orders
\[
O\bigl((n\Delta_n\Ln)^{-1}\bigr),\qquad
O\bigl((n\Delta_n\Ln)^{-1}\bigr),\qquad
O\bigl((n\Delta_n)^{-1}\bigr),
\]
respectively; hence \eqref{eq:app-mfou-second} follows.

For the cubic remainder, every monomial contains three local shifts and a third derivative. In mfBm the worst case is any monomial with total \(m(\beta)=3\) after multiplying by \((\delta_{H,n})^3\), and all such contributions are bounded by
\[
(n\Delta_n)^{-3/2}\Ln^{-9/2}\cdot (\log n)^3
\]
at the operator level. Squaring and multiplying by the trace order \(n\Delta_n\Ln^{2\cdot 3+1}\) gives \eqref{eq:app-mfBm-third}. The mfOU bound is identical, except that the pure \(\alpha\)-terms dominate and remove the negative logarithmic power, which yields \eqref{eq:app-mfou-third}. Therefore all terms beyond the quadratic LAN term are absorbed into the same \(o_P(1)\) remainder.
\end{proof}

\section{Elementary logarithmic integrals}
For quick reference, the three elementary logarithmic identities used repeatedly throughout the critical analysis are
\begin{align*}
\int_1^{1/\Delta_n}u^{-1}\,\dd u &= \Ln,\\
\int_1^{1/\Delta_n}u^{-1}\log u\,\dd u &= \frac12\Ln^2,\\
\int_1^{1/\Delta_n}u^{-1}\log^2u\,\dd u &= \frac13\Ln^3.
\end{align*}
They are the sole source of the critical orders $n\Delta_n\Ln$, $n\Delta_n\Ln^2$, and
$n\Delta_n\Ln^3$ appearing in the score covariance and in the LAN quadratic term.
\section*{Data availability statement}
The Monte Carlo evidence is generated from the designs described in Section~\ref{sec:simulation}. The empirical section in Section~\ref{sec:empirical} uses the uploaded one-minute SPY data file \texttt{1\_min\_SPY\_2008-2021.csv}. The bundled files therefore contain the cleaned regular-session daily return panels, the exact feasible-score outputs, and the figures and tables reported in the empirical section. A fuller replication archive would add broader cross-asset intraday samples and parallel scripts for the mfOU side.

\end{document}